\theoremstyle{plain}
\newtheorem{theorem}{Theorem}[section]
\newtheorem{proposition}[theorem]{Proposition}
\newtheorem{lemma}[theorem]{Lemma}
\newtheorem{corollary}[theorem]{Corollary}
\theoremstyle{definition}
\newtheorem{definition}[theorem]{Definition}
\newcommand{\verteq}{\rotatebox{90}{$\,=$}}
\newcommand{\equalto}[2]{\underset{\displaystyle\overset{\mkern4mu\verteq}{#2}}{#1}}
\newcommand{\lb}{\llbracket}
\newcommand{\rb}{\rrbracket}
\newcommand{\tp}{{\scriptscriptstyle\mathsf{T}}}
\newcommand{\h}{{\scriptscriptstyle\mathsf{H}}}
\newcommand{\G}{{\scriptscriptstyle\mathsf{G}}}
\newcommand{\La}{{\scriptscriptstyle\mathsf{L}}}
\newcommand{\J}{{\scriptscriptstyle\mathsf{J}}}
\newcommand{\C}{{\scriptscriptstyle\mathsf{C}}}
\newcommand{\p}{{\scriptscriptstyle+}}
\newcommand{\x}{{\scriptscriptstyle\times}}
\newcommand{\s}{{\scriptscriptstyle\mathsf{S}}}
\newcommand{\la}{{\scriptscriptstyle\mathsf{L}}}
\let\O\undefined
\DeclareMathOperator{\O}{O}
\DeclareMathOperator{\U}{U}
\DeclareMathOperator{\V}{V}
\DeclareMathOperator{\Sp}{Sp}
\DeclareMathOperator{\SO}{SO}
\DeclareMathOperator{\tr}{tr}
\DeclareMathOperator{\Gr}{Gr}
\DeclareMathOperator{\LGr}{LGr}
\DeclareMathOperator{\diag}{diag}
\DeclareMathOperator{\spn}{span}
\DeclareMathOperator{\Flag}{Flag}
\DeclareMathOperator{\goe}{GOE}
\DeclareMathOperator{\gue}{GUE}
\DeclareMathOperator{\gse}{GSE}
\DeclareMathOperator{\unif}{\textsc{Unif}}
\DeclareMathOperator{\cre}{CRE}
\DeclareMathOperator{\cue}{CUE}
\DeclareMathOperator{\cqe}{CQE}
\DeclareMathOperator{\coe}{COE}
\DeclareMathOperator{\cle}{CLE}
\DeclareMathOperator{\cse}{CSE}
\DeclareMathOperator{\gio}{\textsc{GinOE}}
\DeclareMathOperator{\giu}{\textsc{GinUE}}
\DeclareMathOperator{\gis}{\textsc{GinSE}}
\DeclareMathOperator{\joe}{JOE}
\DeclareMathOperator{\loe}{LOE}
\DeclareMathOperator{\jue}{JUE}
\DeclareMathOperator{\lue}{LUE}
\DeclareMathOperator{\jse}{JSE}
\DeclareMathOperator{\lse}{LSE}
\DeclareMathOperator{\nor}{N}
\let\latexcirc=\circ
\newcommand{\ccirc}{\mathbin{\mathchoice
  {\xcirc\scriptstyle}
  {\xcirc\scriptstyle}
  {\xcirc\scriptscriptstyle}
  {\xcirc\scriptscriptstyle}
}}
\newcommand{\xcirc}[1]{\vcenter{\hbox{$#1\latexcirc$}}}
\let\circ\ccirc
\begin{document}
\title[Random Eigen, singular, cosine-sine, and Autonne--Takagi vectors]{Eigen, singular, cosine-sine, and Autonne--Takagi vectors distributions of random matrix ensembles}
\author[Y.~Guo]{Yihan~Guo}
\address{Computational and Applied Mathematics Initiative, Department of Statistics,
University of Chicago, Chicago, IL 60637-1514.}
\email{yihanguo@uchicago.edu}
\author[L.-H.~Lim]{Lek-Heng~Lim}
\address{Computational and Applied Mathematics Initiative, Department of Statistics,
University of Chicago, Chicago, IL 60637-1514.}
\email{lekheng@uchicago.edu}

\begin{abstract}
We show that some of the best-known matrix decompositions of some of the best-known random matrix ensembles give us the unique $G$-invariant uniform distributions on some of the best-known manifolds. The eigenvectors distributions of the Gaussian, Laguerre, and Jacobi ensembles are all given by the uniform distribution on the complete flag manifold. The singular vectors distributions of Ginibre ensembles are given by the uniform distribution on a product of the complete flag manifold with a Stiefel manifold. Circular ensembles split into two types: The cosine-sine vectors distributions of circular real, unitary, and quaternionic ensembles are given by the uniform distributions on products of a (partial) flag manifold with copies of the orthogonal, unitary, or compact symplectic groups. The Autonne--Takagi vectors distributions of circular orthogonal, Lagrangian, and symplectic ensembles are given by the uniform distributions on Lagrangian Grassmannians.
\end{abstract}

\maketitle

\section{Introduction}\label{sec:intro}
A flag manifold $\Flag(k_1,\dots,k_p, \mathbb{R}^n)$ is the set of all $(k_1,\dots,k_p)$-flags, i.e., nested sequences of subspaces of dimensions $k_1 < \dots < k_p$, in $\mathbb{R}^n$. It has two noteworthy boundary cases: The $p = 1$ case is the Grassmannian or Grassmann manifold, the set of all $k$-planes, i.e., $k$-dimensional subspaces, in $\mathbb{R}^n$. The $p = n-1$ case is the \emph{complete flag manifold}  where $k_1 = 1$, $k_2 = 2, \dots, k_{n-1} =  n-1$. These are given special notations:
\[
\Gr(k, \mathbb{R}^n) \coloneqq \Flag(k, \mathbb{R}^n), \qquad \Flag(\mathbb{R}^n) \coloneqq \Flag(1,2,\dots,n-1,\mathbb{R}^n).
\]
Closely related is the Stiefel manifold $\V(k, \mathbb{R}^n)$ of $k$-frames, i.e., ordered orthonormal basis of $k$-planes, in $\mathbb{R}^n$, which will also play a role in this article.

The set-theoretic description above defines the flag manifold as an abstract manifold. However, one may also characterize it as a homogeneous space:
\[
\Flag(k_1,\dots,k_p,\mathbb{R}^n) \cong \O(n)/\bigl(\O(k_1)\times\O(k_2-k_1)\times\dots\times\O(k_p-k_{p-1})\times\O(n-k_p)\bigr),
\]
or as an embedded submanifold:
\begin{equation}\label{eq:flag}
\Flag(k_1,\dots,k_p,\mathbb{R}^n) \cong \{Q\Delta_aQ^\tp \in \mathsf{S}^2(\mathbb{R}^n) :Q\in\O(n)\}.
\end{equation}
Here $\Delta_a  \coloneqq \diag(a_1I_{k_1-k_0},a_2I_{k_2-k_1},\dots,a_{p+1}I_{k_{p+1}-k_p})$ is a diagonal matrix with arbitrary, fixed, and distinct $a_1, a_2,\dots,a_{p+1} \in \mathbb{R}$ appearing with multiplicities $k_1-k_0,k_2-k_1,\dots,k_{p+1} - k_p$ where we set $k_0 \coloneqq 0$ and $k_{p+1} \coloneqq n$. We denote the space of $n \times n$ real symmetric matrices by $\mathsf{S}^2(\mathbb{R}^n)$.
The homogeneous space structure endows $\Flag(k_1,\dots,k_p,\mathbb{R}^n)$ with a unique $\O(n)$-invariant probability measure (see Section~\ref{sec:back}). The submanifold structure allows points on $\Flag(k_1,\dots,k_p,\mathbb{R}^n)$ to be uniquely represented as actual matrices (as opposed to equivalence classes of matrices).

A world apart from the flag manifold is the Gaussian Orthogonal Ensemble (GOE), a well-known probability distribution on $\mathsf{S}^2(\mathbb{R}^n)$ with density
\[
f_{\goe(n)}(X)=2^{-\frac{n}{2}}\pi^{-\frac{n(n+1)}{4}}e^{-\frac{1}{2}\tr(X^2)}.
\]
Write $X = Q \Lambda Q^\tp$ with $Q \in \O(n)$ and $\Lambda \coloneqq \diag(\lambda_1, \dots,\lambda_n)$ for an eigenvalue decomposition of $X \in  \mathsf{S}^2(\mathbb{R}^n)$ with eigenvalues ordered $\lambda_1 > \dots > \lambda_n$ (or any other fixed prescribed order). For $X \sim \goe(n)$, both $\Lambda$ and $Q$ are random matrices. The distribution of $\Lambda$ has been thoroughly studied. The distribution of $Q$ has however received much less attention. A common belief is that there is not much to study --- the latter is simply the uniform distribution on $\O(n)$, its normalized Haar measure, see \cite[p.~1623]{inaccurate2} and \cite[p.~289]{inaccurate1} for example. This is not quite right. An indication that it is inaccurate is that $Q$ is evidently not unique, even when the eigenvalues are distinct and ordered, and all eigenvectors normalized to have unit norm. One may enforce uniqueness on the matrix of eigenvectors by further requiring that the first row of $Q$  have all entries positive, as was done in \cite[p.~367]{accurate1}; but this is a coordinate-dependent remedy, and describes the GOE eigenvectors distribution on a strictly smaller subset $\{ Q \in \O(n) : q_{1j} > 0, \; j =1,\dots,n\}$ that is not even a subgroup.\footnote{This issue was also raised in \url{https://mathoverflow.net/a/413187} but with no resolution. Our work here provides an answer: The eigenvectors should be regarded as points on a flag manifold, not the unitary group.}

We will show in this article that the space underlying this distribution is in fact the complete flag manifold $\Flag(\mathbb{R}^n)$, and the GOE eigenvectors distribution its unique $\O(n)$-invariant uniform probability distribution. The treatment in \cite[p.~4]{univpropvec} gets close to this picture but stops short of mentioning the complete flag manifold. 
We highlight two departures from existing works:
\begin{enumerate}[{label=\upshape(\roman*)}]
\item The issue above does not happen to the widely known QR decomposition of a random matrix from the Ginibre Ensemble \cite{GinibreQR}, i.e., an $n \times n$ matrix of i.i.d.\ standard normal variates. In this case the distribution of the $Q$-factor is indeed the uniform distribution on $\O(n)$. The reason is that the $Q$-factor in a QR decomposition $A = QR$ of a generic matrix $A$ is unique as long as we restrict $R$ to have all diagonal entries positive (or negative). On the other hand, the $Q$-factor in an eigenvalue decomposition $X = Q \Lambda Q^\tp$ of a generic symmetric matrix $X$ cannot be unique no matter what restriction we impose on $\Lambda$. The typical way to ensure uniqueness is to impose restrictions on $Q$ itself; but once we do this, as we saw, we will no longer obtain a distribution defined on all of $\O(n)$.

\item\label{it:two} Our plural nomenclature, i.e., \emph{eigenvectors} distribution, is deliberate. Existing studies about the \emph{eigenvector} distribution of GOE, or, more generally, of Wigner matrices, are limited to one eigenvector, i.e., a column of $Q$ \cite{univpropvec2,univpropvec}, or even one entry of $Q$ \cite{univpropvec}. Here we study the distribution of the eigenbasis, i.e., the complete set of eigenvectors assembled in a matrix $Q$. It turns out that it is easier to study the whole $Q$ than it is to study its column vectors or its entries in isolation. By using the characterization of flag manifold as a set of matrices in \eqref{eq:flag}, the matrix of eigenvectors gives a well-defined map $\mathsf{S}^2(\mathbb{R}^n)  \to \Flag(\mathbb{R}^n)$ that takes $\goe(n)$ on the former to the uniform distribution on the latter. Indeed, we will go further to show that one may obtain the unique $\O(n)$-invariant uniform probability distribution on \emph{any} flag manifold $\Flag(k_1,\dots,k_p, \mathbb{R}^n)$ in much the same way.
\end{enumerate}

Unsurprisingly, we will see that \ref{it:two} applies nearly verbatim when $\mathbb{R}$ is replaced by $\mathbb{C}$ or $\mathbb{H}$: The eigenvectors distribution of the Gaussian Unitary Ensemble (GUE) yields the $\U(n)$-invariant uniform probability distribution on any complex flag manifold $\Flag(k_1,\dots,k_p, \mathbb{C}^n)$. Likewise the eigenvectors distribution of the Gaussian Symplectic Ensemble (GSE)  yields the $\Sp(n)$-invariant uniform probability distribution on any quaternionic flag manifold $\Flag(k_1,\dots,k_p, \mathbb{H}^n)$. These will be discussed alongside \ref{it:two} in Section~\ref{sec:gauss} and then extended to the Laguerre and Jacobi ensembles.

In Section~\ref{sec:gini}, we will similarly study the singular vectors distributions of Ginibre ensembles over $\mathbb{R}$, $\mathbb{C}$, and $\mathbb{H}$, known in the literature by the names Ginibre Orthogonal Ensemble (\textsc{GinOE}), Ginibre Unitary Ensemble (\textsc{GinUE}), and Ginibre Symplectic Ensemble (\textsc{GinSE}) respectively. These would also yield the uniform distributions on the real, complex, and quaternionic Stiefel manifolds as a side product.

We conclude with two sections looking into two kinds of ensembles of quite distinct nature but both falling under the heading of ``circular ensembles.'' In Section~\ref{sec:circ}, we discuss the cosine-sine vectors distributions of the Circular Real Ensemble (CRE), Circular Unitary Ensemble (CUE), and Circular Quaternion Ensemble (CQE). In Section~\ref{sec:circ2}, we discuss the Autonne--Takagi vectors distributions of the Circular Orthogonal Ensemble (COE), Circular Lagrangian Ensemble (CLE),  and Circular Symplectic Ensemble (CSE). What we call ``Circular Lagrangian Ensemble'' does not currently have a name in the existing literature; we will see in Section~\ref{sec:circ2} why this is a fitting name.

We summarize our findings in Table~\ref{tab:sum}. For the uninitiated, all aforementioned terms will be properly defined in Section~\ref{sec:back}, complete with pointers to the relevant references.
\begin{table}[h]
\centering
\footnotesize
\caption{Eigen, singular, and cosine-sine vectors distributions of random matrix ensembles}
\label{tab:sum}
\renewcommand{\arraystretch}{1.3}
\begin{tabular}{l|l|l}
\textsc{ensemble} & \textsc{object} & \textsc{distribution} \\
\hline
$\goe(n)$/$\loe(m,n)$/$\joe(l,m,n)$ & eigenvectors & $\unif\bigl(\Flag(\mathbb{R}^n)\bigr)$ \\
$\gue(n)$/$\lue(m,n)$/$\jue(l,m,n)$ & eigenvectors & $\unif\bigl(\Flag(\mathbb{C}^n)\bigr)$ \\
$\gse(n)$/$\lse(m,n)$/$\jse(l,m,n)$ & eigenvectors & $\unif\bigl(\Flag(\mathbb{H}^n)\bigr)$\\
$\gio(m,n)$ & singular vectors & $\unif\bigl(\V(n,\mathbb{R}^m)\times\Flag(\mathbb{R}^n)\bigr)$ \\
$\giu(m,n)$ & singular vectors & $\unif\bigl(\V(n,\mathbb{C}^m)\times\Flag(\mathbb{C}^n)\bigr)$\\
$\gis(m,n)$ & singular vectors & $\unif\bigl(\V(n,\mathbb{H}^m)\times\Flag(\mathbb{H}^n)\bigr)$\\
$\cre(n)$ & cosine-sine vectors & $\unif\bigl(\O(k)^2\times\O(n-k)\times\Flag(1,\dots,k,\mathbb{R}^{n-k})\bigr)$\\
$\cue(n)$ & cosine-sine vectors& $\unif\bigl(\U(k)^2\times\U(n-k)\times\Flag(1,\dots,k,\mathbb{C}^{n-k})\bigr)$ \\
$\cqe(n)$ & cosine-sine vectors & $\unif\bigl(\Sp(k)^2\times\Sp(n-k)\times\Flag(1,\dots,k,\mathbb{H}^{n-k})\bigr)$\\
$\coe(n)$ & Autonne--Takagi vectors & $\unif\bigl(\LGr(\mathbb{R}^{2n})\bigr)$\\
$\cle(n)$ & Autonne--Takagi vectors & $\unif\bigl(\LGr(\mathbb{C}^{2n})\bigr)$\\
$\cse(n)$ & Autonne--Takagi vectors & $\unif\bigl(\LGr(\mathbb{H}^{2n})\bigr)$
\end{tabular}
\end{table}

There is a very well-known differential geometric angle of random matrix theory, namely, the (almost) one-to-one correspondence between symmetric spaces and classical random matrix ensembles \cite{EJ} dating back to Dyson \cite{Dyson}. We emphasize that this bears little relation to our work here. Apart from the Lagrangian Grassmannians, none of the other manifolds in Table~\ref{tab:sum} are symmetric spaces.

\section{Matrices, manifolds, and ensembles}\label{sec:back}

For easy reference, and to set notations and nomenclature, we will go over some essential background materials. While most of these are not new, they are not a mere replica of existing literature and cannot be found in any single source. There is perhaps some value in tabulating them here.

\subsection{Quaternions}

We treat quaternions in the standard way as $x=a+b\mathrm{i}+c\mathrm{j}+d\mathrm{k}$ with $a,b,c,d\in\mathbb{R}$ and $\mathrm{i},\mathrm{j},\mathrm{k}$ denoting the imaginary units. As a reminder, multiplication is given by $(a_1+b_1\mathrm{i}+c_1\mathrm{j}+d_1\mathrm{k})(a_2+b_2\mathrm{i}+c_2\mathrm{j}+d_2\mathrm{k})=(a_1a_2-b_1b_2-c_1c_2-d_1d_2)+(a_1b_2+b_1a_2+c_1d_2-d_1c_2)\mathrm{i}+(a_1c_2-b_1d_2+c_1a_2+d_1b_2)\mathrm{j}+(a_1d_2+b_1c_2-c_1b_2+d_1a_2)\mathrm{k}$; the conjugation of $x=a+b\mathrm{i}+c\mathrm{j}+d\mathrm{k}$ is defined as $\Bar{x}=a-b\mathrm{i}-c\mathrm{j}-d\mathrm{k}$; and modulus as $\lvert x \rvert \coloneqq \sqrt{x \Bar{x}}$. We also regard $\mathbb{R}\subset\mathbb{C}\subset\mathbb{H}$ in the usual way, i.e., as an inclusion of real $*$-subalgebras.

\subsection{Matrices}\label{sec:mat}

We write $\mathbb{R}^{m \times n}$, $\mathbb{C}^{m \times n}$, $\mathbb{H}^{m \times n}$ for the set of $m \times n$ matrices over the respective field/skew field. We denote transpose by $X^\tp$ and conjugate transpose by $X^\h$. Since $\mathbb{R}\subset\mathbb{C}\subset\mathbb{H}$, there is no ambiguity in writing $X^\h$ regardless of whether $X\in\mathbb{C}^{m \times n}$ or $X\in\mathbb{H}^{m \times n}$. While $X^\h = X^\tp$ for $X\in\mathbb{R}^{m \times n}$, we need both notations as we will have to discuss complex symmetric matrices as well.

We denote the \emph{real} vector spaces of real symmetric matrices, complex Hermitian matrices, and quaternionic self-dual matrices respectively as in the left column below:
\begin{equation}\label{eq:basic}
\begin{aligned}
\mathsf{S}^2(\mathbb{R}^n) &=\{X\in\mathbb{R}^{n\times n}: X=X^\tp\},\\
\mathsf{H}^2(\mathbb{C}^n) &=\{X\in\mathbb{C}^{n\times n}: X=X^\h\},\\
\mathsf{Q}^2(\mathbb{H}^n) &=\{X\in \mathbb{H}^{n\times n}:X=X^\h\},
\end{aligned}
\qquad
\begin{aligned}
\O(n) &=\{Q\in\mathbb{R}^{n\times n}: Q^\tp Q = I \},\\
\U(n) &=\{Q\in\mathbb{C}^{n\times n}: Q^\h Q = I\},\\
\Sp(n) &=\{Q\in\mathbb{H}^{n\times n}: Q^\h Q = I\},
\end{aligned}
\end{equation}
and the \emph{real} algebraic groups of orthogonal, unitary, and quaternionic unitary matrices as in the right column above.  Note that here $\Sp(n) \cong \U(2n) \cap \Sp(2n, \mathbb{C})$ is the compact symplectic group as opposed to the noncompact one $\Sp(2n, \mathbb{C})$.

We define two special matrices
\begin{equation}\label{eq:J}
 J_{2n} \coloneqq \begin{bmatrix}
    0&-I_n\\
    I_n &0
\end{bmatrix} \in \mathbb{R}^{2n \times 2n}\quad\text{and} \quad I_{m,n} \coloneqq \begin{bmatrix}
    I_m & 0\\
    0 & -I_n
\end{bmatrix} \in \mathbb{R}^{(m+n) \times (m + n)}.
\end{equation}
For $X \in \mathbb{C}^{2n \times 2n}$, we define its symplectic adjoint and Lagrangian adjoint by
\begin{equation}\label{eq:sa}
X^\s \coloneqq -J_{2n} X^\tp J_{2n} \quad\text{and}\quad X^\la\coloneqq I_{n,n} X^\h I_{n,n}
\end{equation}
respectively.
The vector spaces of complex symmetric matrices, Lagrangian symmetric matrices, and symplectically symmetric matrices are defined by
\begin{equation}\label{eq:basic2}
\begin{aligned}
\mathsf{S}^2(\mathbb{C}^n) &\coloneqq \{X\in\mathbb{C}^{n\times n}: X=X^\tp\}, \\
\mathsf{V}^2(\mathbb{C}^{2n}) &\coloneqq \{X\in\mathbb{C}^{2n\times 2n}: X=X^\la\},\\
\mathsf{Y}^2(\mathbb{C}^{2n}) &\coloneqq \{X\in\mathbb{C}^{2n\times 2n}: X=X^\s\}
\end{aligned}
\end{equation}
respectively. Observe that $\mathsf{S}^2(\mathbb{C}^n)$ and $\mathsf{Y}^2(\mathbb{C}^{2n})$ are complex vector spaces but $\mathsf{V}^2(\mathbb{C}^{2n})$ is only a real vector space.

A delightful fact about quaternionic matrix theory is that the eigenvalue, singular value, and cosine-sine decompositions can be extended almost verbatim to matrices with quaternionic entries \cite[Theorems~1.4.10, 1.4.11, 1.4.12]{QCS}. We present these alongside their well-known real and complex counterparts in Table~\ref{tab:decomp} for easy comparison. 
\begin{table}[h]
\centering
\small
\caption{Matrix decompositions with orderings of eigenvalues $\lambda_1 \ge \dots \ge \lambda_n$, singular values $\sigma_1 \ge \dots \ge \sigma_{\min(m,n)}$, cosine-sine values $c_1 \ge \dots \ge c_p$, $s_1 \le \dots \le s_p$.}
\label{tab:decomp}
\renewcommand{\arraystretch}{1.3}
\begin{tabular}{l@{\hspace{3\tabcolsep}}l@{\hspace{3\tabcolsep}}l@{\hspace{.5\tabcolsep}}l}
\multicolumn{4}{c}{\textsc{eigenvalue decomposition}} \\
$X\in\mathsf{S}^2(\mathbb{R}^n)$ & $X=Q\Lambda Q^\tp$ & $Q\in\O(n)$, & $\Lambda = \diag(\lambda_1,\dots,\lambda_n) \in \mathbb{R}^{n \times n}$ \\
$X\in\mathsf{H}^2(\mathbb{C}^n)$ & $X=Q\Lambda Q^\h$ & $Q\in\U(n)$, & $\Lambda = \diag(\lambda_1,\dots,\lambda_n) \in \mathbb{R}^{n \times n}$ \\
$X\in\mathsf{Q}^2(\mathbb{H}^n)$ & $X=Q\Lambda Q^\h$ & $Q\in\Sp(n)$, &  $\Lambda = \diag(\lambda_1,\dots,\lambda_n) \in \mathbb{R}^{n \times n}$
\end{tabular}

\begin{tabular}{l@{\hspace{3\tabcolsep}}l@{\hspace{3\tabcolsep}}l@{\hspace{.5\tabcolsep}}l@{\hspace{.5\tabcolsep}}l}
\multicolumn{5}{c}{\textsc{singular value decomposition}} \\
$Y\in\mathbb{R}^{m \times n}$ & $Y=U\Lambda V^\tp$ & $U\in\O(m)$, & $V\in\O(n)$, & $\Sigma = \diag(\sigma_1,\dots,\sigma_{\min(m,n)}) \in \mathbb{R}^{m \times n}_\p$ \\
$Y\in\mathbb{C}^{m \times n}$ & $Y=U\Lambda V^\h$ & $U\in\U(m)$, & $V\in\U(n)$, & $\Sigma = \diag(\sigma_1,\dots,\sigma_{\min(m,n)}) \in \mathbb{R}^{m \times n}_\p$ \\
$Y\in\mathbb{H}^{m \times n}$ & $Y=U\Lambda V^\h$ & $U\in\Sp(m)$, & $V\in\Sp(n)$, & $\Sigma = \diag(\sigma_1,\dots,\sigma_{\min(m,n)}) \in \mathbb{R}^{m \times n}_\p$
\end{tabular}

\begin{tabular}{ll>{\raggedright\arraybackslash}m{60ex}}
\multicolumn{3}{c}{\textsc{cosine-sine decomposition}} \\[0.5ex]
$Q\in\O(n)$ &
$Q =
\begin{bsmallmatrix}
    U_1&0\\
    0&U_2
\end{bsmallmatrix}
\begin{bsmallmatrix}
    C&S&0\\
    -S&C&0\\
    0&0&I_{n-2k}
\end{bsmallmatrix}
\begin{bsmallmatrix}
    V_1^\tp&0\\
    0&V_2^\tp
\end{bsmallmatrix}$ &
$U_1, V_1\in \O(k)$; $U_2, V_2 \in \O(n-k)$;  $C^2+S^2=I_k$, 
$C =\diag(c_1,\dots,c_k),\ S =\diag(s_1,\dots,s_k) \in [0,1]^{k \times k}$ \\[1ex]

$Q\in\U(n)$ &
$Q=
\begin{bsmallmatrix}
    U_1&0\\
    0&U_2
\end{bsmallmatrix}
\begin{bsmallmatrix}
    C&S&0\\
    -S&C&0\\
    0&0&I_{n-2k}
\end{bsmallmatrix}
\begin{bsmallmatrix}
    V_1^\h&0\\
    0&V_2^\h
\end{bsmallmatrix}$ &
$U_1, V_1\in \U(k)$; $U_2, V_2 \in \U(n-k)$;  $C^2+S^2=I_k$, 
$C =\diag(c_1,\dots,c_k),\ S =\diag(s_1,\dots,s_k) \in [0,1]^{k \times k}$ \\[1ex]

$Q\in\Sp(n)$ &
$Q =
\begin{bsmallmatrix}
    U_1&0\\
    0&U_2
\end{bsmallmatrix}
\begin{bsmallmatrix}
    C&S&0\\
    -S&C&0\\
    0&0&I_{n-2k}
\end{bsmallmatrix}
\begin{bsmallmatrix}
    V_1^\h&0\\
    0&V_2^\h
\end{bsmallmatrix}$ &
$U_1, V_1\in \Sp(k)$; $U_2, V_2 \in \Sp(n-k)$;  $C^2+S^2=I_k$,
$C =\diag(c_1,\dots,c_k),\ S =\diag(s_1,\dots,s_k) \in [0,1]^{k \times k}$
\end{tabular}

\begin{tabular}{l@{\hspace{3\tabcolsep}}l@{\hspace{3\tabcolsep}}l@{\hspace{.5\tabcolsep}}l}
\multicolumn{4}{c}{\textsc{autonne--takagi decomposition}} \\
$Z\in \mathsf{S}^2(\mathbb{C}^n)$ & $Z=Q \Sigma Q^\tp$ & $Q\in\U(n)$, & $\Sigma = \diag(\sigma_1,\dots,\sigma_n) \in \mathbb{R}^{n \times n}_\p$ \\
$Z\in \mathsf{V}^2(\mathbb{C}^{2n})$ & $Z=Q \Sigma Q^\la$ & $Q\in\U(2n)$, & $\Sigma = \diag(\sigma_1,\dots,\sigma_{2n}) \in \mathbb{R}^{2n \times 2n}$ \\
$Z\in \mathsf{Y}^2(\mathbb{C}^{2n})$ & $Z=Q \Sigma Q^\s$ & $Q\in\U(2n)$, & $\Sigma = \diag(\sigma_1,\dots,\sigma_{2n}) \in \mathbb{R}^{2n \times 2n}_\p$
\end{tabular}
\end{table}

The Autonne--Takagi decomposition is usually stated for matrices in $\mathsf{S}^2(\mathbb{C}^n)$. The existence of such a decomposition for matrices in $\mathsf{Y}^2(\mathbb{C}^{2n})$ and $\mathsf{V}^2(\mathbb{C}^{2n})$ is, as far as we know, new. We will establish these in Section~\ref{sec:circ2}. We emphasize that they are also distinct from the quaternionic Autonne--Takagi decomposition in \cite{QAT} and the QDQ decomposition in \cite[p.~16]{EJ}.

The cosine-sine decomposition is likely less familiar to readers and we will add a few words: Note that it depends on the choice of a $k \in \{1,\dots,\lfloor n/2 \rfloor\}$, usually chosen to be $k = \lfloor n/2 \rfloor$ so that the $I_{n-2k}$ term is either $1$, when $n$ is odd, or nonexistent, when $n$ is even. It is customary to implicitly regard $Q$ as being partitioned into $2\times 2$ blocks of sizes $k \times k$, $k \times (n-k)$, $(n-k) \times k$, and $(n-k) \times (n-k)$:
\[
Q=
\begin{bmatrix}
    Q_{11}&Q_{12}\\
    Q_{21}&Q_{22}
\end{bmatrix}.
\]
We will call $(c_i, s_i) \in [0,1]^2$ a cosine-sine values pair of $Q$; and a pair of column vectors $(u_i, v_i)$ in $(U_1,V_1)$ or $(U_2,V_2)$ left or right cosine-sine vectors pair of $Q$ respectively \cite[Section~2.6]{GV}.

In Table~\ref{tab:decomp}, eigenvectors, singular vectors, and cosine-sine vectors are column vectors of matrices in $\O(n)$, $\U(n)$, or $\Sp(n)$; Autonne--Takagi vectors are column vectors of matrices in $\U(n)$ or $\U(2n)$. Thus they automatically have unit $2$-norms. This will be an implicit assumption throughout our article. However, note that even with this unit norm assumption and the imposed ordering of eigenvalues in Table~\ref{tab:decomp}, an eigenvector of a simple eigenvalue (i.e., of multiplicity one) is still not unique, and is only determined up to scaling by an element of $\O(1)$, $\U(1)$, or $\Sp(1)$, depending on the field.  This scaling indeterminacy is also why one cannot simply regard the eigenvectors distributions as distributions on $\O(n)$, $\U(n)$, or $\Sp(n)$. Singular vector pairs and cosine-sine vector pairs suffer from similar scaling indeterminacies that we will discuss in Sections~\ref{sec:gini} and \ref{sec:circ}. In short, the decompositions in Table~\ref{tab:decomp} should be taken as existential results --- these decompositions exist, but they are never unique.

We will adopt standard orderings of the eigen, singular, and cosine-sine values.
For later purposes, in order to get well-defined maps, we will need to consider dense open subsets of matrices with either distinct eigenvalues, singular values, and cosine-sine values within various sets of matrices. These will be marked with a subscript $\times$. So
\begin{equation}\label{eq:x}
\begin{aligned}
\mathsf{S}^2_\x(\mathbb{R}^n) &=\{X\in\mathsf{S}^2(\mathbb{R}^n): X \text{ has all eigenvalues distinct}\},\\
\mathbb{R}_\x^{m \times n} &=\{Y\in\mathbb{R}^{m \times n}: Y \text{ has all singular values distinct}\},\\
\O_\x(n) &=\{Q\in\O(n): Q \text{ has all cosine-sine values distinct}\},
\end{aligned}
\end{equation}
and similarly for the other sets of matrices appearing in \eqref{eq:basic}. For reasons that will become clear in Section~\ref{sec:circ2}, Autonne--Takagi values are excluded from the discussion in this paragraph.

\subsection{Manifolds}\label{sec:man}

In this article we will identify a manifold with a chosen representation as a submanifold of matrices. For example, the real flag manifold will be identified with its representation as a submanifold in $\mathsf{S}^2(\mathbb{R}^n)$, i.e., replacing ``$\cong$'' with ``$=$''  in \eqref{eq:flag}. We emphasize that the ``$\cong$'' here is not any diffeomorphism but an $\O(n)$-equivariant diffeomorphism, i.e., the natural $\O(n)$-action on the homogeneous space is preserved. We will call such representations \emph{matrix submanifolds} \cite{LK24a}.

\begin{table}[h]
\centering
\footnotesize
\caption{Stiefel, Grassmann, flag, and Lagrangian Grassmann manifolds over $\mathbb{R}$, $\mathbb{C}$, and $\mathbb{H}$. Here $\Delta \coloneqq \diag( I_k, -I_{n-k})$ and  $\Delta_a  \coloneqq \diag(a_1I_{k_1-k_0},\dots,a_{p+1}I_{k_{p+1}-k_p})$ where $a_1,\dots,a_{p+1}$ are any distinct arbitrary real constants.}
\label{tab:rep}
\renewcommand{\arraystretch}{1.3}
\begin{tabular}{l|l|l|l}
\textsc{manifold} & \textsc{abstract} & \textsc{homogeneous} & \textsc{submanifold} \\
\hline
$\V(k, \mathbb{R}^n)$ & $k$-frames in $\mathbb{R}^n$ & $\sfrac{\O(n)}{\O(n-k)}$ & $\{X \in \mathbb{R}^{n \times k}: X^{\tp}X=I\}$ \\
$\Gr(k, \mathbb{R}^n)$  & $k$-planes in $\mathbb{R}^n$ & $\sfrac{\O(n)}{\bigl(\O(k)\times\O(n-k)\bigr)}$ & $\{Q\Delta Q^\tp \in \mathbb{R}^{n \times n} :Q\in\O(n)\}$ \\
$\Flag(k_1,\dots,k_p,\mathbb{R}^n)$ & $(k_1,\dots,k_p)$-flags in $\mathbb{R}^n$ & $\sfrac{\O(n)}{\bigl(\O(k_1)\times\dots\times\O(n-k_p)\bigr)}$ & $\{Q\Delta_aQ^\tp \in \mathbb{R}^{n \times n} :Q\in\O(n)\}$ \\
$\V(k, \mathbb{C}^n)$ & $k$-frames in $\mathbb{C}^n$ & $\sfrac{\U(n)}{\U(n-k)}$ & $\{X \in \mathbb{C}^{n \times k}: X^\h X=I\}$ \\
$\Gr(k, \mathbb{C}^n)$ & $k$-planes in $\mathbb{C}^n$ & $\sfrac{\U(n)}{\bigl(\U(k)\times\U(n-k)\bigr)}$ & $\{Q\Delta Q^\h \in \mathbb{C}^{n \times n}:Q\in\U(n)\}$ \\
$\Flag(k_1,\dots,k_p,\mathbb{C}^n)$ & $(k_1,\dots,k_p)$-flags in $\mathbb{C}^n$ & $\sfrac{\U(n)}{\bigl(\U(k_1)\times\dots\times\U(n-k_p)\bigr)}$ & $\{Q\Delta_aQ^\h \in \mathbb{C}^{n \times n} :Q\in\U(n)\}$  \\
$\V(k, \mathbb{H}^n)$ & $k$-frames in $\mathbb{H}^n$ & $\sfrac{\Sp(n)}{\Sp(n-k)}$ & $\{X \in \mathbb{H}^{n \times k}: X^\h X=I\}$ \\
$\Gr(k, \mathbb{H}^n)$ & $k$-planes in $\mathbb{H}^n$ & $\sfrac{\Sp(n)}{\bigl(\Sp(k)\times\Sp(n-k)\bigr)}$ & $\{Q\Delta Q^\h \in \mathbb{H}^{n \times n}:Q\in\Sp(n)\}$ \\
$\Flag(k_1,\dots,k_p,\mathbb{H}^n)$ & $(k_1,\dots,k_p)$-flags in $\mathbb{H}^n$ & $\sfrac{\Sp(n)}{\bigl(\Sp(k_1)\times\dots\times\Sp(n-k_p)\bigr)}$ & $\{Q\Delta_aQ^\h \in \mathbb{H}^{n \times n} :Q\in\Sp(n)\}$\\
$\LGr(\mathbb{R}^{2n})$ & Lagrangian subspaces in $\mathbb{R}^{2n}$ & $\sfrac{\U(n)}{\O(n)}$ & $\{QQ^\tp\in\mathbb{C}^{n\times n}:Q\in\U(n)\}$\\
$\LGr(\mathbb{C}^{2n})$ & Lagrangian subspaces in $\mathbb{C}^{2n}$ & $\sfrac{\Sp(n)}{\U(n)}$ & $\{QQ^\la\in\mathbb{C}^{2n\times 2n}:Q\in\Sp(n)\}$\\
$\LGr(\mathbb{H}^{2n})$ & Lagrangian subspaces in $\mathbb{H}^{2n}$ & $\sfrac{\U(2n)}{\Sp(n)}$ & $\{QQ^\s\in\mathbb{C}^{2n\times 2n}:Q\in\U(2n)\}$
\end{tabular}
\end{table}

Not every homogeneous space $G/H$ formed out of matrix groups $G$ and $H$ can be represented as a matrix submanifold, i.e., via a $G$-equivariant embedding into a space of matrices. For example, the oriented Grassmannian $\SO(n)/\bigl(\SO(k) \times \SO(n - k)\bigr)$ has no such representation. Nevertheless, the homogeneous spaces of interest to us do admit representations as matrix submanifolds. We present a list in Table~\ref{tab:rep} and henceforth identify the manifold in the first column with its representation as a matrix submanifold in the last column. 

One major theme of Sections~\ref{sec:gauss}--\ref{sec:circ2} is in showing that the parameter spaces of eigenvectors, singular vectors, cosine-sine vectors, or Autonne--Takagi vectors are all matrix submanifolds, i.e., where points are actual matrices, not equivalence classes of matrices. Another major theme is showing that the random matrix ensembles all map to the unique $G$-invariant distributions on these matrix submanifolds, as we have alluded to in Table~\ref{tab:sum}.

While the homogeneous space representations in the third column of Table~\ref{tab:rep} are well known \cite{Helgason01}, the matrix submanifold representations in the fourth column are less so \cite{LK24a}; in fact over $\mathbb{H}$ we are unable to find any reference and we will include their proofs in Sections~\ref{sec:gauss}. The same goes for the complex Lagrangian Grassmannian --- we will derive its matrix submanifold representation in Section~\ref{sec:circ2}.

Note that in the $p = 1$ case, choosing $\Delta \coloneqq I_{k,n-k} = \diag( I_k, -I_{n-k})$ allows us to embed a Grassmannian in a compact Lie group:
\[
\Gr(k, \mathbb{R}^n) \subseteq \O(n), \quad \Gr(k, \mathbb{C}^n) \subseteq \U(n), \quad \Gr(k, \mathbb{H}^n) \subseteq \Sp(n).
\]
This consideration essentially makes $\Delta = I_{k,n-k}$ a canonical choice. However, no such embedding is possible when $p > 1$ and so in this case we allow $a_1,\dots,a_{p+1} \in \mathbb{R}$ to be any arbitrary distinct constants and set
\[
\Delta_a  \coloneqq \diag(a_1I_{k_1-k_0},\dots,a_{p+1}I_{k_{p+1}-k_p})
\]
throughout this article. When discussing a flag manifold $\Flag(k_1,\dots,k_p,\mathbb{R}^n)$, we will write
\begin{equation}\label{eq:flagnot}
k_0\coloneqq 0; \qquad k_{p+1} \coloneqq n; \qquad n_j \coloneqq k_j-k_{j-1}, \; j =1,\dots p.
\end{equation}

As we will be discussing the probability distributions of the manifolds in Table~\ref{tab:rep}, we will need their volumes with respect to their Riemannian volume forms. In the following, $\Gamma$ denotes the Gamma function. We will also define
\[
\theta(r,s)\coloneqq 2^{r/2}\pi^{s/4}, \qquad \gamma(l,m,\beta)\coloneqq\prod_{i=l}^m \Gamma(\beta i/2)
\]
where $r,s \in \mathbb{Z}$, $l,m \in \mathbb{N}$, and $\beta \in \{1,2,4\}$. The first row of the following table may be found in \cite[Equations~6.5, 5.16, 6.17]{Volume1}, from which we derived the subsequent rows using the homogeneous space characterizations in Table~\ref{tab:rep}.
\begin{table}[h]
\centering
\small
\caption{Volumes of manifolds over $\mathbb{F} = \mathbb{R}$, $\mathbb{C}$, $\mathbb{H}$.}
\label{tab:vol}
\vspace*{-2ex}
\renewcommand{\arraystretch}{1.3}
\begin{tabular}{l|c|c|c}
\multirow{2}{*}{\textsc{manifold}} & \multicolumn{3}{c}{\textsc{volume}} \\
& $\mathbb{R}$ & $\mathbb{C}$ & $\mathbb{H}$ \\
\hline
&&&\\[-2ex]
$\O(n)/\U(n)/\Sp(n)$ &$\frac{\theta(2n,n(n+1))}{\gamma(1,n,1)}$ &$\frac{\sqrt{n}\theta(n+1,2n(n+1))}{\gamma(1,n,2)}$ & $\frac{\theta(2n,4n(n+1))}{\gamma(1,n,4)}$\\[2ex]
$\V(k,\mathbb{F}^n)$   & $\frac{\theta(2k,k(2n-k+1))}{\gamma(n-k+1,n,1)}$&$\sqrt{\frac{n}{n-k}}\frac{\theta(k,2k(2n-k+1))}{\gamma(n-k+1,n,2)}$ & $\frac{\theta(2k,4k(2n-k+1))}{\gamma(n-k+1,n,4)}$\\[2ex]
$\Gr(k,\mathbb{F}^n)$     & $\frac{\theta(0,2k(n-k))\gamma(1,k,1)}{\gamma(n-k+1,n,1)}$&$\sqrt{\frac{n}{k(n-k)}}\frac{\theta(-1,4k(n-k))\gamma(1,k,2)}{\gamma(n-k+1,n,2)}$ & $\frac{\theta(0,8k(n-k))\gamma(1,k,4)}{\gamma(n-k+1,n,4)}$\\[2ex]
$\Flag(k_1,\dots,k_p,\mathbb{F}^n)$        &$\frac{\theta(0,n^2)\prod_{j=1}^{p+1}\gamma(1,n_j,1)}{\theta(0,\sum_{j=1}^{p+1}n_j^2)\gamma(1,n,1)}$ &$\sqrt{\frac{n}{\prod_{j=1}^{p+1}n_j}}\frac{\theta(1-n,2n^2)\prod_{j=1}^{p+1}\gamma(1,n_j,2)}{\theta(0,2\sum_{j=1}^{p+1}n_j^2)\gamma(1,n,2)}$ &$\frac{\theta(0,4n^2)\prod_{j=1}^{p+1}\gamma(1,n_j,4)}{\theta(0,4\sum_{j=1}^{p+1}n_j^2)\gamma(1,n,4)}$\\[2ex]
$\Flag(\mathbb{F}^n)$    & $\frac{\theta(0,n(n+1))}{\gamma(1,n,1)}$&$\frac{\sqrt{n}\theta(1-n,2n(n-1))}{\gamma(1,n,2)}$ & $\frac{\theta(0,4n(n-1))}{\gamma(1,n,4)}$\\[2ex]
$\LGr(\mathbb{F}^{2n})$ & $\frac{\sqrt{n}\theta(1-n,n(n+1))\gamma(1,n,1)}{\gamma(1,n,2)}$ &$\frac{\theta(n-1,2n(n+1))\gamma(1,n,2)}{\sqrt{n}\gamma(1,n,4)}$ & $\frac{\sqrt{2n}\theta(1,4n^2)\gamma(1,n,4)}{\gamma(1,2n,2)}$
\end{tabular}
\end{table}

\subsection{Ensembles}\label{sec:dis}

In this article, the term ``distributions'' refers to probability distributions and the term ``ensembles'' exclusively refers to distributions on the vector spaces or groups of matrices appearing in Section~\ref{sec:mat}. Specifically,
\begin{enumerate}[{label=\upshape(\alph*)}]
\item\label{it:orth} $\goe(n)$, $\loe(m,n)$, $\joe(l,m,n)$ are distributions on $\mathsf{S}^2(\mathbb{R}^n)$;
\item $\gue(n)$, $\lue(m,n)$, $\jue(l,m,n)$ are distributions on $\mathsf{H}^2(\mathbb{C}^n)$;
\item $\gse(n)$, $\lse(m,n)$, $\jse(l,m,n)$ are distributions on $\mathsf{Q}^2(\mathbb{H}^n)$;
\item\label{it:gini} $\gio(m,n)$, $\giu(m,n)$, $\gis(m,n)$ are distributions on $\mathbb{R}^{m \times n}$, $\mathbb{C}^{m \times n}$, $\mathbb{H}^{m \times n}$;
\item\label{it:circ} $\cre(n)$, $\cue(n)$, $\cqe(n)$ are distributions on $\O(n)$, $\U(n)$, $\Sp(n)$;
\item \label{it:circ2} $\coe(n)$, $\cle(n)$, $\cse(n)$ are distributions on $\LGr(\mathbb{R}^{2n})$, $\LGr(\mathbb{C}^{2n})$, $\LGr(\mathbb{H}^{2n})$.
\end{enumerate}
GOE/GUE/GSE are collectively called the Gaussian ensembles, or sometimes Hermite ensembles \cite[p.~1127]{Hermite}. LOE/LUE/LSE are collectively called the Laguerre ensembles, or sometimes Wishart ensembles \cite[Definition~3.1.3]{Wishart}. JOE/JUE/JSE are collectively called the Jacobi ensembles, or sometimes multivariate beta ensembles \cite[Definition~3.3.2]{Wishart}. The distributions in \ref{it:gini} are called Ginibre ensembles \cite[p.~752]{simpspec}.  Those in \ref{it:circ} and \ref{it:circ2} are called circular ensembles \cite[p.~1057]{Cir} in the literature, except for CLE, which we are proposing as a hitherto missing candidate for the trio in \ref{it:circ2}, as we will see in \eqref{eq:include}.

The ensembles on vector spaces of matrices in \ref{it:orth}--\ref{it:gini} have probability density functions as shown in Table~\ref{tab:pdf}. For Gaussian and Ginibre ensembles, they are routine easy calculations. For Laguerre and Jacobi ensembles, they may be found in \cite[Proposition~3.2.7]{simpspec} and \cite[Theorem~3.3.1]{Wishart} respectively, where the $\beta$-multivariate Gamma function \cite[Theorem~2.1.11]{Wishart} is given by
\[
\Gamma_n^{\beta}(s) \coloneqq \pi^{\beta n(n-1)/4}\prod_{j=1}^n\Gamma\bigl(s-(j-1)\beta/2\bigr)
\]
for $\beta\in\{1,2,4\}$ and any positive half-integer $s > (n-1)/2$. 

\begin{table}[h]
\centering
\small
\caption{$\beta = 1,2,4$ corresponds to $\mathbb{R}$, $\mathbb{C}$, $\mathbb{H}$ respectively. Note that $Y^\h = Y^\tp$ over $\mathbb{R}$.}
\label{tab:pdf}
\renewcommand{\arraystretch}{1.3}
\begin{tabular}{l|l}
\textsc{ensemble} & \textsc{probability density} \\
\hline
&\\[-2ex]
$\goe$/$\gue$/$\gse(n)$
 & $2^{-n/2}\pi^{-\beta n(n-1 +2/\beta)/4}e^{-\tr(X^2)/2}$ \\[2ex]
$\loe$/$\lue$/$\lse(m,n)$
 & $\frac{1}{(2/\beta)^{\beta mn/2}\Gamma_n^{\beta}(\beta m/2)}(\det X)^{(m-n+1)\beta/2-1}e^{-\tr(X)\beta/2}$\\[2ex]
$\joe$/$\jue$/$\jse(l,m,n)$
 & $\frac{\Gamma_n^{\beta}((l+m)\beta/2)}
         {\Gamma_n^{\beta}(l\beta/2)
          \Gamma_n^{\beta}(m\beta/2)}
    (\det X)^{(l-n+1)\beta/2-1}
    \det(I_n-X)^{(m-n+1)\beta/2-1}$ \\[2ex]
$\gio$/$\giu$/$\gis(m,n)$
 & $(\beta/2\pi)^{mn\beta/2}e^{-\tr(Y^\h Y)\beta/2}$
\end{tabular}
\end{table}

Unlike the ensembles \ref{it:orth}--\ref{it:gini} for vector spaces of matrices, the circular ensembles in \ref{it:circ} and \ref{it:circ2}  are distributions on groups and homogeneous spaces of matrices respectively. In general, any locally compact group $G$ has a left-invariant positive measure unique up to scaling by a positive constant. Any homogeneous space $G/H$ where $H$ is a closed subgroup of $G$ has a positive $G$-invariant measure on $G/H$ if and only if the modular function of $G$, when restricted to $H$, equals the modular function of $H$ \cite[Theorem~2.51]{Folland1}. This measure is unique up to scaling by a positive constant. For a compact $G$, $H$ is also compact, and thus both are unimodular. Hence we have a unique probability measure on $G/H$ that is $G$-invariant \cite[Proposition~2.27]{Folland1}. We call this unique probability measure the \emph{uniform distribution} and denote it by $\unif(G/H)$.  We state this as a proposition for easy reference later.
\begin{proposition}\label{prop:unif}
Let $H$ be a closed subgroup of a compact group $G$. Then $\unif(G/H)$ is the unique distribution on the homogeneous space $G/H$ that is $G$-invariant.
\end{proposition}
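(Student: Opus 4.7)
The plan is to prove existence and uniqueness separately, using the Haar measure on $G$ as a bridge. Since $G$ is compact, it carries a unique left-invariant probability measure $\mu_G$ (its normalized Haar measure), and since $H$ is closed in $G$, it is also compact with its own normalized Haar measure $\mu_H$. Both $G$ and $H$ are automatically unimodular, so the modular-function condition needed for the existence of a $G$-invariant measure on $G/H$, cited from \cite[Theorem~2.51]{Folland1}, is satisfied trivially. This reduces the proof to a direct construction for existence plus a lifting argument for uniqueness.

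For existence, define $\nu \coloneqq \pi_* \mu_G$ to be the pushforward of $\mu_G$ under the quotient map $\pi: G \to G/H$, so that $\nu(A) = \mu_G(\pi^{-1}(A))$ for every Borel set $A \subseteq G/H$. This $\nu$ is a probability measure since $\mu_G$ is. For the natural $G$-action $g_0 \cdot gH = g_0 g H$, one has $\pi^{-1}(g_0 \cdot A) = g_0 \, \pi^{-1}(A)$, so left-invariance of $\mu_G$ gives $\nu(g_0 \cdot A) = \mu_G(g_0 \pi^{-1}(A)) = \mu_G(\pi^{-1}(A)) = \nu(A)$, establishing $G$-invariance. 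This $\nu$ is the desired $\unif(G/H)$.

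For uniqueness, let $\nu'$ be any $G$-invariant probability measure on $G/H$. The strategy is to lift $\nu'$ to a probability measure on $G$ using $\mu_H$, deduce via uniqueness of Haar that this lift equals $\mu_G$, and then push down to recover $\nu'$. Explicitly, for $f \in C(G)$ set
\[
\mu'(f) \coloneqq \int_{G/H} \Bigl( \int_H f(gh)\, d\mu_H(h) \Bigr) d\nu'(gH),
\]
where the inner integral is constant on each coset $gH$ by right-invariance of $\mu_H$, so the outer integrand descends to a well-defined function on $G/H$. A routine check shows $\mu'$ is a Radon probability measure on $G$, and its left-invariance follows from the $G$-invariance of $\nu'$ after a change of variable in the outer integral. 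Uniqueness of the Haar probability measure on the compact group $G$ then forces $\mu' = \mu_G$. Evaluating both sides on functions of the form $\tilde f \circ \pi$ with $\tilde f \in C(G/H)$ collapses the inner integral to $\tilde f(gH)$ and yields $\nu'(\tilde f) = \nu(\tilde f)$, so $\nu' = \nu$ by Riesz representation. No serious obstacle arises; the only mildly technical point is the continuity of $g \mapsto \int_H f(gh)\, d\mu_H(h)$ on $G$, which follows from uniform continuity of $f$ on compact $G$ and guarantees that the outer integral against $\nu'$ is meaningful.
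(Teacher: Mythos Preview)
Your argument is correct and essentially reproduces the standard Haar--Weil construction. One small quibble: the well-definedness of $gH \mapsto \int_H f(gh)\,d\mu_H(h)$ uses the \emph{left}-invariance of $\mu_H$ (replacing $g$ by $gh_0$ amounts to the substitution $h \mapsto h_0 h$), not right-invariance as you wrote; since $H$ is compact and hence unimodular this is harmless, but the justification as stated is slightly off.

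By way of comparison, the paper does not give its own proof of this proposition. It is stated for easy reference, with the justification supplied in the preceding paragraph by citing \cite[Theorem~2.51]{Folland1} for the modular-function criterion and \cite[Proposition~2.27]{Folland1} for the finiteness/normalization in the compact case. Your write-up is a self-contained version of exactly those cited results: the pushforward construction for existence and the lift-to-$G$ argument for uniqueness are the standard proofs one would find upon unpacking Folland's references. So the approaches are the same in substance; yours simply makes explicit what the paper leaves to the literature.
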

The circular ensembles are just alternative names for the following uniform distributions:
\begin{equation}\label{eq:circ}\small
\begin{aligned}
\cre(n) &= \unif\bigl(\O(n)\bigr),  &\cue(n) &= \unif \bigl( \U(n) \bigr), &\cqe(n) &= \unif \bigl( \Sp(n) \bigr), \\
\coe(n) &= \unif\bigl(\U(n)/\O(n)\bigr), &\cle(n) &= \unif\bigl(\Sp(n)/\U(n)\bigr), &\cse(n) &= \unif \bigl( \U(2n)/\Sp(n) \bigr).
\end{aligned}
\end{equation}
More generally, for any of the manifolds appearing in Table~\ref{tab:rep}, its uniform distribution is given by that of its homogeneous space representation (in the third column). The probability densities for all these uniform distributions are given by the constant function with their respective volume in Table~\ref{tab:vol} as normalization constant.

Although we will rely solely on the probability densities in Table~\ref{tab:pdf} for our results in this article, there is a more common entrywise  description for all but three of these ensembles. We include it in Appendix~\ref{app:alt} for completeness. There are authors (e.g., \cite[p.~111]{simpspec}) who use the term ``ensembles'' to refer to the distributions of eigenvalues, singular values, or cosine-sine values. We will refrain from such ambiguity. In fact, these distributions will not appear in our article as we are exclusively interested in the distributions of eigenvectors, singular vectors, cosine-sine vectors, and Autonne--Takagi vectors of random matrix ensembles.

\section{Eigenvectors distributions of Gaussian ensembles}\label{sec:gauss}

The homogeneous space representations of the manifolds in the third column of Table~\ref{tab:rep} are all standard and well-known. The matrix submanifold representations in the fourth column are either well-known or have appeared in \cite{HM94, LK24a} with the quaternionic cases as the only exception. We will provide a proof for the Stiefel, Grassmann, and flag manifolds here, deferring the Lagrangian Grassmannian to Section~\ref{sec:circ2}.
\begin{proposition}[Quaternionic Stiefel, Grassmann, flag manifolds]\label{sti}
We have $\Sp(n)$-equivariant diffeomorphisms:
\begin{align*}
\V(k,\mathbb{H}^n) &=\{X \in \mathbb{H}^{n \times k}: X^\h X=I\} \cong\Sp(n)/\Sp(n-k), \\
\Gr(k,\mathbb{H}^n) &= \{Q\Delta Q^\h \in \mathsf{Q}^2 (\mathbb{H}^n) :Q\in\Sp(n)\}  \cong\Sp(n)/\bigl(\Sp(k) \times \Sp(n-k)\bigr), \\
\Flag(k_1,\dots,k_p,\mathbb{H}^n) &= \{Q\Delta_aQ^\h \in \mathsf{Q}^2 (\mathbb{H}^n) :Q\in\Sp(n)\} \cong\Sp(n)/\bigl(\Sp(k_1)\times\dots\times\Sp(n-k_p)\bigr).
\end{align*}
Note that we identify these manifold with their matrix representations.
\end{proposition}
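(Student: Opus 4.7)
The plan is to apply the orbit--stabilizer theorem to three natural $\Sp(n)$-actions, one for each manifold. In each case the relevant matrix submanifold will be a single orbit of a smooth $\Sp(n)$-action, and by compactness of $\Sp(n)$ the induced map from the coset space $\Sp(n)/\text{Stab}$ to the orbit is automatically an $\Sp(n)$-equivariant diffeomorphism. The work thus reduces to (i) verifying transitivity and (ii) computing the stabilizer of a convenient basepoint.

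For $\V(k,\mathbb{H}^n)$, I would let $\Sp(n)$ act by left multiplication on $\{X \in \mathbb{H}^{n \times k} : X^\h X = I\}$. Transitivity follows by completing any such $X$ to a full matrix in $\Sp(n)$ via quaternionic Gram--Schmidt; that element sends the basepoint $\begin{bsmallmatrix} I_k \\ 0 \end{bsmallmatrix}$ to $X$. A direct block calculation then shows the stabilizer of the basepoint consists of matrices of the form $\diag(I_k, D)$ with $D \in \Sp(n-k)$, identifying it with $\Sp(n-k)$. For $\Gr(k,\mathbb{H}^n)$ and $\Flag(k_1,\dots,k_p,\mathbb{H}^n)$, I would instead let $\Sp(n)$ act by conjugation $P \cdot Y = PYP^\h$; transitivity is immediate from the matrix submanifold description. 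The stabilizer of $\Delta_a$ consists of $P \in \Sp(n)$ commuting with $\Delta_a$, and since $\Delta_a$ is real-diagonal with distinct eigenvalues $a_1,\dots,a_{p+1}$ of multiplicities $n_1,\dots,n_{p+1}$, the commutant is exactly the block-diagonal subgroup $\diag(P_1,\dots,P_{p+1})$ with $P_j \in \Sp(n_j)$, giving the advertised product of compact symplectic groups. The Grassmannian statement is the $p=1$ specialization with $\Delta = I_{k,n-k}$.

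The one quaternionic-specific point worth flagging, and what I would expect to be the only real obstacle, is the commutant computation: it succeeds despite noncommutativity of $\mathbb{H}$ precisely because $\Delta_a$ has \emph{real} diagonal entries, which are central in $\mathbb{H}$, so the equation $P\Delta_a = \Delta_a P$ forces the block structure exactly as it does over $\mathbb{R}$ or $\mathbb{C}$. Beyond this, the argument parallels its real and complex counterparts, and the standard ingredients (Gram--Schmidt, block-matrix identities, orbit--stabilizer for compact Lie groups) all carry over verbatim to $\mathbb{H}$.
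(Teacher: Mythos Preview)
Your proposal is correct and follows essentially the same approach as the paper: left multiplication for the Stiefel case and conjugation for the Grassmann/flag cases, with the same basepoints and the same stabilizer computations via the orbit--stabilizer theorem. Your added remark that the commutant calculation works because the diagonal entries of $\Delta_a$ are real (hence central in $\mathbb{H}$) is a nice clarification the paper leaves implicit.
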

\begin{proof}
The unitary symplectic group $\Sp(n)$ acts transitively on $\Flag(k_1,\dots,k_p,\mathbb{H}^n)$ by conjugation. The stabilizer of $\Delta_a$ is its centralizer
\[
Z_{\Sp(n)}(\Delta_a)=\Sp(k_1)\times\dots\times\Sp(n-k_p).
\]
Hence $\Flag(k_1,\dots,k_p,\mathbb{H}^n)\cong\Sp(n)/\bigl(\Sp(k_1)\times\dots\times\Sp(n-k_p)\bigr)$. This is an $\Sp(n)$-equivariant diffeomorphism by construction. The Grassmannian is the $p =1$ case.
It is also true that $\Sp(n)$ acts on $\V(k,\mathbb{H}^n)$ by left multiplication. Noticing that for any $Q\in\Sp(n)$, $Q\begin{bsmallmatrix}
    I_k\\
    0
\end{bsmallmatrix}=Q_k$, where $Q_k$ denotes the first $k$ columns of $Q$, this action is transitive, and the stabilizer of $\begin{bsmallmatrix}
    I_k\\
    0
\end{bsmallmatrix}$ is\[
Z_{\Sp(n)}(\begin{bsmallmatrix}
    I_k\\
    0
\end{bsmallmatrix})=\{\diag(I_k,W):W\in\Sp(n-k)\}\cong\Sp(n-k).
\]
Hence $\V(k,\mathbb{H}^n)\cong\Sp(n)/\Sp(n-k)$, and this is an $\Sp(n)$-equivalent diffeomorphism.
\end{proof}

From Table~\ref{tab:pdf}, the probability densities of the Gaussian ensembles are
\[
f_{\beta,n}^\G(X)=\begin{cases}
  2^{-\frac{n}{2}}\pi^{-\frac{n(n+1)}{4}}e^{-\frac{1}{2}\tr(X^2)}&\text{if }\beta=1,\\[0.5ex]
  2^{-\frac{n}{2}}\pi^{-\frac{n^2}{2}}e^{-\frac{1}{2}\tr(X^2)}&\text{if }\beta=2,\\[1ex]
  2^{-\frac{n}{2}}\pi^{-n(n-\frac{1}{2})}e^{-\frac{1}{2}\tr(X^2)}&\text{if }\beta=4.
\end{cases}
\]
So $\goe(n)$ is $\O(n)$-invariant, $\gue(n)$ is $\U(n)$-invariant, and $\gse(n)$ is $\Sp(n)$-invariant. A key to obtaining our result is that the following maps, while not invariant, are equivariant under the adjoint action of  $\O(n)$, $\U(n)$, and $\Sp(n)$.

\begin{definition}\label{def:phi}
Let $0<k_1<\dots<k_p<n$ be integers. We define
\begin{alignat*}{2}
\varphi_{k_1,\dots,k_p} :\mathsf{S}^2_\x (\mathbb{R}^n) &\to\Flag(k_1,\dots,k_p,\mathbb{R}^n), \quad & X &\mapsto Q\Delta_aQ^\tp, \\
\varphi_{k_1,\dots,k_p}:\mathsf{H}^2_\x (\mathbb{C}^n)&\to\Flag(k_1,\dots,k_p,\mathbb{C}^n), \quad &X &\mapsto Q\Delta_aQ^\h, \\
\varphi_{k_1,\dots,k_p}:\mathsf{Q}^2_\x (\mathbb{H}^n)&\to\Flag(k_1,\dots,k_p,\mathbb{H}^n), \quad &X &\mapsto Q\Delta_aQ^\h,
\end{alignat*}
where $X=Q\Lambda Q^\h$ is an eigenvalue decomposition with $\lambda_1 >\dots>\lambda_n $ and $Q\in \Sp(n)$. 
\end{definition}

While the matrix of eigenvectors $Q$ in Definition~\ref{def:phi} is not uniquely defined, we will see in Theorem~\ref{thmfk} that $\varphi_{k_1,\dots,k_p}$ is nevertheless a well-defined function.  Furthermore, $\varphi_{k_1,\dots,k_p} (X)$ is also well-defined as a random matrix as the eigenvalues of a random matrix $X$ in any of these Gaussian ensembles is simple \cite[Proposition~1.3.4]{simpspec}. Formally, using $\goe(n)$ for illustration, $X$ is a matrix-valued random variable, i.e., a measurable function $X : \Omega \to \mathsf{S}^2(\mathbb{R}^n)$, and $\varphi_{k_1,\dots,k_p}$ is defined on a Zariski open subset of $\mathsf{S}^2 (\mathbb{R}^n)$. So the random matrix $\varphi_{k_1,\dots,k_p} (X)$ is the measurable function $\varphi_{k_1,\dots,k_p} \circ  X : \Omega \to \Flag(k_1,\dots,k_p,\mathbb{R}^n)$. Nevertheless such pedantry appears to be uncommon and so we will not repeat this below.

We have slightly abused notations and used $\varphi_{k_1,\dots,k_p} $ to denote three different functions in Definition~\ref{def:phi}. However, their definitions are consistent as $\mathsf{S}^2 (\mathbb{R}^n) \subseteq \mathsf{H}^2 (\mathbb{C}^n) \subseteq \mathsf{Q}^2 (\mathbb{H}^n)$ and
\[
\Flag(k_1,\dots,k_p,\mathbb{R}^n) \subseteq \Flag(k_1,\dots,k_p,\mathbb{C}^n) \subseteq \Flag(k_1,\dots,k_p,\mathbb{H}^n).
\]
So in Definition~\ref{def:phi} the first function is essentially a restriction of the second, which in turn is essentially a restriction of the third. This observation will be used in our proofs too: We will state a result over $\mathbb{R}$, $\mathbb{C}$, and $\mathbb{H}$, but will only prove the quaternionic case, as the real and complex cases may essentially be obtained by restriction. The observant reader will soon notice that the noncommutativity of $\mathbb{H}$ does not require any  special handling in our proofs. The reason is that we are working with matrices, which are already noncommutative, and the algebra of matrices over $\mathbb{H}$ essentially behaves no differently from that over $\mathbb{R}$ and $\mathbb{C}$ \cite{QCS}.

The considerations in the last two paragraphs will apply to the rest of our article in the context of other ensembles and other decompositions. In any proofs that involve both random matrices and constant matrices, we use $A$ and $B$ to denote the latter for easy distinction; this will also apply to the rest of our article.

\begin{theorem}[Gaussian ensembles and flag manifolds]\label{thmfk} Let $0<k_1<\dots<k_p<n$ be integers.
    \begin{enumerate}[{label=\upshape(\roman*)}]
        \item\label{it:goe} If $X \sim \goe(n)$, then $\varphi_{k_1,\dots,k_p}(X) \sim\unif\bigl(\Flag(k_1,\dots,k_p,\mathbb{R}^n)\bigr)$.
        \item\label{it:gue} If $X \sim \gue(n)$, then $\varphi_{k_1,\dots,k_p}(X) \sim\unif\bigl(\Flag(k_1,\dots,k_p,\mathbb{C}^n)\bigr)$.
        \item\label{it:gse} If $X \sim \gse(n)$, then $\varphi_{k_1,\dots,k_p}(X) \sim\unif\bigl(\Flag(k_1,\dots,k_p,\mathbb{H}^n)\bigr)$.
    \end{enumerate}
\end{theorem}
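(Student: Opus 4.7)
My plan is to exploit the $G$-equivariance of the map $\varphi_{k_1,\dots,k_p}$ together with the $G$-invariance of the Gaussian ensembles, and then invoke Proposition~\ref{prop:unif}. Since the excerpt states that it suffices to handle the quaternionic case (the real and complex cases following by restriction), I would focus on \ref{it:gse}.

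The first step is to check that $\varphi_{k_1,\dots,k_p}$ is well-defined on $\mathsf{Q}^2_\x(\mathbb{H}^n)$. Given $X \in \mathsf{Q}^2_\x(\mathbb{H}^n)$ with the prescribed ordering of eigenvalues, the matrix $Q$ in $X = Q\Lambda Q^\h$ is unique up to right multiplication by a diagonal matrix $D \in \Sp(1)^n \subseteq \Sp(n)$. Since $D$ is block diagonal of sizes $n_1,\dots,n_{p+1}$ with entries in $\Sp(1)$, and $\Delta_a$ is diagonal with these same block sizes, $D$ commutes with $\Delta_a$, so $QD \Delta_a (QD)^\h = Q\Delta_a Q^\h$. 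Thus the value $\varphi_{k_1,\dots,k_p}(X) \in \Flag(k_1,\dots,k_p,\mathbb{H}^n)$ is independent of the choice of $Q$. Measurability follows from the continuity of the eigendecomposition on $\mathsf{Q}^2_\x(\mathbb{H}^n)$; and since $\mathsf{Q}^2(\mathbb{H}^n) \setminus \mathsf{Q}^2_\x(\mathbb{H}^n)$ has Lebesgue measure zero, this defines $\varphi_{k_1,\dots,k_p}(X)$ almost surely for $X \sim \gse(n)$.

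The second step is to verify $\Sp(n)$-equivariance: for any $A \in \Sp(n)$ and $X \in \mathsf{Q}^2_\x(\mathbb{H}^n)$, if $X = Q\Lambda Q^\h$ is the prescribed eigendecomposition then $AXA^\h = (AQ)\Lambda(AQ)^\h$ is one for $AXA^\h$, so
\[
\varphi_{k_1,\dots,k_p}(AXA^\h) = (AQ)\Delta_a (AQ)^\h = A\,\varphi_{k_1,\dots,k_p}(X)\,A^\h.
\]
The third step is $\Sp(n)$-invariance of $\gse(n)$: the density $f^\G_{4,n}(X) = 2^{-n/2}\pi^{-n(n-\frac{1}{2})}e^{-\tr(X^2)/2}$ depends on $X$ only through $\tr(X^2)$, which is invariant under $X \mapsto AXA^\h$; and the Lebesgue measure on $\mathsf{Q}^2(\mathbb{H}^n)$ is preserved by this action since it is the restriction of an orthogonal transformation on the ambient Euclidean space. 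So $AXA^\h \sim \gse(n)$ whenever $X \sim \gse(n)$ and $A \in \Sp(n)$.

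Combining these three ingredients: for any Borel set $B \subseteq \Flag(k_1,\dots,k_p,\mathbb{H}^n)$ and any $A \in \Sp(n)$,
\[
\mathbb{P}\bigl(\varphi_{k_1,\dots,k_p}(X) \in A B A^\h\bigr) = \mathbb{P}\bigl(\varphi_{k_1,\dots,k_p}(A^\h X A) \in B\bigr) = \mathbb{P}\bigl(\varphi_{k_1,\dots,k_p}(X) \in B\bigr),
\]
where the first equality uses equivariance and the second uses invariance of $\gse(n)$. Hence the pushforward distribution is $\Sp(n)$-invariant on the compact homogeneous space $\Flag(k_1,\dots,k_p,\mathbb{H}^n) \cong \Sp(n)/(\Sp(k_1)\times\dots\times\Sp(n-k_p))$ from Proposition~\ref{sti}, and Proposition~\ref{prop:unif} identifies it as $\unif\bigl(\Flag(k_1,\dots,k_p,\mathbb{H}^n)\bigr)$. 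I do not anticipate a serious obstacle here; the only slightly delicate point is the well-definedness check, which hinges on the observation that the residual $\Sp(1)^n$ indeterminacy in $Q$ lies inside the centralizer of $\Delta_a$ that defines the flag manifold as a matrix submanifold.
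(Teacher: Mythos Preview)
Your proposal is correct and follows essentially the same approach as the paper: verify well-definedness of $\varphi_{k_1,\dots,k_p}$ via the $\Sp(1)^n$ indeterminacy commuting with $\Delta_a$, establish $\Sp(n)$-equivariance of the map and $\Sp(n)$-invariance of $\gse(n)$, then invoke Proposition~\ref{prop:unif}. Your write-up is a bit more explicit on measurability and the pushforward computation, but the argument is the same.
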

\begin{proof}
We identify $\Sp(1)^n$ with the subgroup of $\Sp(n)$ comprising diagonal matrices whose diagonal entries are unit quaternions. Let $X\sim\gse(n)$ and the subsequent notations be as in Definition~\ref{def:phi}. The random matrix of eigenvectors $Q$ takes value in $\Sp(n)$ and is uniquely defined up to right multiplication by $D \in \Sp(1)^n$ almost surely. Since $QD\Delta_a (QD)^\h=Q\Delta_a Q^\h$, $\varphi_{k_1,\dots,k_p}$ is well-defined. For any $A\in\Sp(n)$, since $AQ\Lambda Q^\h A^\h$ is an eigenvalue decomposition of $AXA^\h$,
\[
\varphi_{k_1,\dots,k_p}(AXA^\h) = AQ\Delta_a Q^\h A^\h = A\varphi_{k_1,\dots,k_p}(X)A^\h.
\]
Since $\gse(n)$ is $\Sp(n)$-invariant, $X$ and $AXA^\h$ have the same distribution, so do $\varphi_{k_1,\dots,k_p}(X)$ and $\varphi_{k_1,\dots,k_p}(AXA^\h)$. Hence $\varphi_{k_1,\dots,k_p}(X)$ and $A \varphi_{k_1,\dots,k_p}(X)A^\h$ are identically distributed for all $A \in \Sp(n)$. By Proposition~\ref{prop:unif}, the uniform distribution is the unique probability distribution on $\Flag(k_1,\dots,k_p,\mathbb{H}^n)$ that is $\Sp(n)$-invariant; we must have $\varphi_{k_1,\dots,k_p}(X) \sim\unif\bigl(\Flag(k_1,\dots,k_p,\mathbb{H}^n)\bigr)$. The other two cases follow by restriction as discussed previously.
\end{proof}

A word about the interpretation of Theorem~\ref{thmfk} is in order. It says that any increasing sequence of eigenspaces of $X \sim \goe(n)$,
\[
\spn\{q_1,\dots,q_{k_1} \} \subseteq \spn\{q_1,\dots,q_{k_2} \} \subseteq \dots \subseteq \spn\{q_1,\dots,q_{k_p} \}
\]
is uniformly distributed in the space of all $(k_1,\dots,k_p)$-flags. Here $q_1,\dots,q_n$ are the column vectors of $Q$, i.e., the eigenvectors of $X$.

The most informative special case is when $p = n-1$, which yields the distribution of a complete set of eigenvectors of these random matrices. Note that in this case $\Delta_a  = \diag(a_1,\dots,a_n)$ is a diagonal matrix with $n$ distinct diagonal entries $a_1,\dots,a_n \in \mathbb{R}$. While the following corollary follows from Theorem~\ref{thmfk}, we include a proof to explain why eigenvalue-ordered eigenbases are naturally parameterized by the complete flag manifold.

\begin{corollary}[Eigenvectors distributions of Gaussian ensembles]\leavevmode
\begin{enumerate}[{label=\upshape(\roman*)}]
    \item If $X \sim \goe(n)$, then $\varphi_{1,2,\dots,n-1}(X)\sim\unif\bigl(\Flag(\mathbb{R}^n)\bigr)$.
    \item If $X \sim \gue(n)$, then $\varphi_{1,2,\dots,n-1}(X)\sim\unif\bigl(\Flag(\mathbb{C}^n)\bigr)$.
    \item If $X \sim \gse(n)$, then $\varphi_{1,2,\dots,n-1}(X)\sim\unif\bigl(\Flag(\mathbb{H}^n)\bigr)$.
\end{enumerate}
\end{corollary}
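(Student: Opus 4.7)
The plan is to obtain the three assertions as the single specialization of Theorem~\ref{thmfk} to $p = n-1$ and $(k_1,\dots,k_{n-1}) = (1,2,\dots,n-1)$, with the proof dedicated mainly to unpacking what this specialization says about an ordered eigenbasis.

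First I would verify the combinatorial setup: under this choice, the multiplicities $n_j = k_j - k_{j-1}$ in \eqref{eq:flagnot} are all equal to $1$, so $\Delta_a = \diag(a_1,\dots,a_n)$ with $a_1,\dots,a_n\in\mathbb{R}$ distinct, and the centralizer appearing in the homogeneous-space presentation of Proposition~\ref{sti} (and its real and complex analogues) collapses to $\O(1)^n$, $\U(1)^n$, or $\Sp(1)^n$. This is precisely the right-multiplication indeterminacy
\[
Q \mapsto QD,\qquad D=\diag(d_1,\dots,d_n),\quad |d_j|=1,
\]
that the individual eigenvectors $q_1,\dots,q_n$ of $X$ suffer from, each simple eigenvector being determined only up to a unit scalar. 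Since $QD\Delta_a(QD)^\h = Q\Delta_a Q^\h$, the image $\varphi_{1,2,\dots,n-1}(X) = Q\Delta_a Q^\h$ is indifferent to this indeterminacy and, via the matrix submanifold presentation \eqref{eq:flag}, encodes exactly the complete flag
\[
\spn\{q_1\}\subset\spn\{q_1,q_2\}\subset\dots\subset\spn\{q_1,\dots,q_{n-1}\}
\]
obtained by listing the eigenlines in the order $\lambda_1>\dots>\lambda_n$ mandated in Definition~\ref{def:phi}.

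With this identification made explicit, the three distributional conclusions are simply parts \ref{it:goe}, \ref{it:gue}, \ref{it:gse} of Theorem~\ref{thmfk} applied to $(1,2,\dots,n-1)$, using $\Flag(\mathbb{R}^n) = \Flag(1,2,\dots,n-1,\mathbb{R}^n)$ and similarly over $\mathbb{C}$ and $\mathbb{H}$. There is no substantive obstacle; the only point worth emphasizing, and presumably the reason a separate proof is included, is that it is the matrix submanifold representation of $\Flag(\mathbb{F}^n)$, and not the group $\O(n)/\U(n)/\Sp(n)$, that receives a well-defined pushforward of the Gaussian ensemble, and that this pushforward must coincide with the unique invariant distribution supplied by Proposition~\ref{prop:unif}.
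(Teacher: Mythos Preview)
Your proposal is correct and mirrors the paper's own argument: both obtain the three assertions by specializing Theorem~\ref{thmfk} to $p=n-1$, and both devote the proof to explaining that the $\O(1)^n/\U(1)^n/\Sp(1)^n$ scaling indeterminacy of an ordered eigenbasis is exactly what the quotient $\Sp(n)/\Sp(1)^n\cong\Flag(\mathbb{H}^n)$ (and its real/complex analogues) captures via $\lb Q\rb\mapsto Q\Delta_a Q^\h$. Your unpacking of the combinatorics ($n_j=1$, the centralizer, the explicit complete flag of eigenspaces) is slightly more detailed than the paper's version but substantively identical.
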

\begin{proof}
As in the proof of Theorem~\ref{thmfk}, any two eigenvectors of $X \sim \gse(n)$ for the same (simple) eigenvalue differ by an element of $\Sp(1)$. It follows that the space of eigenbases is $\Sp(n)/\Sp(1)^n$. By Proposition~\ref{sti}, $\Sp(n)/\Sp(1)^n \cong \Flag(\mathbb{H}^n)$ with the isomorphism given precisely by $\lb Q \rb \mapsto Q \Delta_a Q^\h$, where $\lb Q \rb$ denote equivalence class.
\end{proof}

The $p = 1$ special case of  Theorem~\ref{thmfk} deserves special highlight too. In this case, we view the result as a method to generate uniformly distributed points on the Grassmannian, arguably the most important flag manifold, using Gaussian ensembles.
\begin{corollary}[Uniform sampling on Grassmannian]\label{cor:Gr}
Let $0 < k < n$ be integers.
\begin{enumerate}[{label=\upshape(\roman*)}]
    \item If $X \sim \goe(n)$, then $\varphi_k(X)\sim\unif\bigl(\Gr(k,\mathbb{R}^n)\bigr)$.
    \item If $X \sim \gue(n)$, then $\varphi_k(X)\sim\unif\bigl(\Gr(k,\mathbb{C}^n)\bigr)$.
    \item If $X \sim \gse(n)$, then $\varphi_k(X)\sim\unif\bigl(\Gr(k,\mathbb{H}^n)\bigr)$.
\end{enumerate}
\end{corollary}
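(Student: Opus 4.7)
The plan is to observe that Corollary~\ref{cor:Gr} is simply the $p = 1$, $k_1 = k$ specialization of Theorem~\ref{thmfk}. Indeed, by definition $\Gr(k, \mathbb{F}^n) = \Flag(k, \mathbb{F}^n)$ for $\mathbb{F} \in \{\mathbb{R}, \mathbb{C}, \mathbb{H}\}$, and the map $\varphi_k$ in Definition~\ref{def:phi} is exactly $\varphi_{k_1,\dots,k_p}$ with $p = 1$ and $k_1 = k$; hence each of Theorem~\ref{thmfk}\ref{it:goe}--\ref{it:gse} delivers the corresponding part of the corollary. A one-line proof consisting of this observation would suffice.

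If one preferred a self-contained, recipe-style argument --- appropriate given that the corollary is highlighted as a uniform sampling method on the Grassmannian --- I would rerun the same three-step argument used for Theorem~\ref{thmfk} in this simpler setting. First, verify that $\varphi_k$ is well-defined: any two eigenvalue decompositions $X = Q\Lambda Q^\h$ with $\lambda_1 > \cdots > \lambda_n$ differ by right multiplication of $Q$ by a diagonal matrix in $\O(1)^n$, $\U(1)^n$, or $\Sp(1)^n$, which is contained in $G(k) \times G(n-k)$ --- the stabilizer of $\Delta_a$ under conjugation when $G \in \{\O, \U, \Sp\}$ --- so $Q\Delta_a Q^\h$ is unambiguous. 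Second, check equivariance $\varphi_k(AXA^\h) = A\varphi_k(X) A^\h$ for $A$ in the compact group $G(n) \in \{\O(n), \U(n), \Sp(n)\}$, by noting that $AQ\Lambda Q^\h A^\h$ is an eigenvalue decomposition of $AXA^\h$ with the same ordering. Third, combine this with the $G(n)$-invariance of the Gaussian density (immediate from $\tr((AXA^\h)^2) = \tr(X^2)$) to conclude that $\varphi_k(X)$ and $A\varphi_k(X) A^\h$ are identically distributed, and then invoke Proposition~\ref{prop:unif} to identify the pushforward distribution as the unique $G(n)$-invariant probability measure $\unif(\Gr(k,\mathbb{F}^n))$.

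There is no real obstacle --- this is a ``direct corollary'' in the literal sense. The only minor subtlety worth flagging is that the distinct constants $a_1, a_2 \in \mathbb{R}$ entering $\Delta_a = \diag(a_1 I_k, a_2 I_{n-k})$ are arbitrary; different choices produce $G(n)$-equivariantly diffeomorphic matrix submanifold representations of $\Gr(k, \mathbb{F}^n)$, so Proposition~\ref{prop:unif} yields the same uniform distribution under any such identification. In particular, the canonical choice $\Delta = I_{k,n-k}$ from Section~\ref{sec:man}, which embeds the Grassmannian inside the corresponding compact group, is one admissible specialization.
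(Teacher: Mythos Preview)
Your proposal is correct and matches the paper's approach exactly: the paper presents this corollary without a separate proof, simply noting that it is ``the $p = 1$ special case of Theorem~\ref{thmfk}.'' Your first paragraph captures this fully; the self-contained rerun and the remark about the choice of $\Delta_a$ are accurate elaborations but go beyond what the paper itself provides.
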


Observant readers might also have noticed that our proof of Theorem~\ref{thmfk}\ref{it:gse} primarily relies only on the $\Sp(n)$-invariance of the probability density function for $\gse(n)$. Clearly, we could also have used the $\O(n)$-invariance of $\goe(n)$ and the $\U(n)$-invariance of $\gue(n)$ to deduce Theorem~\ref{thmfk}\ref{it:goe} and \ref{it:gue}. This observation shows that the conclusion of Theorem~\ref{thmfk} would in fact apply to any random matrix ensemble with $G$-invariant densities. From Table~\ref{tab:pdf},  the probability densities of the Laguerre ensembles are
\[
f_{\beta,m,n}^\La(X) =\begin{cases}
  \frac{1}{2^{mn/2}\Gamma_n^1(m/2)}(\det X)^{(m-n+1)/2-1}e^{-\tr(X)/2} &\text{if }\beta=1,\\[1ex]
  \frac{1}{\Gamma_n^2(m)}(\det X)^{m-n}e^{-\tr(X)} &\text{if }\beta=2,\\[1ex]
  \frac{1}{(1/2)^{2mn}\Gamma_n^4(2m)}(\det X)^{2m-2n+1}e^{-2\tr(X)} &\text{if }\beta=4,
\end{cases}
\]
and those of the Jacobi ensembles are
\[
f_{\beta,l,m,n}^\J(X) =\begin{cases}
  \frac{\Gamma_n^1((l+m)/2)}
         {\Gamma_n^1(l/2)
          \Gamma_n^1(m/2)}
    (\det X)^{(l-n+1)/2-1}
    \det(I_n-X)^{(m-n+1)/2-1} &\text{if }\beta=1,\\[2ex]
  \frac{\Gamma_n^2(l+m)}
         {\Gamma_n^2(l)
          \Gamma_n^2(m)}
    (\det X)^{l-n}
    \det(I_n-X)^{m-n} &\text{if }\beta=2,\\[2ex]
  \frac{\Gamma_n^4(2l+2m)}
         {\Gamma_n^4(2l)
          \Gamma_n^4(2m)}
    (\det X)^{2l-2n+1}
    \det(I_n-X)^{2m-2n+1} &\text{if }\beta=4.
\end{cases}
\]
Clearly the $\beta =1,2,4$ cases are each invariant under conjugation by $\O(n)$, $\U(n)$, $\Sp(n)$ respectively. Hence we deduce the following:
\begin{corollary}[Eigenvectors distributions of Laguerre and Jacobi ensembles]
Let $l \ge n$ and $m \ge n$ be positive integers. Let $\varphi_{k_1,\dots,k_p}$ be as in Definition~\ref{def:phi}. Then we have the following:
\begin{enumerate}[{label=\upshape(\roman*)}]
\item If $X\sim\loe(m,n)$ or $\joe(l,m,n)$, then $\varphi_{k_1,\dots,k_p}(X)\sim\unif\bigl(\Flag(k_1,\dots,k_p,\mathbb{R}^n)\bigr)$.
\item If $X\sim\lue(m,n)$ or $\jue(l,m,n)$, then $\varphi_{k_1,\dots,k_p}(X)\sim\unif\bigl(\Flag(k_1,\dots,k_p,\mathbb{C}^n)\bigr)$. 
\item If $X\sim\lse(m,n)$ or $\jse(l,m,n)$, then $\varphi_{k_1,\dots,k_p}(X)\sim\unif\bigl(\Flag(k_1,\dots,k_p,\mathbb{H}^n)\bigr)$.
\end{enumerate}
\end{corollary}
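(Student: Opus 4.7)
The plan is to mimic the proof of Theorem~\ref{thmfk} verbatim, with the only substantive task being to check $G$-invariance of the Laguerre and Jacobi densities. Inspecting the density formulas just displayed before the corollary, the dependence of $f_{\beta,m,n}^\La(X)$ on $X$ is entirely through $\det(X)$ and $\tr(X)$, and the dependence of $f_{\beta,l,m,n}^\J(X)$ is through $\det(X)$, $\det(I_n-X)$, and (implicitly, via support) the condition $0 \prec X \prec I_n$. Under $X \mapsto AXA^\h$ with $A \in \O(n)$, $\U(n)$, or $\Sp(n)$, both $\tr$ and $\det$ are preserved, and $\det(I_n - AXA^\h) = \det(A(I_n - X)A^\h) = \det(I_n - X)$ because $\lvert\det A\rvert = 1$ (using the Dieudonné determinant in the quaternionic case, or equivalently the fact that $\Sp(n) \subset \U(2n)$ has unit complex determinant). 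The support conditions $X \succ 0$ and $I_n - X \succ 0$ are also preserved by conjugation by a unitary. Hence $f_{\beta,m,n}^\La$ and $f_{\beta,l,m,n}^\J$ are $\O(n)$-, $\U(n)$-, or $\Sp(n)$-invariant for $\beta = 1, 2, 4$ respectively.

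Next I would verify that $\varphi_{k_1,\dots,k_p}$ is almost surely well-defined on these ensembles, i.e., that $X$ has simple spectrum with probability one. This is standard: the Laguerre and Jacobi densities are absolutely continuous with respect to Lebesgue measure on $\mathsf{S}^2(\mathbb{R}^n)$, $\mathsf{H}^2(\mathbb{C}^n)$, or $\mathsf{Q}^2(\mathbb{H}^n)$, and the locus of matrices with a repeated eigenvalue is a proper algebraic subvariety, hence of measure zero. Equivalently, this can be read off from the joint eigenvalue densities of these ensembles (which contain a Vandermonde factor), but we do not even need that level of detail.

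With these two ingredients in hand, the proof is just a rerun of Theorem~\ref{thmfk}. Let $X$ have any of the six listed distributions and let $A$ be in the corresponding compact group $G$. Since $X = Q\Lambda Q^\h$ gives $AXA^\h = (AQ)\Lambda(AQ)^\h$, equivariance
\[
\varphi_{k_1,\dots,k_p}(AXA^\h) = A\varphi_{k_1,\dots,k_p}(X)A^\h
\]
holds on the full measure set where $\varphi_{k_1,\dots,k_p}$ is defined. By the invariance established above, $X$ and $AXA^\h$ have the same law, so $\varphi_{k_1,\dots,k_p}(X)$ and $A\varphi_{k_1,\dots,k_p}(X)A^\h$ are identically distributed for every $A \in G$. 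The pushforward of $\varphi_{k_1,\dots,k_p}$ is thus a $G$-invariant probability measure on $\Flag(k_1,\dots,k_p,\mathbb{F}^n)$, and Proposition~\ref{prop:unif} identifies it with the unique uniform distribution. The real and complex cases follow from the quaternionic one by the restriction observation after Definition~\ref{def:phi}.

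The only step that is not purely routine is the $G$-invariance of the determinantal factors, and even that is essentially immediate once one recalls that $\lvert \det A \rvert = 1$ for $A$ in any of the three compact groups; no step here is a genuine obstacle, and the main point is simply to notice that the proof of Theorem~\ref{thmfk} never used anything about the Gaussian densities beyond their $G$-invariance.
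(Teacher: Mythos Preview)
Your proposal is correct and follows essentially the same approach as the paper: the paper's argument for this corollary is simply the observation that the proof of Theorem~\ref{thmfk} only used $G$-invariance of the density, together with the one-line remark that the Laguerre and Jacobi densities are ``clearly'' conjugation-invariant. You have just unpacked that ``clearly'' (and the almost-sure simplicity of the spectrum) in more detail than the paper does; the only minor quibble is that your closing appeal to the restriction observation after Definition~\ref{def:phi} is unnecessary here, since you already verified invariance separately for each $\beta$.
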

This shows that the Gaussian, Laguerre, and Jacobi ensembles differ only in their eigenvalues distributions, but have identical eigenvectors distributions.

\section{Singular vectors distributions of Ginibre ensembles}\label{sec:gini}

As we saw in the last section, the key to the results therein is to guess the ``correct'' manifold that parameterizes the eigenvectors. Once we have this, an appropriately chosen map would take the Gaussian ensemble to the uniform distribution on this manifold. For eigenvectors, the complete flag manifold is by and large the obvious candidate; furthermore, as we saw in \eqref{eq:flag} and the last column of Table~\ref{tab:rep}, it can be readily realized as a matrix submanifold. In this section, we will see that the ``correct'' matrix submanifold that parameterizes left and right singular vector pairs would not be that obvious.

From Table~\ref{tab:pdf}, the densities of Ginibre ensembles are given by
\[
f_{\beta,m,n}^\G(Y) =\begin{cases}
  (1/2\pi)^{mn/2}e^{-\tr(Y^\tp Y)/2} &\text{if }\beta=1,\\[1ex]
  (1/\pi)^{mn}e^{-\tr(Y^\h Y)} &\text{if }\beta=2,\\[1ex]
  (2/\pi)^{2mn}e^{-2\tr(Y^\h Y)} &\text{if }\beta=4.
\end{cases}
\]
For our purpose, the only point to note is that $\gio(m,n)$ is $\O(m)\times\O(n)$-invariant, $\giu(m,n)$ is $\U(m)\times\U(n)$-invariant, and $\gis(m,n)$ is $\Sp(m)\times\Sp(n)$-invariant.

Without loss of generality, we will assume that $m \ge n$ throughout this section; the $m < n$ case just follows from applying the results to $Y^\tp$ or $Y^\h$. So any $Y\sim\gis(m,n)$ has $n$ district singular values probability one and we arrange them in descending order as in Table~\ref{tab:decomp}.

\begin{proposition}[Singular vectors distributions of Ginibre ensembles I]\label{prop:sing}
Let  $0 < n \le m$ be integers and
\begin{alignat*}{2}
\psi:\mathbb{R}_\x^{m \times n} &\to [\V(n,\mathbb{R}^m)\times\O(n)]/\O(1)^n, \quad &Y &\mapsto\lb(U,V)\rb_{\O}, \\
\psi:\mathbb{C}_\x ^{m \times n} &\to [\V(n,\mathbb{C}^m)\times\U(n)]/\U(1)^n, \quad &Y &\mapsto\lb(U,V)\rb_{\U}, \\
\psi:\mathbb{H}_\x ^{m \times n} &\to [\V(n,\mathbb{H}^m)\times\Sp(n)]/\Sp(1)^n, \quad &Y &\mapsto\lb(U,V)\rb_{\Sp},
\end{alignat*}
where the equivalence classes of left and right singular vectors are given by
\begin{alignat*}{2}
\lb(U,V)\rb_{\O} &=\lb(UD,VD)\rb_{\O} \quad &\text{for any } D &\in \O(1)^n,\\
\lb(U,V)\rb_{\U}&=\lb(UD,VD)\rb_{\U} \quad &\text{for any } D &\in \U(1)^n,\\
\lb(U,V)\rb_{\Sp}&=\lb(UD,VD)\rb_{\Sp} \quad &\text{for any } D &\in \Sp(1)^n.
\end{alignat*}
Then we have the following:
\begin{enumerate}[{label=\upshape(\roman*)}]
    \item If $Y\sim\gio(m,n)$, then $\psi(Y)\sim\unif\bigl([\V(n,\mathbb{R}^m)\times\O(n)]/\O(1)^n\bigr)$.
    \item If $Y\sim\giu(m,n)$, then $\psi(Y)\sim\unif\bigl([\V(n,\mathbb{C}^m)\times\U(n)]/\U(1)^n\bigr)$.
    \item If $Y\sim\gis(m,n)$, then $\psi(Y)\sim\unif\bigl([\V(n,\mathbb{H}^m)\times\Sp(n)]/\Sp(1)^n\bigr)$.
\end{enumerate}
\end{proposition}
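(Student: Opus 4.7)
The plan is to mirror the proof of Theorem~\ref{thmfk}, with the flag manifold replaced by the quotient $[\V(n,\mathbb{H}^m)\times\Sp(n)]/\Sp(1)^n$ and the conjugation action of $\Sp(n)$ replaced by the bi-action of $\Sp(m)\times\Sp(n)$ on $\mathbb{H}^{m\times n}$ given by $(A,B)\cdot Y \coloneqq AYB^\h$. As in Section~\ref{sec:gauss}, I would prove only the quaternionic case and obtain the real and complex cases by restriction along $\mathbb{R}^{m\times n}\subseteq\mathbb{C}^{m\times n}\subseteq\mathbb{H}^{m\times n}$.

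First I would show that $\psi$ is well-defined. Given $Y\in\mathbb{H}_\x^{m\times n}$ with two SVDs $Y = U_1\Sigma V_1^\h = U_2\Sigma V_2^\h$, distinctness of the singular values means that $v_{1,i}$ and $v_{2,i}$ span the same one-dimensional quaternionic right eigenspace of $Y^\h Y$ for each $i$, so $v_{2,i} = v_{1,i}d_i$ for some $d_i\in\Sp(1)$. Applying $Y$ and using $Yv_{\cdot,i}=\sigma_iu_{\cdot,i}$, together with the fact that the real scalar $\sigma_i$ commutes with the quaternion $d_i$, yields $u_{2,i}=u_{1,i}d_i$ with the \emph{same} $d_i$. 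Hence $(U_2,V_2)=(U_1D,V_1D)$ for $D=\diag(d_1,\dots,d_n)\in\Sp(1)^n$, which is exactly the equivalence relation in the statement.

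Next I would verify the homogeneous-space setup. The left action $(A,B)\cdot(U,V)=(AU,BV)$ of $\Sp(m)\times\Sp(n)$ on $\V(n,\mathbb{H}^m)\times\Sp(n)$ commutes with the diagonal right $\Sp(1)^n$-action, so it descends to the quotient; it is transitive by Proposition~\ref{sti} applied to the Stiefel factor together with left multiplication on $\Sp(n)$. The $\Sp(1)^n$-action is free because $UD=U$ for $U\in\V(n,\mathbb{H}^m)$ forces $D=I$ via $U^\h U=I$, so the quotient is a homogeneous space of the compact group $\Sp(m)\times\Sp(n)$ and Proposition~\ref{prop:unif} applies. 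Equivariance of $\psi$ is immediate from $AYB^\h=(AU)\Sigma(BV)^\h$ being itself an SVD, so $\psi(AYB^\h)=(A,B)\cdot\psi(Y)$. Finally, since $f_{\beta,m,n}^\G$ depends on $Y$ only through $\tr(Y^\h Y)$ and this quantity is preserved by $Y\mapsto AYB^\h$, the ensemble $\gis(m,n)$ is $\Sp(m)\times\Sp(n)$-invariant; hence $Y$ and $AYB^\h$ are identically distributed and so are $\psi(Y)$ and $(A,B)\cdot\psi(Y)$. The uniqueness clause of Proposition~\ref{prop:unif} then identifies $\psi(Y)$ with $\unif\bigl([\V(n,\mathbb{H}^m)\times\Sp(n)]/\Sp(1)^n\bigr)$.

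The main obstacle is the well-definedness step. For the eigenvector case in Theorem~\ref{thmfk}, each simple eigenvector has an \emph{independent} $\Sp(1)$ scaling indeterminacy, which is why the quotient there is by the full $\Sp(1)^n$ acting only on one factor. For singular vectors, the scaling indeterminacies of the $i$-th left and right vectors are \emph{coupled} and must be shown to reduce to a single common unit quaternion $d_i$; this is what justifies quotienting by the \emph{diagonal} copy of $\Sp(1)^n$ in $\Sp(1)^n\times\Sp(1)^n$, and it is also what prevents the parameter space from being the naive product $\V(n,\mathbb{H}^m)\times\Sp(n)$.
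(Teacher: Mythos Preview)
Your proposal is correct and follows essentially the same approach as the paper: use the $\Sp(m)\times\Sp(n)$-invariance of the Ginibre density, the equivariance of $\psi$ under the bi-action, and the uniqueness clause of Proposition~\ref{prop:unif} applied to the quotient viewed as a homogeneous space $[\Sp(m)\times\Sp(n)]/[\Sp(m-n)\times\Sp(1)^n]$. The paper's proof is terser and does not spell out the well-definedness argument or the coupling of the left and right $\Sp(1)$-ambiguities, so your extra detail there is a genuine addition rather than a deviation.
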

\begin{proof}
    For any $(A, B) \in\Sp(m) \times \Sp(n)$, $Y$ and $AYB^\h$ are identically distributed. So the equivalence classes $\lb(U,V)\rb_{\Sp}$ and $\lb(AU,BV)\rb_{\Sp}$ are also identically distributed. Since
\[
[\V(n,\mathbb{H}^m)\times\Sp(n)]/\Sp(1)^n \cong [\Sp(m)\times\Sp(n)] / [\Sp(m-n) \times \Sp(1)^n],
\]
it follows from Proposition~\ref{prop:unif} that $\psi(Y)\sim\unif\bigl([\V(n,\mathbb{H}^m)\times\Sp(n)]/\Sp(1)^n\bigr)$. The $\mathbb{R}$ and $\mathbb{C}$  cases are specializations of the result for $\mathbb{H}$.
\end{proof}
There is in fact another isomorphism,
\begin{equation}\label{eq:iso1}
[\V(n,\mathbb{H}^m)\times\Sp(n)]/\Sp(1)^n\cong\V(n,\mathbb{H}^m)\times\Flag(\mathbb{H}^n),
\end{equation}
embedding the space of (equivalence classes of) left and right singular vectors as a matrix submanifold in $\mathbb{H}^{n \times (k + n)}$, and similarly over $\mathbb{R}$ and $\mathbb{C}$. This characterization allows us to avoid quotient spaces and equivalence classes altogether. We regard the following as our main result of this section.
\begin{theorem}[Singular vectors distributions of Ginibre ensembles II]\label{thm:gini}
Let  $0 < n \le m$ be integers and
\begin{alignat*}{2}
\xi:\mathbb{R}_\x ^{m \times n}&\to\V(n,\mathbb{R}^m)\times\Flag(\mathbb{R}^n), \quad &Y&\mapsto(UV^\tp,V\Delta_aV^\tp), \\
\xi:\mathbb{C}_\x ^{m \times n}&\to\V(n,\mathbb{C}^m)\times\Flag(\mathbb{C}^n), \quad &Y&\mapsto(UV^\h,V\Delta_aV^\h), \\
\xi:\mathbb{H}_\x ^{m \times n}&\to\V(n,\mathbb{H}^m)\times\Flag(\mathbb{H}^n), \quad &Y&\mapsto(UV^\h,V\Delta_aV^\h).
\end{alignat*}
Then we have the following:
\begin{enumerate}[{label=\upshape(\roman*)}]
    \item If $Y\sim\gio(m,n)$, then $\xi(Y)\sim\unif\bigl(\V(n,\mathbb{R}^m)\times\Flag(\mathbb{R}^n)\bigr)$.
    \item If $Y\sim\giu(m,n)$, then $\xi(Y)\sim\unif\bigl(\V(n,\mathbb{C}^m)\times\Flag(\mathbb{C}^n)\bigr)$.
    \item If $Y\sim\gis(m,n)$, then $\xi(Y)\sim\unif\bigl(\V(n,\mathbb{H}^m)\times\Flag(\mathbb{H}^n)\bigr)$.
    \end{enumerate}
\end{theorem}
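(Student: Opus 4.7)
The plan is to factor $\xi = \phi\circ\psi$, where $\psi$ is the map from Proposition~\ref{prop:sing} and
\[
\phi:[\V(n,\mathbb{H}^m)\times\Sp(n)]/\Sp(1)^n \longrightarrow \V(n,\mathbb{H}^m)\times\Flag(\mathbb{H}^n),\qquad \lb(U,V)\rb_{\Sp}\longmapsto (UV^h,\,V\Delta_a V^h)
\]
is an explicit realization of the isomorphism~\eqref{eq:iso1}. Theorem~\ref{thm:gini} will then reduce to showing that $\phi$ is an $\Sp(m)\times\Sp(n)$-equivariant bijection, so that the uniform distribution on the source given by Proposition~\ref{prop:sing} is pushed forward to the unique invariant distribution on the target. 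As in the proof of Theorem~\ref{thmfk}, we only argue the quaternionic case; the real and complex cases follow by restriction since $\mathbb{R}\subset\mathbb{C}\subset\mathbb{H}$.

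We would execute this in four steps. First, well-definedness: for any diagonal $D\in\Sp(1)^n$, both $UD(VD)^h=UV^h$ and $VD\Delta_a(VD)^h=V\Delta_a V^h$, the latter because the unit-quaternion diagonal $D$ commutes with the real diagonal $\Delta_a$. Second, bijectivity: the equation $V_1\Delta_a V_1^h=V_2\Delta_a V_2^h$ forces $V_1^h V_2$ to commute with $\Delta_a$, and distinctness of $a_1,\dots,a_n$ then yields $V_1^h V_2\in\Sp(1)^n$; matching the first components pins down the equivalence class, giving injectivity. For surjectivity, given $(X,F)$, choose any $V\in\Sp(n)$ with $V\Delta_a V^h=F$ and set $U\coloneqq XV$, which satisfies $U^h U = V^h X^h X V = I$ and therefore lies in $\V(n,\mathbb{H}^m)$. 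Third, equivariance: on $\V(n,\mathbb{H}^m)\times\Flag(\mathbb{H}^n)$ consider the action $(A,B)\cdot(X,F)\coloneqq(AXB^h,\,BFB^h)$; because $A(UV^h)B^h=(AU)(BV)^h$, $\phi$ intertwines this with $\lb(U,V)\rb_{\Sp}\mapsto\lb(AU,BV)\rb_{\Sp}$ on the quotient. Fourth, conclusion: the twisted action above is transitive with stabilizer $\Sp(m-n)\times\Sp(1)^n$ at $(I_{m,n},\Delta_a)$, so by Proposition~\ref{prop:unif} there is a unique $\Sp(m)\times\Sp(n)$-invariant probability distribution on $\V(n,\mathbb{H}^m)\times\Flag(\mathbb{H}^n)$; equivariance of $\phi$ transports the uniform distribution from the source to this unique invariant, which we identify with $\unif(\V(n,\mathbb{H}^m))\otimes\unif(\Flag(\mathbb{H}^n))$.

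The main obstacle is this last identification. Unlike the flag-manifold setting of Section~\ref{sec:gauss}, the symmetry group $\Sp(m)\times\Sp(n)$ here does not act diagonally on the two factors but via the twisted rule $(A,B)\cdot(X,F)=(AXB^h,BFB^h)$, in which $\Sp(n)$ acts on $\V(n,\mathbb{H}^m)$ by right multiplication. To verify that $\unif(\V)\otimes\unif(\Flag)$ is preserved, one observes that right multiplication $X\mapsto XB^h$ commutes with the transitive left $\Sp(m)$-action on $\V(n,\mathbb{H}^m)$; hence its pushforward of $\unif(\V)$ is still $\Sp(m)$-invariant and equals $\unif(\V)$ by uniqueness. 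Combined with $\Sp(n)$-invariance of $\unif(\Flag)$ under conjugation and independence of the two factors, this yields the required invariance of the product measure and completes the argument.
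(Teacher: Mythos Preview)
Your proof is correct and follows the same overall strategy as the paper: show that $\xi$ is well-defined on the generic locus, observe that the $\Sp(m)\times\Sp(n)$-invariance of the Ginibre law is inherited by the pushforward, and invoke Proposition~\ref{prop:unif}. The paper's own argument is much terser---it does not factor through Proposition~\ref{prop:sing} but works with $\xi$ directly, and it simply asserts that invariance ``passes on'' to $\xi(Y)$ and that uniqueness then forces the uniform distribution.

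Where your version genuinely adds value is in two places the paper glosses over. First, you explicitly construct and verify the isomorphism~\eqref{eq:iso1} (injectivity via the centralizer of $\Delta_a$, surjectivity by lifting $F$ to $V$ and setting $U=XV$); the paper merely states this isomorphism without proof. Second, and more importantly, you identify and resolve the subtlety that $\xi$ is equivariant only for the \emph{twisted} action $(A,B)\cdot(X,F)=(AXB^{\h},BFB^{\h})$, not the factorwise product action, so one must check that the unique twisted-invariant probability coincides with $\unif(\V)\otimes\unif(\Flag)$. Your argument---that right multiplication on $\V(n,\mathbb{H}^m)$ commutes with the transitive left $\Sp(m)$-action and hence preserves $\unif(\V)$---is the cleanest way to close this gap. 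The paper's proof leaves this step implicit.
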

\begin{proof}
With probability one, the singular value decomposition is unique modulo the equivalence relation in Proposition~\ref{prop:sing}, i.e., $\lb(U,V)\rb_{\Sp}=\lb(UD,VD)\rb_{\Sp}$ for any $D\in\Sp(1)^n$. Since $UDD^\h V^\h=UV^\h$ and $V\Delta_aV^\h=VD\Delta_{a}D^\h V^\h$, we always have a well-defined $\xi(Y)$. As in all earlier proofs, it remains to observe that the $\Sp(m)\times\Sp(n)$-invariance of $Y \sim\gis(m,n)$ passes on to $\xi(Y)$, forcing the distribution of $\xi(Y)$ to be the uniform distribution on $\V(n,\mathbb{H}^m)\times\Flag(\mathbb{H}^n)$, again a consequence of the uniqueness in Proposition~\ref{prop:unif}. The $\mathbb{R}$ and $\mathbb{C}$  cases are specializations of the result for $\mathbb{H}$.
\end{proof}
It follows immediately from Theorem~\ref{thm:gini} that we may obtain the uniform distribution on a Stiefel manifold by projecting the image of $\xi$.
\begin{corollary}[Uniform sampling on Stiefel manifold]\label{thm:rho}
Let $0 < k < n$ be integers and
\begin{alignat*}{2}
\rho :\mathbb{R}^{n\times k}_{\x} &\to\V(k,\mathbb{R}^n), \quad & Y &\mapsto UV^\tp, \\
\rho :\mathbb{C}^{n\times k}_{\x} &\to\V(k,\mathbb{C}^n), \quad & Y &\mapsto UV^\h, \\
\rho :\mathbb{H}^{n\times k}_{\x} &\to\V(k,\mathbb{H}^n), \quad & Y &\mapsto UV^\h.
\end{alignat*}
Then we have the following:
    \begin{enumerate}[{label=\upshape(\roman*)}]
        \item  If $Y \sim \gio(n,k)$, then $\rho(Y)\sim\unif\bigl(\V(k,\mathbb{R}^n)\bigr)$.
        \item  If $Y \sim \giu(n,k)$, then $\rho(Y)\sim\unif\bigl(\V(k,\mathbb{C}^n)\bigr)$.
        \item  If $Y \sim \gis(n,k)$, then $\rho(Y)\sim\unif\bigl(\V(k,\mathbb{H}^n)\bigr)$.
    \end{enumerate}    
\end{corollary}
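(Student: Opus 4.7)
The plan is to derive this as an immediate projection of Theorem~\ref{thm:gini}. After relabeling the parameters of that theorem by replacing its $m$ with $n$ and its $n$ with $k$, it asserts that for $Y \sim \gio(n,k)/\giu(n,k)/\gis(n,k)$ the random matrix $\xi(Y) = (UV^\h, V\Delta_a V^\h)$ is uniformly distributed on $\V(k, \mathbb{F}^n) \times \Flag(\mathbb{F}^k)$, where $\mathbb{F} = \mathbb{R}, \mathbb{C}, \mathbb{H}$. Since $\rho(Y) = UV^\h$ is precisely the first component of $\xi(Y)$, it suffices to observe that the $G_1$-marginal of the $(G_1 \times G_2)$-invariant uniform distribution on $G_1/H_1 \times G_2/H_2$ is the uniform distribution on $G_1/H_1$, which follows either from Proposition~\ref{prop:unif} applied to the factor $G_1/H_1$ or from the product structure of Haar measures on compact groups.

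Alternatively, and perhaps more transparently, I would mimic the proof of Theorem~\ref{thm:gini} directly. The density of $\gis(n,k)$ is invariant under left multiplication by any $A \in \Sp(n)$, and if $Y = U\Sigma V^\h$ is an SVD, then $AY = (AU)\Sigma V^\h$ is again an SVD, so $\rho(AY) = (AU)V^\h = A\rho(Y)$. Hence $\rho(Y)$ and $A\rho(Y)$ are identically distributed for every $A \in \Sp(n)$. Since $\V(k, \mathbb{H}^n) \cong \Sp(n)/\Sp(n-k)$ by Proposition~\ref{sti}, Proposition~\ref{prop:unif} pins this distribution down as the uniform one. The $\mathbb{R}$ and $\mathbb{C}$ cases follow by the same argument with $\O(n)$ and $\U(n)$, or by restriction as used throughout the paper.

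A few preliminaries need to be justified before invoking either argument, but none of them is a serious obstacle. First, $\rho$ is well defined on $\mathbb{H}^{n \times k}_\x$ despite the non-uniqueness of the SVD: the residual indeterminacy $(U, V) \mapsto (UD, VD)$ for $D \in \Sp(1)^k$ leaves the product unchanged, since $(UD)(VD)^\h = UDD^\h V^\h = UV^\h$. Second, the image lies in $\V(k, \mathbb{H}^n)$ because $(UV^\h)^\h(UV^\h) = V(U^\h U)V^\h = V V^\h = I_k$, using that $U$ has orthonormal columns and $V \in \Sp(k)$ is a quaternionic unitary. The only conceptual point worth highlighting is that one must form the product $UV^\h$, rather than $U$ alone, in order to obtain a well-defined map into $\V(k, \mathbb{F}^n)$ whose distribution can then be identified by invariance.
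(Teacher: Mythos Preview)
Your proposal is correct and your first approach is exactly what the paper does: the paper states in one sentence that the corollary follows immediately from Theorem~\ref{thm:gini} by projecting the image of $\xi$ onto its first component. You have supplied more detail than the paper (the marginal argument, well-definedness of $\rho$, and the image check), and your alternative direct-invariance argument is also valid, but the core idea is identical.
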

    
\section{Cosine-sine vectors distributions of circular ensembles}\label{sec:circ}

Evidently, the uniform distributions in \eqref{eq:circ} fall into two types most clearly seen from the two hierarchies
\begin{equation}\label{eq:include}
\O(n) \subseteq \U(n) \subseteq \Sp(n) \qquad\text{and}\qquad \LGr(\mathbb{R}^{2n}) \subseteq \LGr(\mathbb{C}^{2n}) \subseteq \LGr(\mathbb{H}^{2n}).
\end{equation}
Here we will discuss the first trio of circular ensembles, with the second deferred to Section~\ref{sec:circ2}.
Our commentary in the beginning of Section~\ref{sec:gini} is particularly apt for these circular ensembles --- figuring out the ``correct'' matrix submanifold that parameterizes the left and right cosine-sine vectors pairs is the trickiest part of this section.

Circular ensembles are uniform distributions and thus their probability densities are constant functions entirely determined by their respective volume in Table~\ref{tab:vol}. Explicitly, the densities for $\cre(n)$, $\cue(n)$, $\cqe(n)$ are
\begin{equation}\label{eq:fcirc}
f_{\beta,n}^\C(Q) =\begin{cases}
  \frac{\gamma(1,n,1)}{\theta(2n,n(n+1))} &\text{if }\beta=1,\\[1ex]
  \frac{\gamma(1,n,2)}{\sqrt{n}\theta(n+1,2n(n+1))} &\text{if }\beta=2,\\[1ex]
  \frac{\gamma(1,n,4)}{\theta(2n,4n(n+1))} &\text{if }\beta=4,
\end{cases}
\end{equation}
with respect to the Riemannian volume form of $\O(n)$, $\U(n)$, and $\Sp(n)$ respectively.
 
\begin{proposition}[Cosine-sine vectors distributions of circular ensembles I]\label{prop:eta}
Let $0 < k < n$ be integers and
\begin{alignat*}{2}
\eta:\O_\x (n)&\to\bigl(\O(k)^2\times\O(n-k)^2\bigr)/\bigl(\O(1)^k\times\O(n-2k)\bigr), \quad &Q&\mapsto\lb(U_1,V_1,U_2,V_2)\rb_{\O}, \\
\eta:\U_\x (n)&\to\bigl(\U(k)^2\times\U(n-k)^2\bigr)/\bigl(\U(1)^k\times\U(n-2k)\bigr), \quad &Q&\mapsto\lb(U_1,V_1,U_2,V_2)\rb_{\U}, \\
\eta:\Sp_\x (n)&\to\bigl(\Sp(k)^2\times\Sp(n-k)^2\bigr)/\bigl(\Sp(1)^k\times\Sp(n-2k)\bigr), \quad &Q&\mapsto\lb(U_1,V_1,U_2,V_2)\rb_{\Sp},
\end{alignat*}
where the equivalence classes of left and right cosine-sine vectors are defined by
{\small
\begin{alignat*}{2}
\lb(U_1,V_1,U_2,V_2)\rb_{\O} & =\lb(U_1D,V_1D,U_2\diag(D,W),V_2\diag(D,W))\rb_{\O},\quad &  D &\in\O(1)^k, \; W\in\O(n-2k),\\
\lb(U_1,V_1,U_2,V_2)\rb_{\U} &=\lb(U_1D,V_1D,U_2\diag(D,W),V_2\diag(D,W))\rb_{\U},\quad &   D &\in\U(1)^k, \; W\in\U(n-2k),\\
\lb(U_1,V_1,U_2,V_2)\rb_{\Sp}&=\lb(U_1D,V_1D,U_2\diag(D,W),V_2\diag(D,W))\rb_{\Sp},\quad &   D &\in\Sp(1)^k, \; W\in\Sp(n-2k).
\end{alignat*}}%
Then we have the following:
\begin{enumerate}[{label=\upshape(\roman*)}]
    \item If $Q\sim\cre(n)$, then $\eta(Q)\sim\unif\bigl([\O(k)^2\times\O(n-k)^2]/[\O(1)^k\times\O(n-2k)]\bigr)$.
    \item If $Q\sim\cue(n)$, then $\eta(Q)\sim\unif\bigl([\U(k)^2\times\U(n-k)^2]/[\U(1)^k\times\U(n-2k)]\bigr)$.
    \item If $Q\sim\cqe(n)$, then $\eta(Q)\sim\unif\bigl([\Sp(k)^2\times\Sp(n-k)^2]/[\Sp(1)^k\times\Sp(n-2k)]\bigr)$.
    \end{enumerate}
\end{proposition}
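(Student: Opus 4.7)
My plan follows the template established in Theorem~\ref{thmfk} and Proposition~\ref{prop:sing}: pin down the well-definedness of $\eta$, verify $G$-equivariance for an appropriate compact group $G$, apply the bi-invariance of $\cqe(n)$ as Haar measure, then invoke Proposition~\ref{prop:unif}. As in earlier sections, I will carry out the argument over $\mathbb{H}$; the $\mathbb{R}$ and $\mathbb{C}$ cases follow by restriction.

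For well-definedness, I need to verify that the stated equivalence relation exactly captures the non-uniqueness of the cosine-sine decomposition on $\Sp_\x(n)$. Suppose two decompositions $\diag(U_1, U_2)\,\Sigma\,\diag(V_1^\h, V_2^\h) = \diag(U_1', U_2')\,\Sigma\,\diag(V_1'^\h, V_2'^\h)$ represent the same $Q \in \Sp_\x(n)$, where $\Sigma$ denotes the middle CS-factor from Table~\ref{tab:decomp}. Setting $D_i \coloneqq U_i^\h U_i'$ and $E_i \coloneqq V_i^\h V_i'$ and rearranging yields $\diag(D_1, D_2)\,\Sigma = \Sigma\,\diag(E_1, E_2)$. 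Partitioning $D_2$ and $E_2$ conformally with the $(k, n-2k)$ block structure on the bottom half of $\Sigma$, a block-by-block comparison using the distinctness and positivity of the $c_i$ and $s_i$ forces $D_1 = E_1 = D$ to be diagonal in $\Sp(1)^k$, while $D_2 = E_2 = \diag(D, W)$ for the same $D$ and an arbitrary $W \in \Sp(n-2k)$ absorbed by the $I_{n-2k}$ block. This is exactly the stated equivalence.

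Next, I set $G \coloneqq \Sp(k)^2 \times \Sp(n-k)^2$ and let $G$ act on $\Sp_\x(n)$ by $(A_1, B_1, A_2, B_2) \cdot Q \coloneqq \diag(A_1, A_2)\,Q\,\diag(B_1^\h, B_2^\h)$. This action preserves cosine-sine values, and substituting into the CS decomposition of $Q$ shows that the cosine-sine vectors transform as $(U_1, V_1, U_2, V_2) \mapsto (A_1 U_1, B_1 V_1, A_2 U_2, B_2 V_2)$. Embedding $H \coloneqq \Sp(1)^k \times \Sp(n-2k)$ into $G$ via $(D, W) \mapsto (D, D, \diag(D, W), \diag(D, W))$ makes this precisely the left $G$-action on $G/H$, so $\eta$ is $G$-equivariant. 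Since $\cqe(n) = \unif(\Sp(n))$ is the bi-invariant Haar measure, it is invariant under the block-diagonal embedding $G \hookrightarrow \Sp(n) \times \Sp(n)$, $(A_1, B_1, A_2, B_2) \mapsto (\diag(A_1, A_2), \diag(B_1, B_2))$. Combining, $\eta(Q)$ has a $G$-invariant distribution on the compact homogeneous space $G/H$, and Proposition~\ref{prop:unif} identifies it as $\unif(G/H)$.

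The main obstacle is the blockwise uniqueness analysis in the first step. Although folklore over $\mathbb{R}$ and $\mathbb{C}$ (see \cite[Section~2.6]{GV}), the quaternionic version needs to be written out carefully because $\Sp(1)^k$ is non-commutative: the conclusion $D_1 \in \Sp(1)^k$ relies specifically on the reality (hence centrality in $\mathbb{H}$) of the $c_i$ and $s_i$, rather than on any commutativity of the ambient scalars. Once the non-uniqueness structure is pinned down, the remaining equivariance-plus-uniqueness argument follows the same pattern used throughout Sections~\ref{sec:gauss} and \ref{sec:gini}.
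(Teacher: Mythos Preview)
Your proof is correct and follows essentially the same approach as the paper: establish that the action of $G=\Sp(k)^2\times\Sp(n-k)^2$ via $(A_1,B_1,A_2,B_2)\cdot Q=\diag(A_1,A_2)\,Q\,\diag(B_1^\h,B_2^\h)$ preserves the Haar law of $Q$, deduce $G$-invariance of the pushforward $\eta(Q)$, and conclude via Proposition~\ref{prop:unif}. The paper's own proof is terser in that it simply takes the equivalence relation in the statement as given and does not carry out your blockwise uniqueness analysis; your added verification that the stated $\Sp(1)^k\times\Sp(n-2k)$ ambiguity is \emph{exactly} the non-uniqueness of the CS decomposition on $\Sp_\x(n)$ is a genuine strengthening, and your remark that the argument relies on the reality (hence centrality) of the $c_i,s_i$ rather than commutativity of scalars is the right observation for the quaternionic case.
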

\begin{proof}
We remind the reader of the cosine-sine decomposition in Table~\ref{tab:decomp}: For any $k \le n/2$, a matrix $Q \in \Sp(n)$ may be decomposed as
\[
Q=\begin{bmatrix}
    U_1&0\\
    0&U_2
\end{bmatrix}\begin{bmatrix}
    C&S&0\\
    -S&C&0\\
    0&0&I_{n-2k}
\end{bmatrix}\begin{bmatrix}
    V_1^\h&0\\
    0&V_2^\h
\end{bmatrix},
\]
where $U_1, V_1\in \Sp(k)$, $U_2, V_2 \in \Sp(n-k)$, and $C=\diag(c_1,\dots,c_k), S=\diag(s_1,\dots,s_k)$ with $c_i,s_i \in [0,1]$, $c_i^2+s_i^2=1$ for all $i=1,\dots,k$.

For a random matrix $Q\sim\cqe(n)$, $U_1,V_1,U_2,V_2$ are all random matrices and, with probability one, we may  assume that $c_1 < \dots < c_k$ and $s_1 > \dots > s_k$. For any $A_1,B_1\in\Sp(k)$, $A_2,B_2\in\Sp(n-k)$, the random matrix $\diag(A_1,A_2)Q\diag(B_1^\h,B_2^\h)$ is identically distributed as $Q$. It follows that cosine-sine vectors $\lb(A_1U_1,B_1V_1,A_2U_2,B_2V_2)\rb_{\Sp}$ are identically distributed as $\lb(U_1,V_1,U_2,V_2)\rb_{\Sp}$. Since $(A_1, A_2, B_1, B_2) \in \Sp(k)\times\Sp(k)\times\Sp(n-k)\times\Sp(n-k)$, this shows that the cosine-sine vectors distribution on the space
\[
[\Sp(k)\times\Sp(k)\times\Sp(n-k)\times\Sp(n-k)]/[\Sp(1)^k\times\Sp(n-2k)],
\]
is invariant under the action of $\Sp(k)\times\Sp(k)\times\Sp(n-k)\times\Sp(n-k)$. By Proposition~\ref{prop:unif}, there is only one such distribution, implying that
\[
\eta(Q)\sim\unif\bigl([\Sp(k)\times\Sp(k)\times\Sp(n-k)\times\Sp(n-k)]/[\Sp(1)^k\times\Sp(n-2k)]\bigr).
\]
The $\mathbb{R}$ and $\mathbb{C}$  cases are specializations of the result for $\mathbb{H}$.
\end{proof}
As we did with \eqref{eq:iso1} in the last section, here we would also like our parameter space for left and right cosine-sine vector pairs to be a matrix submanifold, with points that are actual matrices instead of equivalence classes. We accomplish this with the isomorphism
\[
[\Sp(k)^2\times\Sp(n-k)^2]/[\Sp(1)^k\times\Sp(n-2k)]\cong\Sp(k)^2\times\Sp(n-k)\times\Flag(1,\dots,k,\mathbb{H}^{n-k}),
\]
and with it we arrive at our main result of this section.

\begin{theorem}[Cosine-sine vectors distributions of circular ensembles II]\label{thm:zeta}
Let $0 < k < n$ be integers and
{\small
\begin{alignat*}{2}
\zeta:\O_\x (n)&\to\O(k)^2\times\O(n-k)\times\Flag(1,\dots,k,\mathbb{R}^{n-k}), \quad &Q&\mapsto(U_1(V_2)_{kk}^\tp,V_1(V_2)_{kk}^\tp,U_2V_2^\tp,V_2\Delta_aV_2^\tp), \\
\zeta:\U_\x (n)&\to\U(k)^2\times\U(n-k)\times\Flag(1,\dots,k,\mathbb{C}^{n-k}), \quad &Q&\mapsto(U_1(V_2)_{kk}^\h,V_1(V_2)_{kk}^\h,U_2V_2^\h,V_2\Delta_aV_2^\h), \\
\zeta:\Sp_\x (n)&\to\Sp(k)^2\times\Sp(n-k)\times\Flag(1,\dots,k,\mathbb{H}^{n-k}), \quad &Q&\mapsto(U_1(V_2)_{kk}^\h,V_1(V_2)_{kk}^\h,U_2V_2^\h,V_2\Delta_aV_2^\h),
\end{alignat*}}%
where $(V_2)_{kk}$ denotes the $k \times k$ leading principal submatrix of $V_2$. Then we have the following:
\begin{enumerate}[{label=\upshape(\roman*)}]
    \item If $Q\sim\cre(n)$, then $\zeta(Q)\sim\unif\bigl(\O(k)^2\times\O(n-k)\times\Flag(1,\dots,k,\mathbb{R}^{n-k})\bigr)$.
    \item If $Q\sim\cue(n)$, then $\zeta(Q)\sim\unif\bigl(\U(k)^2\times\U(n-k)\times\Flag(1,\dots,k,\mathbb{C}^{n-k})\bigr)$.
    \item If $Q\sim\cqe(n)$, then $\zeta(Q)\sim\unif\bigl(\Sp(k)^2\times\Sp(n-k)\times\Flag(1,\dots,k,\mathbb{H}^{n-k})\bigr)$.
    \end{enumerate}
\end{theorem}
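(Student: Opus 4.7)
The strategy mirrors those of Theorem~\ref{thmfk} and Theorem~\ref{thm:gini}: realize $\zeta$ as the composition $\Phi\circ\eta$, where $\eta$ is the quotient map of Proposition~\ref{prop:eta} and
\[
\Phi:[\Sp(k)^2\times\Sp(n-k)^2]/[\Sp(1)^k\times\Sp(n-2k)]\xrightarrow{\cong}\Sp(k)^2\times\Sp(n-k)\times\Flag(1,\dots,k,\mathbb{H}^{n-k})
\]
is an $\Sp(k)^2\times\Sp(n-k)^2$-equivariant diffeomorphism, then push the uniform distribution of Proposition~\ref{prop:eta} through $\Phi$. The real and complex cases will be obtained by restriction from the quaternionic case, as in earlier sections.

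First I would check that each coordinate of $\zeta$ is invariant under the equivalence defining $\eta$, namely $(U_1,V_1,U_2,V_2)\sim(U_1D,V_1D,U_2\diag(D,W),V_2\diag(D,W))$ for $D\in\Sp(1)^k$ and $W\in\Sp(n-2k)$. A direct block computation shows the $k\times k$ leading principal submatrix transforms as $(V_2)_{kk}\mapsto(V_2)_{kk}D$. Hence $U_1(V_2)_{kk}^\h\mapsto U_1DD^\h(V_2)_{kk}^\h=U_1(V_2)_{kk}^\h$, and likewise for $V_1(V_2)_{kk}^\h$. Similarly $U_2V_2^\h\mapsto U_2\diag(D,W)\diag(D,W)^\h V_2^\h=U_2V_2^\h$, and $V_2\Delta_aV_2^\h$ is unchanged because $\diag(D,W)$ commutes with $\Delta_a=\diag(a_1,\dots,a_k,a_{k+1}I_{n-2k})$ (quaternionic scalars in $\Sp(1)$ commute with the real diagonal, and $\Sp(n-2k)$ commutes with $a_{k+1}I_{n-2k}$).

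The main obstacle is the construction of $\Phi$. The last two factors are essentially forced: $V_2\mapsto V_2\Delta_aV_2^\h$ descends to a bijection from $\Sp(n-k)/[\Sp(1)^k\times\Sp(n-2k)]$ onto $\Flag(1,\dots,k,\mathbb{H}^{n-k})$ via the matrix submanifold representation in Table~\ref{tab:rep}, since the centralizer of $\Delta_a$ in $\Sp(n-k)$ is exactly $\Sp(1)^k\times\Sp(n-2k)$, and $U_2V_2^\h$ furnishes the $\Sp(n-k)$ coordinate unambiguously. The delicate part concerns the first two: one must show that a representative $V_2$ can be chosen within its $\Sp(1)^k\times\Sp(n-2k)$-orbit so that $(V_2)_{kk}\in\Sp(k)$, allowing $U_1(V_2)_{kk}^\h$ and $V_1(V_2)_{kk}^\h$ to genuinely sit in $\Sp(k)$. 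The plan is to carry this out by a polar-type normalization exploiting the residual $\Sp(1)^k$-freedom on a dense open set of $\Sp_\x(n)$, then verify smoothness and invertibility via a dimension count against Table~\ref{tab:vol}; equivariance under the natural $\Sp(k)^2\times\Sp(n-k)^2$-action on both sides is automatic by construction.

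With $\Phi$ in place, $\zeta=\Phi\circ\eta$, and Proposition~\ref{prop:eta} together with equivariance of $\Phi$ gives $\zeta(Q)\sim\unif\bigl(\Sp(k)^2\times\Sp(n-k)\times\Flag(1,\dots,k,\mathbb{H}^{n-k})\bigr)$. Equivalently, one can argue directly: the $\Sp(k)^2\times\Sp(n-k)^2$-invariance of $\cqe(n)$ under $Q\mapsto\diag(A_1,A_2)Q\diag(B_1,B_2)^\h$ transports to invariance of the distribution of $\zeta(Q)$ under the induced action on the product target, and Proposition~\ref{prop:unif} pins this down as the unique invariant probability measure, namely the uniform one. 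Parts (i) and (ii) follow by specializing to real and complex entries, exactly as in the proofs of Theorems~\ref{thmfk} and \ref{thm:gini}.
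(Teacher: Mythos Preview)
Your final-paragraph direct argument is the paper's proof: well-definedness of $\zeta$ modulo the $\Sp(1)^k\times\Sp(n-2k)$-ambiguity, then invariance inherited from $\cqe(n)$ and Proposition~\ref{prop:unif}. Your explicit check that each component is representative-independent is more careful than what the paper records, but the route is otherwise identical.

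Where you go further is in flagging the first two components as ``delicate,'' and here there is a genuine gap---but your proposed polar-type normalization cannot close it. You yourself compute $(V_2)_{kk}\mapsto(V_2)_{kk}D$ under the residual $\Sp(1)^k$-action; since $D^\h D=I$, the requirement $((V_2)_{kk}D)^\h((V_2)_{kk}D)=I$ is equivalent to $(V_2)_{kk}^\h(V_2)_{kk}=I$, independently of $D$. For generic $V_2\in\Sp(n-k)$ with $k<n-k$ the leading $k\times k$ block is \emph{not} in $\Sp(k)$: already over $\mathbb{R}$ with $n=3$, $k=1$, the $(1,1)$ entry of a generic $V_2\in\O(2)$ has modulus strictly less than $1$. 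Hence $U_1(V_2)_{kk}^\h$ need not lie in $\Sp(k)$, and $\zeta$ as written does not land in the stated codomain. The same obstruction breaks the clean equivariance you want, since $V_2\mapsto B_2V_2$ sends $(V_2)_{kk}$ to $(B_2V_2)_{kk}$, which is not related to $(V_2)_{kk}$ by a right $\Sp(k)$-multiplication. The paper's own proof does not confront this point either; the concern you raise appears to be a defect in the formulation of $\zeta$, not in your invariance strategy.
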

\begin{proof}
Let $Q\sim\cqe(n)$. With probability one, $(U_1,V_1,U_2,V_2)$ is uniquely defined up to the equivalence relation
\[
\lb(U_1,V_1,U_2,V_2)\rb_{\Sp}=\lb(U_1D,V_1D,U_2\diag(D,A),V_2\diag(D,A))\rb_{\Sp}
\]
for any $D\in\Sp(1)^k$ and $A\in\Sp(n-2k)$. So $\zeta(Q)$ is well-defined as a random matrix. Similar to the proof of Proposition~\ref{prop:eta}, the $\Sp(n)\times\Sp(n)$-invariance of $Q$ leads us to the $\Sp(k)^2\times\Sp(n-k)^2$-invariance of $\zeta(Q)$, forcing the distribution of the latter to be the uniform distribution on $\Sp(k)^2\times\Sp(n-k)\times\Flag(1,\dots,k,\mathbb{H}^{n-k})$. The $\mathbb{R}$ and $\mathbb{C}$  cases are specializations of the result for $\mathbb{H}$.
\end{proof}
When $n$ is even and $k$ is chosen to be $n/2$, the results above take a particularly neat form:
\begin{corollary}[Cosine-sine vectors distributions of circular ensembles III]
Let $n=2k$ be a positive integer. Let $\eta$ and $\zeta$ be as in Proposition~\ref{prop:eta} and  Theorem~\ref{thm:zeta} respectively.
\begin{enumerate}[{label=\upshape(\roman*)}]
    \item If $Q\sim\cre(n)$, then $\eta(Q)\sim\unif\bigl(\O(k)^4/\O(1)^k\bigr)$ and $\zeta(Q)\sim\unif\bigl(\O(k)^3\times\Flag(\mathbb{R}^k)\bigr)$.
    \item If $Q\sim\cue(n)$, then $\eta(Q)\sim\unif\bigl(\U(k)^4/\U(1)^k\bigr)$ and $\zeta(Q)\sim\unif\bigl(\U(k)^3\times\Flag(\mathbb{C}^k)\bigr)$.
    \item If $Q\sim\cqe(n)$, then $\eta(Q)\sim\unif\bigl(\Sp(k)^4/\Sp(1)^k\bigr)$ and $\zeta(Q)\sim\unif\bigl(\Sp(k)^3\times\Flag(\mathbb{H}^k)\bigr)$.
    \end{enumerate}
\end{corollary}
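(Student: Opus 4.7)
The plan is to specialize the general results in Proposition~\ref{prop:eta} and Theorem~\ref{thm:zeta} to the boundary case $n = 2k$, where the $I_{n-2k}$ block in the cosine-sine decomposition has size $0$ and hence disappears from Table~\ref{tab:decomp}. Once this observation is in place, the proof reduces to a routine simplification of group- and manifold-factors in the codomains of $\eta$ and $\zeta$; no fresh invariance arguments are needed, since these have already been carried out in Proposition~\ref{prop:eta} and Theorem~\ref{thm:zeta}.

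First, for the $\eta$ statement, I would substitute $n = 2k$ throughout Proposition~\ref{prop:eta}. Since $n - k = k$, the top factor $\O(k)^2 \times \O(n-k)^2$ collapses to $\O(k)^4$, and analogously over $\mathbb{C}$ and $\mathbb{H}$. Since $n - 2k = 0$, the group $\O(n-2k)$ is trivial (as are $\U(0)$ and $\Sp(0)$), so the subgroup $\O(1)^k \times \O(n-2k)$ reduces to $\O(1)^k$, and similarly $\U(1)^k \times \U(n-2k) = \U(1)^k$ and $\Sp(1)^k \times \Sp(n-2k) = \Sp(1)^k$. Plugging these into Proposition~\ref{prop:eta} yields $\eta(Q) \sim \unif\bigl(\O(k)^4/\O(1)^k\bigr)$ in the real case, and the analogous statements over $\mathbb{C}$ and $\mathbb{H}$.

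Second, for the $\zeta$ statement, I would apply the same substitution to Theorem~\ref{thm:zeta}. The factor $\O(k)^2 \times \O(n-k)$ becomes $\O(k)^3$ (and similarly for $\U$ and $\Sp$). For the flag-manifold factor, $\Flag(1,\dots,k,\mathbb{R}^{n-k}) = \Flag(1,\dots,k,\mathbb{R}^k)$, and the key observation is that a $k$-dimensional subspace of $\mathbb{R}^k$ is forced to equal $\mathbb{R}^k$ itself, so the outermost slot of the flag is redundant and we have a canonical identification with $\Flag(1,\dots,k-1,\mathbb{R}^k) = \Flag(\mathbb{R}^k)$, per the notation introduced in Section~\ref{sec:intro}. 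The same identification holds over $\mathbb{C}$ and $\mathbb{H}$. Substituting into Theorem~\ref{thm:zeta} then gives $\zeta(Q) \sim \unif\bigl(\O(k)^3 \times \Flag(\mathbb{R}^k)\bigr)$ in the real case, and its analogues over $\mathbb{C}$ and $\mathbb{H}$.

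There is no real obstacle: the one subtlety worth pausing over is the identification $\Flag(1,\dots,k,\mathbb{F}^k) = \Flag(\mathbb{F}^k)$ for $\mathbb{F} \in \{\mathbb{R},\mathbb{C},\mathbb{H}\}$, which is a straightforward consequence of the definitions but needs to be pointed out explicitly so that the resulting manifold is recognized as the complete flag manifold. With this identification in hand, the corollary follows by direct specialization.
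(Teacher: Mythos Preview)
Your proposal is correct and is precisely the specialization the paper has in mind: the corollary is stated without proof there, so direct substitution of $n=2k$ into Proposition~\ref{prop:eta} and Theorem~\ref{thm:zeta}, together with the identification $\Flag(1,\dots,k,\mathbb{F}^k)=\Flag(\mathbb{F}^k)$ you point out, is exactly the intended argument.
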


\section{Autonne--Takagi vectors distributions of circular ensembles}\label{sec:circ2}

The Autonne--Takagi decompositions and Lagrangian Grassmannians will be important in this section. So before getting to the circular ensembles, we will need to fill in some gaps in the literature: (i) The standard Autonne--Takagi decomposition \cite[Corollaries~2.2.6(a) and 4.4.4(c)]{HJ} applies to complex symmetric matrix but we will need a version for Lagrangian and symplectically symmetric matrices. (ii) While the matrix submanifold representation for real Lagrangian Grassmannian  in the fourth column of Table~\ref{tab:rep} is known \cite[pp.~929--930]{Lag1}, i.e.,
\[
\LGr(\mathbb{R}^{2n}) \cong \U(n)/\O(n) \cong \{QQ^\tp : Q \in \U(n)\},
\]
we will need to establish the complex analogue and (part of) the quaternionic one.

In the case of the complex Lagrangian Grassmannian, there are definitions that deviate from the one we used \cite[pp.~79--80]{HL}. A notable example is Arnold's definition \cite{Arnold}, where $\LGr(\mathbb{C}^{2n})$ is taken to be the set of Lagrangian subspaces in $\mathbb{C}^{2n}$ with respect to the \emph{real} symplectic form in $\mathbb{R}^{4n}$, as opposed to the complex symplectic form in $\mathbb{C}^{2n}$. The result is that Arnold's complex Lagrangian Grassmannian is isomorphic to $\U(n)$ instead of $\Sp(n)/\U(n)$.

\subsection{Complex Lagrangian Grassmannian}\label{sec:CLG}

We begin with an Autonne--Takagi decomposition decomposition for $X \in \mathsf{V}^2(\mathbb{C}^{2n})$. Recall from \eqref{eq:J} that $I_{n,n}=\diag(I_n,-I_n)$.
\begin{proposition}[Lagrangian Autonne--Takagi decomposition]
Let $X\in\mathbb{C}^{2n \times 2n}$ with $X=X^\la$. Then there exist $Q\in\U(2n)$ and $\Sigma=\diag(\sigma_1,\dots,\sigma_{2n})\in\mathbb{R}^{2n\times 2n}$ such that \[
X=Q\Sigma Q^\la.
\]
\end{proposition}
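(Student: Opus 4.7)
The plan is to reduce the Lagrangian Autonne--Takagi decomposition to the ordinary spectral theorem for Hermitian matrices, using the observation that the Lagrangian adjoint is just the Hermitian adjoint conjugated by $I_{n,n}$ (and $I_{n,n}^2 = I_{2n}$, $I_{n,n}^\h = I_{n,n}$). Concretely, the Lagrangian symmetry condition $X = X^\la = I_{n,n} X^\h I_{n,n}$ can be rewritten, after right multiplication by $I_{n,n}$, as
\[
X I_{n,n} = I_{n,n} X^\h.
\]
The right-hand side is the Hermitian conjugate of the left-hand side, so $X I_{n,n}$ is a Hermitian matrix.

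Having converted Lagrangian symmetry into ordinary Hermiticity, I would then apply the standard spectral theorem to find $Q \in \U(2n)$ and a real diagonal $\Lambda = \diag(\lambda_1,\dots,\lambda_{2n})$ with
\[
X I_{n,n} = Q \Lambda Q^\h.
\]
Post-multiplying by $I_{n,n}$ yields $X = Q \Lambda Q^\h I_{n,n}$. Setting $\Sigma \coloneqq \Lambda I_{n,n}$, which is a real diagonal matrix because $\Lambda$ is real diagonal and $I_{n,n}$ is a signed identity, one rewrites this as
\[
X = Q (\Sigma I_{n,n}) Q^\h I_{n,n} = Q \Sigma \bigl( I_{n,n} Q^\h I_{n,n} \bigr) = Q \Sigma Q^\la,
\]
which is the desired decomposition.

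There is no real obstacle beyond spotting the correct twist: the only nontrivial step is recognizing that conjugating $X$ on one side by $I_{n,n}$ turns $\mathsf{V}^2(\mathbb{C}^{2n})$ into $\mathsf{H}^2(\mathbb{C}^{2n})$. Once this is seen, everything reduces to the Hermitian eigenvalue decomposition from Table~\ref{tab:decomp}. Note also that the statement as written does not prescribe positivity or ordering of the $\sigma_i$, which is consistent with the fact that the scalars produced this way are signed (they are eigenvalues of a general Hermitian matrix, not singular values), and indeed this is the precise reason that Autonne--Takagi values must be excluded from the genericity considerations of~\eqref{eq:x}, as foreshadowed at the end of Section~\ref{sec:mat}.
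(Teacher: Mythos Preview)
Your proof is correct and follows essentially the same approach as the paper: the paper multiplies on the left, setting $Z=I_{n,n}X$, while you multiply on the right, using $XI_{n,n}$, but both reduce Lagrangian symmetry to ordinary Hermiticity via the twist by $I_{n,n}$ and then invoke the spectral theorem. The only cosmetic difference is that the paper relabels the eigenvector matrix as $Q=I_{n,n}U$, whereas in your version the unitary from the spectral decomposition serves directly as $Q$.
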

\begin{proof}
    Let $Z=I_{n,n}X$. Then $Z^\h=X^\h I_{n,n}$. Since $X=X^\la$, we have $I_{n,n}X=I_{n,n}X^\la=X^\h I_{n,n}$, and thus $Z=Z^\h$. So $Z$ has an eigenvalue decomposition $Z=U\Lambda U^\h$ with $U\in\U(2n)$ and  diagonal $\Lambda \in \mathbb{R}^{2n \times 2n}$. Therefore,
\[
X=I_{n,n}Z=I_{n,n}U\Lambda U^\h=(I_{n,n}U)(\Lambda I_{n,n})(I_{n,n}U^\h).
\]
Now let $Q=I_{n,n}U$ and $\Sigma=\Lambda I_{n,n}$. Clearly, $Q\in\U(2n)$ and $\Sigma \in \mathbb{R}^{2n \times 2n}$ is diagonal. Furthermore, $Q^\la=I_{n,n}U^\h I_{n,n}I_{n,n}=I_{n,n}U^\h = Q$, as required.
\end{proof}

The next result will also be useful for Theorem~\ref{thm:omega}.
\begin{lemma}[$\U(n)$ as a subgroup of $\Sp(n)$]
Let $n$ be a positive integer. Then
\begin{align}
\U(n) &\cong\{Q\in\Sp(n):Q=I_{n,n}QI_{n,n}\}  \label{eq:fixpoint} \\
&= \{Q\in\Sp(n):Q^\la Q=I\}. \label{eq:U}
\end{align}
\end{lemma}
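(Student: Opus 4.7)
The plan is to view everything inside $\U(2n)$ via the standard complex representation of quaternionic matrices $A=B+Cj\mapsto\widetilde A=\bigl[\begin{smallmatrix}B&-\bar C\\ C&\bar B\end{smallmatrix}\bigr]$, under which, as the paper recalls, $\Sp(n)\cong\U(2n)\cap\Sp(2n,\mathbb{C})$. The $\la$-operation is only defined on $2n\times2n$ complex matrices, so this identification is precisely what gives sense to the statements involving $Q^\la$ and $I_{n,n}QI_{n,n}$ for $Q\in\Sp(n)$.

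For the equality \eqref{eq:fixpoint}\,=\,\eqref{eq:U}, I would just unfold definitions. Since $Q\in\Sp(n)\subset\U(2n)$ we have $Q^\h Q=I_{2n}$, hence $Q^\h=Q^{-1}$. By \eqref{eq:sa},
\[
Q^\la Q = I \iff I_{n,n}Q^\h I_{n,n}Q=I \iff Q^\h I_{n,n}Q=I_{n,n} \iff I_{n,n}Q=QI_{n,n},
\]
where the last step multiplies on the left by $Q$ and uses $QQ^\h=I$. Because $I_{n,n}^2=I$, the commutation $I_{n,n}Q=QI_{n,n}$ is the same as $Q=I_{n,n}QI_{n,n}$, giving the claimed set equality.

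For the isomorphism $\U(n)\cong\{Q\in\Sp(n):Q=I_{n,n}QI_{n,n}\}$, I would analyze the fixed-point condition blockwise. With $I_{n,n}=\diag(I_n,-I_n)$, the condition $Q=I_{n,n}QI_{n,n}$ forces $Q$ to be block diagonal with two $n\times n$ blocks. Intersecting with the explicit shape $Q=\bigl[\begin{smallmatrix}B&-\bar C\\ C&\bar B\end{smallmatrix}\bigr]$ coming from $\Sp(n)\hookrightarrow\U(2n)$ forces $C=0$, leaving $Q=\diag(B,\bar B)$. The unitary constraint $Q^\h Q=I_{2n}$ then collapses to the single condition $B^\h B=I_n$, i.e.\ $B\in\U(n)$. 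Consequently the assignment
\[
\iota:\U(n)\longrightarrow\{Q\in\Sp(n):Q=I_{n,n}QI_{n,n}\},\qquad B\longmapsto\begin{bmatrix}B&0\\ 0&\bar B\end{bmatrix}
\]
is a well-defined bijection, and it is manifestly a group homomorphism since $\iota(BB')=\diag(BB',\overline{BB'})=\iota(B)\iota(B')$, so it is an isomorphism of (compact Lie) groups.

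The proof is essentially bookkeeping; the only mildly delicate point is keeping track of the three distinct incarnations of $\Sp(n)$ (as quaternionic unitaries, as complex $2n\times2n$ matrices intersecting $\U(2n)$ with $\Sp(2n,\mathbb{C})$, and as the domain of $Q^\la$), and verifying that the block-diagonal subgroup carved out by $I_{n,n}$ really recovers the ``complex coordinates'' $B$ of the quaternionic representation. Once that identification is in place, both equalities are immediate from matrix algebra.
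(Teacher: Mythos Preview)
Your proof is correct and follows essentially the same approach as the paper: both unwind $Q^\la Q=I$ to the commutation $I_{n,n}Q=QI_{n,n}$ via $Q^\h=Q^{-1}$, and both identify the fixed-point set with $\{\diag(B,\bar B):B\in\U(n)\}$. The only minor difference is that you invoke the explicit block form $\bigl[\begin{smallmatrix}B&-\bar C\\ C&\bar B\end{smallmatrix}\bigr]$ of the quaternion embedding directly, whereas the paper starts from a general $2\times2$ block matrix in $\U(2n)\cap\Sp(2n,\mathbb{C})$ and derives $D=\bar A$ from the symplectic condition $A^\tp D=I$ together with $A^\h A=I$; your shortcut is legitimate and slightly more efficient, while the paper's version is more self-contained.
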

\begin{proof}
For convenience, we define the involution
\begin{equation}\label{eq:tau}
    \tau(Q)\coloneqq I_{n,n}QI_{n,n}
\end{equation}
for any $Q\in\mathbb{C}^{2n\times 2n}$. So $Q^\la = \tau(Q^\h)$ and the condition in \eqref{eq:fixpoint} is $\tau(Q) = Q$. Recall that $\Sp(n)=\U(2n)\cap\Sp(2n,\mathbb{C})$. Write $Q=\begin{bsmallmatrix}
      A&B\\
      C&D
  \end{bsmallmatrix} \in \Sp(n)$. Then $\tau(Q)=Q$ implies that $Q=\diag(A,D)$. As $Q\in \Sp(2n,\mathbb{C})$, we have $Q^\tp J_{2n} Q=J_{2n}$ and thus
\[
Q^\tp J_{2n}Q=\begin{bmatrix}
      0&A^\tp D\\
      -D^\tp A&0
  \end{bmatrix}.
\]
Hence $A^\tp D=I$. As $Q\in \U(2n)$, we have $A^\h A=I$ and thus $D=\Bar{A}$. Hence
\[
\{Q\in\Sp(n):Q=\tau(Q)\}\subseteq \{\diag(A,\Bar{A}): A\in\U(n)\} \cong\U(n).
\]
We verify that $\tau\diag\bigl((A,\Bar{A})\bigr) = \diag(A,\Bar{A})$ and so ``$\subseteq$'' may be replaced by ``$=$'' to give \eqref{eq:fixpoint}. It remains to show that for $Q\in\Sp(n)$, the conditions $Q=\tau(Q)$ and $Q^\la Q=I$ are equivalent. If $Q=\tau(Q)$, then
\[
Q^\la Q = I_{n,n}Q^\h I_{n,n} Q = I_{n,n} Q^\h I_{n,n} (I_{n,n} Q I_{n,n}) =I_{n,n} Q^\h Q I_{n,n}=I.
\]
If $Q^\la Q=I$, then
\[
Q^\h \tau(Q) =Q^\h I_{n,n}QI_{n,n} = I_{n,n} (I_{n,n} Q^\h I_{n,n}) QI_{n,n} = I_{n,n}  Q^\la Q I_{n,n} =I_{n,n}^2 = I.
\]
Hence we have \eqref{eq:U}.
\end{proof}

\begin{theorem}[Complex Lagrangian Grassmannian as matrix submanifold]\label{thm:CLGr}
For any positive integer $n$, we have
\begin{align}
\LGr(\mathbb{C}^{2n}) &\cong \Sp(n)/\U(n) \cong \{QQ^\la \in \mathbb{C}^{2n \times 2n} :Q\in\Sp(n)\} \label{eq:sub}\\
&= \Sp(n) \cap \mathsf{V}^2(\mathbb{C}^{2n}) = \{X\in\Sp(n) : X=X^\la\}. \notag
\end{align}
\end{theorem}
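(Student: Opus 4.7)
The plan is to build the chain of $\Sp(n)$-equivariant diffeomorphisms in three movements, with the two equalities on the right of the statement being definitional.

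First, $\LGr(\mathbb{C}^{2n})\cong\Sp(n)/\U(n)$ follows by the standard homogeneous-space route. Given any Lagrangian $L\subseteq\mathbb{C}^{2n}$, I would pick a Hermitian-orthonormal basis $v_1,\dots,v_n$ of $L$ and extend it to a Hermitian-orthonormal basis of $\mathbb{C}^{2n}$ by $w_1,\dots,w_n$ additionally satisfying the symplectic duality $v_i^\tp J_{2n}w_j=\delta_{ij}$, achievable by a Gram--Schmidt-type procedure inside the Hermitian orthogonal complement of $L$. Then $[v_1\cdots v_n\;w_1\cdots w_n]$ lies in $\U(2n)\cap\Sp(2n,\mathbb{C})=\Sp(n)$ and carries $L_0\coloneqq\spn\{e_1,\dots,e_n\}$ to $L$, proving transitivity. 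A short direct block computation (any $Q\in\Sp(n)$ fixing $L_0$ is forced to be block diagonal by unitarity, and then of the form $\diag(A,\bar A)$ by the symplectic condition) identifies the stabilizer of $L_0$ with the copy of $\U(n)$ appearing in \eqref{eq:fixpoint}.

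Next, for the middle isomorphism, consider $\phi\colon\Sp(n)\to\mathbb{C}^{2n\times 2n}$, $Q\mapsto QQ^\la$. Then $\phi(QH)=\phi(Q)$ iff $HH^\la=I$, which by the lemma forces $H\in\U(n)$; so $\phi$ descends to an injective $\Sp(n)$-equivariant map $\Sp(n)/\U(n)\hookrightarrow\{QQ^\la:Q\in\Sp(n)\}$. The containment $\{QQ^\la\}\subseteq\Sp(n)\cap\mathsf{V}^2(\mathbb{C}^{2n})$ then reduces to three direct verifications: the involutivity of $\la$ gives $(QQ^\la)^\la=QQ^\la$; the identity $(Q^\la)^\h Q^\la=I_{n,n}Q\,I_{n,n}^{\,2}\,Q^\h I_{n,n}=I$ shows $Q^\la$ is unitary; and combining $\bar Q J_{2n}Q^\h=J_{2n}$ (a rewrite of $Q^\tp J_{2n}Q=J_{2n}$ using $Q^\h=Q^{-1}$) with $I_{n,n}J_{2n}I_{n,n}=-J_{2n}$ gives $(Q^\la)^\tp J_{2n}Q^\la=J_{2n}$, so $Q^\la\in\Sp(n)$ and hence $QQ^\la\in\Sp(n)\cap\mathsf{V}^2$.

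The main obstacle is the reverse inclusion $\Sp(n)\cap\mathsf{V}^2(\mathbb{C}^{2n})\subseteq\{QQ^\la\}$. For $X$ in the left side my approach is to form $A\coloneqq XI_{n,n}$ and verify that it is a Hermitian involution: using $X^\h=I_{n,n}XI_{n,n}$ (a rewrite of $X=X^\la$), one obtains $A^\h=I_{n,n}X^\h=XI_{n,n}=A$ and $A^2=X(I_{n,n}XI_{n,n})=XX^\h=I$. Because a Hermitian involution is determined by its $+1$-eigenspace $L_X\coloneqq\ker(A-I)=\{v\in\mathbb{C}^{2n}:Xv=I_{n,n}v\}$, it is enough to show $L_X$ is a Lagrangian and to produce $Q\in\Sp(n)$ with $QI_{n,n}Q^\h=A$. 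The dimension satisfies $\dim L_X=n+\tfrac{1}{2}\tr A$, so $\dim L_X=n$ reduces to $\tr(XI_{n,n})=0$: combining $J_{2n}XJ_{2n}^{-1}=\bar X$ (obtained from $X^\tp J_{2n}X=J_{2n}$ together with $X^\h=X^{-1}$) with $J_{2n}I_{n,n}J_{2n}^{-1}=-I_{n,n}$ and cyclic invariance yields $\tr(XI_{n,n})=-\overline{\tr(XI_{n,n})}$, while $X=X^\la$ forces this trace to be real, hence zero. For isotropy, any $v,w\in L_X$ give
\[
v^\tp J_{2n}w=(Xv)^\tp J_{2n}(Xw)=(I_{n,n}v)^\tp J_{2n}(I_{n,n}w)=-v^\tp J_{2n}w,
\]
where the first equality uses $X^\tp J_{2n}X=J_{2n}$, the second uses $Xv=I_{n,n}v$ and $Xw=I_{n,n}w$, and the last uses $I_{n,n}J_{2n}I_{n,n}=-J_{2n}$; hence $v^\tp J_{2n}w=0$. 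Thus $L_X\in\LGr(\mathbb{C}^{2n})$, and transitivity from the first step furnishes $Q\in\Sp(n)$ with $QL_0=L_X$; the computation $(QQ^\la)I_{n,n}=QI_{n,n}Q^\h$ identifies $L_{QQ^\la}=QL_0=L_X$, so the Hermitian involutions $XI_{n,n}$ and $QI_{n,n}Q^\h$ share a $+1$-eigenspace, must coincide, and therefore $X=QQ^\la$.
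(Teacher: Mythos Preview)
Your argument is correct, but it follows a genuinely different path from the paper's.

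The paper dispatches the two nontrivial identifications in one stroke by invoking a general Cartan embedding result: since $\tau(Q)=I_{n,n}QI_{n,n}$ is an involutive automorphism of the compact connected group $\Sp(n)$ with fixed-point subgroup isomorphic to $\U(n)$ (this is exactly \eqref{eq:fixpoint}), and since both $\Sp(n)/\U(n)$ and $\{Q\in\Sp(n):\tau(Q)=Q^{-1}\}$ are connected, \cite[Proposition~12]{Cartan} gives
\[
\Sp(n)/\U(n)\;\cong\;\{Q\tau(Q)^{-1}:Q\in\Sp(n)\}\;=\;\{Q\in\Sp(n):\tau(Q)=Q^{-1}\},
\]
which unwinds to $\{QQ^\la:Q\in\Sp(n)\}=\{X\in\Sp(n):X=X^\la\}$. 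No Lagrangian subspaces, eigenspaces, or trace computations appear.

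Your route is more elementary and more geometric. You avoid the black-box symmetric-space machinery and instead recover the Lagrangian subspace directly from the matrix: to each $X\in\Sp(n)\cap\mathsf{V}^2(\mathbb{C}^{2n})$ you attach the Hermitian involution $A=XI_{n,n}$, verify via the trace identity $\tr(XI_{n,n})=-\overline{\tr(XI_{n,n})}\in\mathbb{R}$ that its $+1$-eigenspace has dimension~$n$, and check isotropy by hand. This makes the name ``Lagrangian Grassmannian'' visible in the proof and keeps everything self-contained. The cost is length; the paper's Cartan-embedding approach is shorter and transfers verbatim to any compact symmetric space $G/K$, whereas your eigenspace argument is tailored to this specific involution. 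Interestingly, for the quaternionic analogue in Corollary~\ref{cor:QLGr} the paper itself abandons the Cartan route and argues via the symplectic Autonne--Takagi decomposition, which is closer in spirit to your hands-on style.
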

\begin{proof}
As an abstract manifold, $\LGr(\mathbb{C}^{2n})$ is the set of Lagrangian subspaces in $\mathbb{C}^{2n}$ with respect to its standard complex symplectic form. The first diffeomorphism may be found in \cite[Proposition~2.11]{Lag2}. We will establish the remaining two characterizations. Note that $\tau$ in \eqref{eq:tau} restricts to an involutive automorphism on $\Sp(n)$. By \eqref{eq:fixpoint}, $\{Q\in\Sp(n): Q=\tau(Q)\}\cong\U(n)$. Since $\Sp(n)/\U(n)$ and $\{Q\in\Sp(n):\tau(Q)=Q^{-1}\}$ are connected, by \cite[Proposition~12]{Cartan} we have
\[
\Sp(n)/\U(n)\cong \equalto{\{Q\in\Sp(n):\tau(Q)=Q^{-1}\}}{\{X\in\Sp(n) : X=X^\la\}} = \equalto{\{Q\tau(Q)^{-1} : Q\in\Sp(n)\}}{\{QQ^\la : Q\in\Sp(n)\}}
\]
as required.
\end{proof}

\subsection{Quaternionic Lagrangian Grassmannian}

To get an Autonne--Takagi decomposition for $X \in \mathsf{Y}^2(\mathbb{C}^{2n})$, we will need the following decomposition of complex skew-symmetric matrices, regarded variously as a skew-symmetric analog of Autonne--Takagi decomposition \cite[Corollary~2.2.6(b)]{HJ} or a special case of Youla decomposition \cite[p.~694]{Youla}. Recall from \eqref{eq:J} that $J_{2n} \coloneqq \begin{bsmallmatrix} 0 &  -I_n \\ I_n & 0 \end{bsmallmatrix}$.
\begin{lemma}\label{lem:you}
Let $X\in\mathbb{C}^{n\times n}$ with $X^\tp = - X$. Then there exists $U\in\U(n)$ such that
\[
X=U\diag(\sigma_1J_2,\dots,\sigma_kJ_2,0,\dots,0) U^\tp,
\]
where $\sigma_1,\dots,\sigma_k \in \mathbb{R}_\p$ are the nonzero singular values of $X$ and $J_2 = \begin{bsmallmatrix}
    0&-1\\
    1&0
\end{bsmallmatrix}$.
\end{lemma}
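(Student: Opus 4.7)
The plan is to proceed by induction on $n$. The base cases $n\le 1$ force $X=0$ and the statement is vacuous. For the inductive step, assume $X\ne 0$ and let $\sigma_1>0$ be the largest singular value of $X$, with unit right singular vector $v_1\in\mathbb{C}^n$, i.e., $X^\h X v_1=\sigma_1^2 v_1$. Since $X^\tp=-X$ implies $X^\h=-\bar X$, this reads $\bar X X v_1=-\sigma_1^2 v_1$.

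The construction of a companion vector is the heart of the argument: set $v_2\coloneqq\sigma_1^{-1}\overline{X v_1}$. I would verify in turn that (i) $\|v_2\|=1$, from $\|X v_1\|^2=v_1^\h X^\h X v_1=\sigma_1^2$; (ii) $v_1\perp v_2$, from the skew-symmetry identity $v_1^\tp X v_1=0$ (a consequence of $X=-X^\tp$) combined with $v_1^\h v_2=\sigma_1^{-1}\overline{v_1^\tp X v_1}=0$; (iii) $Xv_1=\sigma_1\bar v_2$ by construction; and (iv) $X v_2=\sigma_1^{-1}\overline{\bar X X v_1}=-\sigma_1\bar v_1$.

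Next, extend $\{v_1,v_2\}$ to an orthonormal basis of $\mathbb{C}^n$ and assemble it as the columns of $V\in\U(n)$. I would then show that
\[
V^\tp X V=\begin{bmatrix}\sigma_1 J_2 & 0\\ 0 & X'\end{bmatrix}
\]
for some $X'\in\mathbb{C}^{(n-2)\times(n-2)}$. The top-left block drops out of (iii)--(iv) and the elementary identity $v_i^\tp\bar v_j=\overline{\langle v_i,v_j\rangle}=\delta_{ij}$. The vanishing of the off-diagonal blocks is the key point: for any column $w$ of $V$ with $w\perp v_1,v_2$, the identity $v_j^\tp X w=-(X v_j)^\tp w$ coming from $X=-X^\tp$ reduces, via (iii)--(iv), to a scalar multiple of $\bar v_{3-j}^\tp w=\overline{\langle v_{3-j},w\rangle}=0$.

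Finally, since $V^\tp$ is itself unitary, the block $X'$ inherits skew-symmetry from $V^\tp X V$ and has the remaining singular values of $X$. The inductive hypothesis yields $X'=U'\diag(\sigma_2 J_2,\dots,\sigma_k J_2,0,\dots,0)U'^\tp$ for some $U'\in\U(n-2)$, and setting $U\coloneqq V\diag(I_2,U')\in\U(n)$ produces the desired decomposition $X=U\diag(\sigma_1 J_2,\dots,\sigma_k J_2,0,\dots,0)U^\tp$. The only subtle point I anticipate is the choice of $v_2$ as the conjugate of $\sigma_1^{-1}X v_1$ rather than the left singular vector paired with $v_1$ (the choice that works in the symmetric Autonne--Takagi proof); without the conjugation the off-diagonal blocks in $V^\tp X V$ do not vanish, because it is skew-symmetry, not symmetry, of $X$ that is driving the argument.
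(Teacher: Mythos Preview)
The paper does not actually prove this lemma; it is quoted as a known result with references to Horn--Johnson and Youla. Your inductive argument is essentially the standard one (and is the route taken in \cite[Corollary~2.2.6(b)]{HJ}), and every verification you outline---(i)--(iv), the $2\times 2$ block, and the vanishing of the off-diagonal blocks---is correct.

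There is, however, a one-character slip in the final assembly. From
\[
V^\tp X V=\begin{bmatrix}\sigma_1 J_2 & 0\\ 0 & X'\end{bmatrix}
\]
with $V\in\U(n)$ you must invert using $(V^\tp)^{-1}=\bar V$ and $V^{-1}=V^\h=\bar V^\tp$, giving $X=\bar V\begin{bsmallmatrix}\sigma_1 J_2 & 0\\ 0 & X'\end{bsmallmatrix}\bar V^\tp$. Hence the correct choice is $U\coloneqq \bar V\,\diag(I_2,U')$, not $U\coloneqq V\,\diag(I_2,U')$; with your definition one obtains $VV^\tp X VV^\tp$, which equals $X$ only when $V$ is real. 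With that correction the proof is complete.
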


We emphasize that the following symplectic Autonne--Takagi decomposition differs from both the quaternionic Autonne--Takagi decomposition in \cite{QAT} and the QDQ decomposition in \cite[p.~16]{EJ}.
\begin{proposition}[Symplectic Autonne--Takagi decomposition]\label{ATS}
Let $X\in\mathbb{C}^{2n\times 2n}$ with $X=X^\s$. Then there exist $Q\in\U(2n)$ and $\Sigma=\diag(\sigma_1,\dots,\sigma_{2n})\in\mathbb{R}^{2n\times 2n}_\p$ such that
\[
X=Q\Sigma Q^\s.
\]
\end{proposition}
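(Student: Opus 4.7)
The plan is to reduce the symplectic Autonne--Takagi decomposition to the skew-symmetric canonical form already established in Lemma~\ref{lem:you}. The key observation is that the involution $X \mapsto X^\s$ becomes transposition after multiplication by $J_{2n}$: since $(XJ_{2n})^\tp = J_{2n}^\tp X^\tp = -J_{2n}X^\tp$, the hypothesis $X = -J_{2n}X^\tp J_{2n}$ combined with $J_{2n}^{-1} = -J_{2n}$ yields $-J_{2n}X^\tp = -XJ_{2n}$. Hence $W \coloneqq XJ_{2n}$ satisfies $W^\tp = -W$, and Lemma~\ref{lem:you} produces $U \in \U(2n)$ and nonnegative reals $\sigma_1,\dots,\sigma_k$ with $W = U\tilde D U^\tp$ for $\tilde D = \diag(\sigma_1 J_2,\dots,\sigma_k J_2,0,\dots,0)$. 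Pad with $\sigma_{k+1} = \dots = \sigma_n = 0$ so that $\tilde D = \bigoplus_{i=1}^n \sigma_i J_2$ is a block-diagonal matrix of $n$ copies of $J_2$ rescaled by the $\sigma_i$.

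The bridge from $\tilde D$ to the desired $\Sigma J_{2n}$-form is combinatorial. Set $\Sigma \coloneqq \diag(\sigma_1,\dots,\sigma_n,\sigma_1,\dots,\sigma_n) \in \mathbb{R}^{2n \times 2n}_\p$, which has the block form $\begin{bsmallmatrix} D & 0 \\ 0 & D \end{bsmallmatrix}$ with $D \coloneqq \diag(\sigma_1,\dots,\sigma_n)$ and therefore commutes with $J_{2n}$, giving $\Sigma J_{2n} = \begin{bsmallmatrix} 0 & -D \\ D & 0 \end{bsmallmatrix}$. Let $P \in \O(2n)$ be the interleaving permutation defined by $Pe_i = e_{2i-1}$ and $Pe_{n+i} = e_{2i}$ for $i = 1,\dots,n$. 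A direct block computation (easy to verify for $n=1,2$) then shows $P\Sigma J_{2n} P^\tp = \tilde D$.

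Combining the two steps with $Q \coloneqq UP \in \U(2n)$ yields
\[
Q\Sigma Q^\s = -Q\Sigma J_{2n} Q^\tp J_{2n} = -U(P\Sigma J_{2n} P^\tp)U^\tp J_{2n} = -U\tilde D U^\tp J_{2n} = -WJ_{2n} = X,
\]
where the final equality uses $J_{2n}^2 = -I$. The only nontrivial step is the permutation bookkeeping needed to recast the $J_2$-block form of Lemma~\ref{lem:you} as $\Sigma J_{2n}$ up to orthogonal conjugation; the required doubling $\sigma_i,\sigma_i$ in $\Sigma$ is the Kramers-style pairing of singular values forced by the symmetry $X = X^\s$, which is entirely expected for matrices in $\mathsf{Y}^2(\mathbb{C}^{2n})$.
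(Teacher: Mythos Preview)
Your proof is correct and follows essentially the same route as the paper's: reduce to the skew-symmetric Youla form of Lemma~\ref{lem:you} by multiplying $X$ by $J_{2n}$, then undo the block structure with the perfect-shuffle permutation. The only cosmetic difference is that you right-multiply ($W = XJ_{2n}$) and take $Q = UP$, whereas the paper left-multiplies ($Z = J_{2n}X$) and takes $Q = J_{2n}^\tp UP$; both choices lead to the same $\Sigma$ up to reordering of the paired singular values.
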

\begin{proof}
Let $Z=J_{2n}X$. Then
\[
Z^\tp =X^\tp J_{2n}^\tp=(-J_{2n}J_{2n})X^\tp (-J_{2n})=-J_{2n}(-J_{2n}X^\tp J_{2n})=-J_{2n}X^\s=-J_{2n}X=-Z.
\]
By Lemma~\ref{lem:you}, there exist $U\in\U(2n)$ and $\sigma_1,\dots,\sigma_k>0$ so that
\[
Z=U\diag(\sigma_1J_2,\dots,\sigma_kJ_2,0\dots,0)U^\tp.
\]
Observe that
\[
\diag(\sigma_1J_2,\dots,\sigma_kJ_2,0\dots,0)=\diag(\sigma_1,\sigma_1,\dots,\sigma_k,\sigma_k,0,\dots,0)\diag(J_2,\dots,J_2),
\]
and that there exists a permutation matrix $P$ with $\diag(J_2,\dots,J_2)=PJ_{2n}P^\tp$. Hence,
\begin{align}
X &=J_{2n}^\tp Z=J_{2n}^\tp U\diag(\sigma_1,\sigma_1,\dots,\sigma_k,\sigma_k,0,\dots,0)PJ_{2n}P^\tp U^\tp \nonumber \\
&=(J_{2n}^\tp UP)\bigl(P^\tp\diag(\sigma_1,\sigma_1,\dots,\sigma_k,\sigma_k,0,\dots,0)P\bigr)(J_{2n}P^\tp U^\tp). \label{eq:last}
\end{align}
Let $Q \coloneqq J_{2n}^\tp UP$. Then
\[
Q^\s=-J_{2n}P^\tp U^\tp J_{2n}J_{2n}=J_{2n}P^\tp U^\tp,
\]
and clearly $Q\in\U(2n)$. Let $\Sigma \coloneqq P^\tp\diag(\sigma_1,\sigma_1,\dots,\sigma_k,\sigma_k,0,\dots,0)P$. Noting that conjugation by a permutation matrix simply permutes the diagonal entries, we see that $\Sigma=\diag(\sigma_1,\dots,\sigma_{2n})\in\mathbb{R}^{2n\times 2n}_\p$ after relabeling of the diagonal entries. Hence the last term in \eqref{eq:last} takes the form $Q\Sigma Q^\s$.
\end{proof}

Using the symplectic Autonne--Takagi decomposition, we deduce the description of a quaternionic Lagrangian Grassmannian in the last column of Table~\ref{tab:rep}.
\begin{corollary}[Quaternionic Lagrangian Grassmannian as matrix submanifold]\label{cor:QLGr}
For any positive integer $n$, we have
\begin{align}
\LGr(\mathbb{H}^{2n}) &\cong \U(2n)/\Sp(n) \cong \{QQ^\s \in \mathbb{C}^{2n \times 2n} : Q\in\U(2n)\} \notag \\
&= \U(2n) \cap \mathsf{Y}^2(\mathbb{C}^{2n}) =  \{X\in\U(2n) :X=X^\s\}. \label{eq:2}
\end{align}
\end{corollary}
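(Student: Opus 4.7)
The plan is to mirror the proof of Theorem~\ref{thm:CLGr}, replacing the symmetric pair $(\Sp(n),\U(n))$ by $(\U(2n),\Sp(n))$ and the Lagrangian adjoint $\la$ by the symplectic adjoint $\s$. The first diffeomorphism $\LGr(\mathbb{H}^{2n})\cong\U(2n)/\Sp(n)$ is the standard homogeneous-space description of the quaternionic Lagrangian Grassmannian and may simply be cited (cf.\ \cite[Proposition~2.11]{Lag2}). The remaining identifications then reduce to exhibiting $\Sp(n)\subset\U(2n)$ as the fixed-point subgroup of an involutive automorphism of $\U(2n)$ and invoking \cite[Proposition~12]{Cartan}, exactly as in the complex case.

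The natural candidate is the map $\sigma\colon\U(2n)\to\U(2n)$ defined by $\sigma(Q)\coloneqq -J_{2n}\bar Q J_{2n}$. First I would check that $\sigma$ is an involutive automorphism of $\U(2n)$, which follows readily from $\bar{J}_{2n}=J_{2n}$ and $J_{2n}^2=-I$. Next, using $\bar Q=Q^{-\tp}$ for unitary $Q$, the fixed-point condition $\sigma(Q)=Q$ rewrites as $Q^\tp J_{2n}Q=J_{2n}$, which cuts out $\Sp(2n,\mathbb{C})$ and hence $\Sp(n)$ inside $\U(2n)$. The crucial identity to establish is
\[
\sigma(Q)^{-1}=\sigma(Q)^\h=-J_{2n}Q^\tp J_{2n}=Q^\s,
\]
so that $Q\sigma(Q)^{-1}=QQ^\s$ for every $Q\in\U(2n)$; a parallel manipulation shows that the anti-fixed condition $\sigma(X)=X^{-1}$ is equivalent to $X=X^\s$. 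Applying \cite[Proposition~12]{Cartan} to the connected group $\U(2n)$ with the involution $\sigma$ then delivers the $\U(2n)$-equivariant diffeomorphism
\[
\U(2n)/\Sp(n)\cong\{Q\sigma(Q)^{-1}:Q\in\U(2n)\}=\{X\in\U(2n):\sigma(X)=X^{-1}\},
\]
which, upon substituting the two identities above, becomes the asserted chain $\{QQ^\s:Q\in\U(2n)\}=\{X\in\U(2n):X=X^\s\}=\U(2n)\cap\mathsf{Y}^2(\mathbb{C}^{2n})$.

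The main technical point is careful bookkeeping with three different involutions on matrices, namely, the transpose, the complex conjugate, and the symplectic adjoint, together with the sign relation $J_{2n}^\h=-J_{2n}$; any sign slip would derail the identification $\sigma(Q)^{-1}=Q^\s$. The connectedness hypothesis in \cite[Proposition~12]{Cartan} poses no obstacle since $(\U(2n),\Sp(n))$ is a compact symmetric pair of type AII, making $\U(2n)/\Sp(n)$ connected. As an alternative to the Cartan route, the symplectic Autonne--Takagi decomposition (Proposition~\ref{ATS}) could be used to exhibit each $X\in\U(2n)\cap\mathsf{Y}^2(\mathbb{C}^{2n})$ directly as $QQ^\s$ by showing that the singular factor $\Sigma$ must collapse to the identity, but this appears less streamlined than mirroring the complex-case argument.
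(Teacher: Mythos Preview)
Your approach is correct but differs from the paper's. The paper does \emph{not} mirror Theorem~\ref{thm:CLGr} here: it cites the first two isomorphisms from the literature (\cite[Proposition~2.32]{Lag2} and \cite[Section~1]{Lag3}; note your reference to Proposition~2.11 is the complex-case citation) and then proves only the equality $\{QQ^\s:Q\in\U(2n)\}=\{X\in\U(2n):X=X^\s\}$ directly via the symplectic Autonne--Takagi decomposition (Proposition~\ref{ATS}): if $X\in\U(2n)$ satisfies $X=X^\s$, write $X=Q\Sigma Q^\s$ and observe that unitarity of $X$ forces $\Sigma=I$, so $X=QQ^\s$; the reverse inclusion is immediate. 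This is precisely the ``alternative'' you mention at the end and dismiss as less streamlined.

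Your Cartan-embedding route has the advantage of being self-contained (no need to cite \cite{Lag3}) and structurally parallel to Theorem~\ref{thm:CLGr}, yielding the homogeneous-space identification and the matrix-submanifold description in one stroke. The paper's route, by contrast, is shorter and ties the corollary directly to the newly established Proposition~\ref{ATS}, which is the point of the subsection. One caution with your plan: \cite[Proposition~12]{Cartan} as used in Theorem~\ref{thm:CLGr} requires connectedness of \emph{both} the quotient and the anti-fixed set $\{X\in\U(2n):\sigma(X)=X^{-1}\}=\{X\in\U(2n):X=X^\s\}$, but you only address the former. The cleanest way to see the latter is connected is to note that every such $X$ arises as $QQ^\s$ from the connected group $\U(2n)$---which is exactly the Autonne--Takagi argument you set aside. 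So the two approaches are not entirely independent.
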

\begin{proof}
As an abstract manifold, $\LGr(\mathbb{H}^{2n})$ is the set of Lagrangian subspaces in $\mathbb{H}^{2n}$ with respect to its standard quaternionic skew-Hermitian form. The first and second isomorphisms may be found in \cite[Proposition~2.32]{Lag2} and \cite[Section~1]{Lag3}. It remains to show the alternate characterization in \eqref{eq:2}.  Let $X\in\U(2n)$ with $X=X^\s$. By Proposition~\ref{ATS}, we must have $X=Q\Sigma Q^\s$ for some $Q\in\U(2n)$ and $\Sigma=\diag(\sigma_1,\dots,\sigma_{2n})\in\mathbb{R}_\p^{2n\times 2n}$. But since $X\in\U(2n)$, we must also have that $\Sigma \in\U(2n)$, which is only possible if $\Sigma=I$. Hence $X=QQ^\s$. Conversely, for any $Q\in\U(2n)$, we clearly have that $QQ^\s=(QQ^\s)^\s$, and thus $QQ^\s\in  \U(2n) \cap \mathsf{Y}^2(\mathbb{C}^{2n})$.
\end{proof}

The next result will be useful for Theorem~\ref{thm:omega}.
\begin{lemma}[$\Sp(n)$ as a subgroup of $\U(2n)$]\label{lem:sp}
For any positive integer $n$, we have
\[
\Sp(n) = \{Q\in\U(2n):Q^\s Q=I\}.
\]
\end{lemma}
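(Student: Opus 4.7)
The plan is to prove this by direct computation, exploiting the relation $J_{2n}^2 = -I$, and then recognizing the resulting identity as the defining condition for the complex symplectic group.

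First I would recall from Section~\ref{sec:mat} that $\Sp(n) = \U(2n) \cap \Sp(2n,\mathbb{C})$, where $\Sp(2n,\mathbb{C}) = \{Q \in \mathbb{C}^{2n \times 2n} : Q^\tp J_{2n} Q = J_{2n}\}$. Since the containment $\Sp(n) \subseteq \U(2n)$ already holds, it suffices to show that for $Q \in \U(2n)$, the condition $Q^\s Q = I$ is equivalent to $Q \in \Sp(2n,\mathbb{C})$.

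The key computation would unfold the definition $Q^\s = -J_{2n} Q^\tp J_{2n}$ from \eqref{eq:sa}:
\[
Q^\s Q = -J_{2n} Q^\tp J_{2n} Q.
\]
Using $J_{2n}^2 = -I_{2n}$ (equivalently $J_{2n}^{-1} = -J_{2n}$), the equation $Q^\s Q = I$ rearranges as
\[
Q^\tp J_{2n} Q = -J_{2n}^{-1} = J_{2n},
\]
which is precisely the condition $Q \in \Sp(2n,\mathbb{C})$. Running this chain of equivalences in both directions finishes the proof.

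There is no real obstacle here; the only care needed is bookkeeping of signs arising from $J_{2n}^2 = -I$. I would present the argument as a short two-line calculation, perhaps noting explicitly that this parallels the characterization $\U(n) = \{Q \in \Sp(n) : Q^\la Q = I\}$ in \eqref{eq:U}, so that the two lemmas can be read as companion statements describing the inclusions $\U(n) \subseteq \Sp(n) \subseteq \U(2n)$ in terms of fixed points of the involutions induced by $\la$ and $\s$ respectively.
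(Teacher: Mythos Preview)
Your proposal is correct and follows essentially the same approach as the paper: both reduce to showing that for $Q\in\U(2n)$ the condition $Q^\s Q=I$ is equivalent to $Q^\tp J_{2n}Q=J_{2n}$, and both verify this via the identity $J_{2n}^{-1}=-J_{2n}$.
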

\begin{proof}
As $\Sp(n)=\U(2n)\cap\Sp(2n,\mathbb{C})$, we need to show that the conditions $Q^\tp J_{2n}Q=J_{2n}$ and $Q^\s Q=I$ are equivalent. Suppose $Q^\tp J_{2n}Q=J_{2n}$, then
\[
Q^\s Q=-J_{2n}Q^\tp J_{2n}Q=-J_{2n}^2 =I.
\]
Conversely, suppose $Q^\s Q=I$, then $-J_{2n}Q^\tp J_{2n}Q = I$. So  $Q^\tp J_{2n}Q=-J_{2n}^{-1}=J_{2n}$.
\end{proof}

\subsection{Distributions on Lagrangian Grassmannians}\label{sec:circ3}

As in Section~\ref{sec:circ}, the densities of $\coe(n)$, $\cle(n)$, $\cse(n)$ are constant functions determined by their respective volume in Table~\ref{tab:vol}. Explicitly,
\[
g_{\beta,n}^\C(Q) =\begin{cases}
\frac{\gamma(1,n,2)}{\sqrt{n}\theta(1-n,n(n+1))\gamma(1,n,1)}  &\text{if }\beta=1,\\[1ex]
 \frac{\sqrt{n}\gamma(1,n,4)}{\theta(n-1,2n(n+1))\gamma(1,n,2)} &\text{if }\beta=2,\\[1ex]
\frac{\gamma(1,2n,2)}{\sqrt{2n}\theta(1,4n^2)\gamma(1,n,4)}  &\text{if }\beta=4,
\end{cases}
\]
with respect to the Riemannian volume form of $\U(n)/\O(n)$, $\Sp(n)/\U(n)$, and $\U(2n)/\Sp(n)$ respectively. To the best of our knowledge, the space $\Sp(n)/\U(n)$ made a passing appearance in \cite[p.~19]{EJ} but is otherwise not studied in the random matrix literature. For easy reference, we call it the circular Lagrangian ensemble in this article.

\begin{theorem}[Autonne--Takagi vectors distributions of circular ensembles]\label{thm:omega}
Let $n$ be a positive integer and
\begin{alignat*}{2}
\omega:\LGr(\mathbb{R}^{2n})&\to\U(n)/\O(n), \quad &X&\mapsto\lb Q\rb_{\O}, \\
\omega:\LGr(\mathbb{C}^{2n})&\to\Sp(n)/\U(n), \quad &X&\mapsto\lb Q\rb_{\U}, \\
\omega:\LGr(\mathbb{H}^{2n})&\to\U(2n)/\Sp(n), \quad &X&\mapsto\lb Q\rb_{\Sp},
\end{alignat*}
where the equivalence classes of Autonne--Takagi vectors are defined by
\begin{alignat*}{2}
\lb Q\rb_{\O} & =\lb QW\rb_{\O},\quad &  W &\in\O(n),\\
\lb Q\rb_{\U} & =\lb QW\rb_{\U},\quad &  W &\in\U(n),\\
\lb Q\rb_{\Sp} & =\lb QW\rb_{\Sp},\quad &  W &\in\Sp(n).
\end{alignat*}
Then we have the following:
    \begin{enumerate}[{label=\upshape(\roman*)}]
    \item If $X\sim\coe(n)$, then $\omega(X)\sim\unif(\U(n)/\O(n))$.
    \item If $X\sim\cle(n)$, then $\omega(X)\sim\unif(\Sp(n)/\U(n))$.
    \item If $X\sim\cse(n)$, then $\omega(X)\sim\unif(\U(2n)/\Sp(n))$.  
\end{enumerate}
\end{theorem}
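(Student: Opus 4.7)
My plan is to follow the template already established in the paper: first verify that $\omega$ is well-defined; second show that $\omega$ is equivariant with respect to the natural actions of the ambient compact group on the Lagrangian Grassmannian and on the target quotient; and third transfer the $G$-invariance from $X$ to $\omega(X)$, then invoke Proposition~\ref{prop:unif}.

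For well-definedness, suppose $X = QQ^{\bullet} = Q'(Q')^{\bullet}$ are two Autonne--Takagi decompositions, where $\bullet \in \{\tp, \la, \s\}$ depending on the case and $Q,Q'$ lie in $\U(n)$, $\Sp(n)$, or $\U(2n)$ respectively. Setting $W \coloneqq Q^{-1}Q'$ puts $Q' = QW$ and forces $WW^{\bullet} = I$. In the real case $W \in \U(n)$ together with $WW^\tp = I$ forces $W = \overline{W}$, i.e., $W \in \O(n)$. In the complex case $W \in \Sp(n)$ together with $WW^\la = I$ gives $W \in \U(n)$ by \eqref{eq:U}. In the quaternionic case $W \in \U(2n)$ together with $WW^\s = I$ gives $W \in \Sp(n)$ by Lemma~\ref{lem:sp}. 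Thus $Q$ and $Q'$ agree modulo the equivalence relation that defines the three flavors of $\lb Q\rb$, so $\omega$ is a well-defined map into the respective quotient.

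For equivariance, if $X = QQ^{\bullet}$ and $A$ lies in the ambient group $G \in \{\U(n),\Sp(n),\U(2n)\}$, then $A X A^{\bullet} = (AQ)(AQ)^{\bullet}$, whence $\omega(A X A^{\bullet}) = \lb AQ\rb = A\cdot\omega(X)$, where $G$ acts on $G/H$ by left multiplication. By \eqref{eq:circ}, each of $\coe(n)$, $\cle(n)$, $\cse(n)$ is by definition the uniform, hence $G$-invariant, distribution on the homogeneous space identified with the corresponding $\LGr$ via Theorem~\ref{thm:CLGr}, Corollary~\ref{cor:QLGr}, and the known real analogue. Therefore $X$ and $A X A^{\bullet}$ are identically distributed, and consequently so are $\omega(X)$ and $A\cdot\omega(X)$ for every $A\in G$. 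Proposition~\ref{prop:unif} then identifies the law of $\omega(X)$ as the unique $G$-invariant probability measure on $G/H$, namely $\unif(G/H)$.

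The only step that requires genuine work is the well-definedness of $\omega$, and it rests entirely on the two self-adjointness characterizations $\U(n) = \{W \in \Sp(n) : W^\la W = I\}$ in \eqref{eq:U} and $\Sp(n) = \{W \in \U(2n) : W^\s W = I\}$ in Lemma~\ref{lem:sp}, already established in Section~\ref{sec:circ2}. Everything afterwards is formal once one observes that the conjugation-type $G$-action $X \mapsto AXA^{\bullet}$ on $\LGr$ intertwines cleanly with left multiplication on $G/H$ under the factorization $X=QQ^{\bullet}$.
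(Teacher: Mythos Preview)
Your proposal is correct and follows essentially the same route as the paper's own proof: establish well-definedness of $\omega$ via the stabilizer characterizations \eqref{eq:U} and Lemma~\ref{lem:sp} (plus the elementary real case), then use equivariance $\omega(AXA^{\bullet}) = A\cdot\omega(X)$ to transfer $G$-invariance and conclude by Proposition~\ref{prop:unif}. The paper treats only the $\cse$ case in detail and declares the other two analogous, whereas you spell out all three in parallel, but the argument is the same.
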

\begin{proof}
We begin with the $\cse(n)$ case. Using the characterization in Corollary~\ref{cor:QLGr}, we let $QQ^\s=ZZ^\s$ be two Autonne--Takagi decompositions of $X \in \LGr(\mathbb{H}^{2n})$. Then as $(Z^\h Q)(Z^\h Q)^\s=I$, we have that $Z^\h Q\in\Sp(n)$ by Lemma~\ref{lem:sp}. Hence $\omega(X)$ is well-defined. Now let $X\sim\cse(n)$. The density of $\cse(n)$, being constant, is trivially invariant under any group action. So $X$ and $AXA^\s$ are identically distributed for any $A\in\U(2n)$. Hence so are $\omega(X)$ and $\omega(AXA^\s)$. Let $X = QQ^\s$ be an Autonne--Takagi decomposition, noting that $Q$ is now a random matrix. Then
\[
\omega(AXA^\s) =\omega\bigl(AQ(AQ)^\s\bigr) = \lb AQ\rb_{\Sp} =  A \lb Q\rb_{\Sp} = A\omega(X).
\]
So $\omega(X)$ and $A\omega(X)$ are identically distributed. Since this holds for all $A \in \U(2n)$, it follows from Proposition~\ref{prop:unif} that $\omega(X)\sim\unif(\U(2n)/\Sp(n))$.

The $\cle(n)$ case is similar, with  Corollary~\ref{thm:CLGr} taking the place of Corollary~\ref{cor:QLGr} and  \eqref{eq:U} taking the place of Lemma~\ref{lem:sp}, together with corresponding replacement of $X^\s$ by $X^\la$ and $\U(2n)/\Sp(n)$ by $\Sp(n)/\U(n)$. The $\coe(n)$ case is also similar.
\end{proof}

We conclude with a brief mention of the relations between the circular ensembles in this section with those in Section~\ref{sec:circ}. 
\begin{proposition}\label{prop:rel}
Let $n$ be a positive integer. Then we have the following:
\begin{enumerate}[{label=\upshape(\roman*)}]
\item If $Q\sim\cue(n)$, then $Q Q^\tp \sim\coe(n)$.
\item\label{it:cle} If $Q\sim\cqe(n)$, then $Q Q^\la \sim\cle(n)$.
\item If $Q\sim\cue(2n)$, then $Q Q^\s \sim\cse(n)$.
\end{enumerate}
\end{proposition}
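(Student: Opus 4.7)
The plan is to handle all three parts by a single strategy: for the appropriate pair $(G,H,*)$, namely $(\U(n),\O(n),\tp)$, $(\Sp(n),\U(n),\la)$, and $(\U(2n),\Sp(n),\s)$, view $Q \mapsto QQ^*$ as the composition of the quotient map $G \to G/H$ with the $G$-equivariant diffeomorphism $G/H \cong \{QQ^* : Q \in G\}$ supplied by Theorem~\ref{thm:CLGr} and Corollary~\ref{cor:QLGr} (for (i), the matching characterization of $\LGr(\mathbb{R}^{2n})$ cited on p.~929--930 of \cite{Lag1} and repeated in Table~\ref{tab:rep}). Once we check that this map is indeed well-defined on the quotient and is equivariant in the correct sense, Proposition~\ref{prop:unif} closes the argument.

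The first step is well-definedness: the map $Q \mapsto QQ^*$ should be constant on each coset $QH$. For (i) this is immediate because $W \in \O(n)$ satisfies $WW^\tp = I$, so $(QW)(QW)^\tp = QQ^\tp$. For (ii) the key is equation~\eqref{eq:U}: if $W \in \U(n) \subseteq \Sp(n)$, then $W^\la W = I$, and since $W$ is invertible we also get $WW^\la = I$, so $(QW)(QW)^\la = Q(WW^\la)Q^\la = QQ^\la$. For (iii), Lemma~\ref{lem:sp} similarly gives $WW^\s = I$ for $W \in \Sp(n) \subseteq \U(2n)$, so $(QW)(QW)^\s = QQ^\s$. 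Thus in each case $Q \mapsto QQ^*$ descends to a map from $G/H$ to the appropriate matrix submanifold, and by the cited diffeomorphisms this descended map is the inverse of the map $\omega$ from Theorem~\ref{thm:omega}.

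Next I would verify $G$-equivariance. The target $\{QQ^*\}$ carries the $G$-action $M \mapsto AMA^*$, and the claim reduces to the identity $(AQ)^* = Q^* A^*$. For $*=\tp$ this is standard. For $*=\la$, using \eqref{eq:sa} and $I_{n,n}^2=I$, one computes $(AQ)^\la = I_{n,n}Q^\h A^\h I_{n,n} = (I_{n,n}Q^\h I_{n,n})(I_{n,n}A^\h I_{n,n}) = Q^\la A^\la$. For $*=\s$, using $J_{2n}^2 = -I$, we get $(AQ)^\s = -J_{2n}Q^\tp A^\tp J_{2n}$, and expanding $Q^\s A^\s = (-J_{2n}Q^\tp J_{2n})(-J_{2n}A^\tp J_{2n}) = -J_{2n}Q^\tp A^\tp J_{2n}$ matches. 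Hence $(AQ)(AQ)^* = A(QQ^*)A^*$ in all three cases.

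Finally, because Haar measure is $G$-invariant, $Q \sim \unif(G)$ implies $AQ \sim \unif(G)$ for every $A \in G$, so $A(QQ^*)A^*$ is identically distributed with $QQ^*$. This exhibits the law of $QQ^*$ as a $G$-invariant probability measure on the homogeneous space $G/H \cong \{QQ^*\}$, and by the uniqueness statement in Proposition~\ref{prop:unif} it must be $\unif(G/H)$, which by \eqref{eq:circ} is $\coe(n)$, $\cle(n)$, or $\cse(n)$ in the respective cases. The only step requiring any care is the pair of small computations establishing $(AQ)^\la = Q^\la A^\la$ and $(AQ)^\s = Q^\s A^\s$; beyond these, the argument is a direct application of the matrix submanifold descriptions already proved and of Proposition~\ref{prop:unif}.
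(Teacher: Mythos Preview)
Your proof is correct and follows essentially the same route as the paper: show that the law of $QQ^*$ is $G$-invariant via $(AQ)(AQ)^* = A(QQ^*)A^*$ together with left-invariance of Haar measure, then invoke Proposition~\ref{prop:unif}. The paper only writes out case~\ref{it:cle} and omits your verifications of well-definedness on cosets and of the anti-multiplicativity $(AQ)^\la = Q^\la A^\la$, $(AQ)^\s = Q^\s A^\s$, but these are harmless extras rather than a different approach.
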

\begin{proof}
We will just prove \ref{it:cle} as the other two cases are well-known \cite[Section~1]{Lag3}. The group $\Sp(n)$ acts on the submanifold characterization of $\LGr(\mathbb{C}^{2n})$ in \eqref{eq:sub} via $(A,QQ^\la)\mapsto AQQ^\la A^\la $ for any $A\in\Sp(n)$ and $QQ^\la \in \LGr(\mathbb{C}^{2n})$. Let $Q\sim\cqe(n)$. Then $AQ$ and $Q$ are identically distributed for any $A\in\Sp(n)$. Thus $AQQ^\la A^\la$ and $QQ^\la$ are also identically distributed on $\LGr(\mathbb{C}^{2n})$. Since this holds for all $A\in\Sp(n)$, we have $QQ^\la\sim\cle(n)$ by Proposition~\ref{prop:unif}.
\end{proof}

\section{Conclusion}

The map that takes a matrix to its matrix of eigen, singular, cosine-sine, or Autonne--Takagi vectors is often a $G$-equivariant map between well-known spaces.  By representing all spaces as matrix submanifolds, we may view such a map as taking random matrices to random matrices, mapping well-known random matrix ensembles to the $G$-invariant distributions on manifolds of geometric importance.

Another observation is that the distributions in the last column of Table~\ref{tab:sum} are all uniform distributions. This leads us to the interpretation that the matrix decompositions in Table~\ref{tab:decomp} resolve the randomness in these random matrix ensembles into two parts: a uniform part captured by the eigen, singular, cosine-sine, or Autonne--Takagi \emph{vectors}; and a non-uniform part captured by the eigen, singular, cosine-sine, or Autonne--Takagi \emph{values}.

The maps in this article are all univariate maps. For example,  $\varphi_{1,\dots,n-1}$ takes one random matrix $X \sim \goe(n)$ to another random matrix $\varphi_{1,2,\dots,n-1}(X)\sim\unif\bigl(\Flag(\mathbb{R}^n)\bigr)$. We might ask about the multivariate analogue, in the spirit of free probability. We leave this to future work for interested readers.

\subsection*{Acknowledgment} We would like to thank Thomas Rongbiao Wang for very helpful discussions regarding the Lagrangian Grassmannians.

\bibliographystyle{abbrv}

\begin{thebibliography}{10}

\bibitem{Lag1}
P.~Albers, U.~Frauenfelder, and J.~P. Solomon.
\newblock A {$\Gamma$}-structure on {L}agrangian {G}rassmannians.
\newblock {\em Comment. Math. Helv.}, 89(4):929--936, 2014.

\bibitem{Arnold}
V.~I. Arnold.
\newblock The complex {L}agrangian {G}rassmannian.
\newblock {\em Funktsional. Anal. i Prilozhen.}, 34(3):63--65, 2000.

\bibitem{Cir}
C.~W.~J. Beenakker.
\newblock Random-matrix theory of {M}ajorana fermions and topological
  superconductors.
\newblock {\em Rev. Mod. Phys.}, 87(3):1037--1066, 2015.

\bibitem{Volume1}
L.~J. Boya, E.~C.~G. Sudarshan, and T.~Tilma.
\newblock Volumes of compact manifolds.
\newblock {\em Rep. Math. Phys.}, 52(3):401--422, 2003.

\bibitem{Lag2}
K.~Calvert, K.~Nishiyama, and P.~Pand\v{z}i\'c.
\newblock Clifford algebras, symmetric spaces and cohomology rings of
  grassmannians.
\newblock {\em arXiv e-prints}, 2023.

\bibitem{Dyson}
F.~J. Dyson.
\newblock Correlations between eigenvalues of a random matrix.
\newblock {\em Comm. Math. Phys.}, 19:235--250, 1970.

\bibitem{EJ}
A.~Edelman and S.~Jeong.
\newblock On the {C}artan decomposition for classical random matrix ensembles.
\newblock {\em J. Math. Phys.}, 63(6):Paper No. 061705, 24, 2022.

\bibitem{Hermite}
A.~Edelman and B.~D. Sutton.
\newblock From random matrices to stochastic operators.
\newblock {\em J. Stat. Phys.}, 127(6):1121--1165, 2007.

\bibitem{inaccurate2}
L.~Erd\H{o}s and B.~McKenna.
\newblock Extremal statistics of quadratic forms of {GOE}/{GUE} eigenvectors.
\newblock {\em Ann. Appl. Probab.}, 34(1B):1623--1662, 2024.

\bibitem{Folland1}
G.~B. Folland.
\newblock {\em A Course in Abstract Harmonic Analysis}.
\newblock Studies in Advanced Mathematics. CRC Press, Boca Raton, FL, 1995.

\bibitem{simpspec}
P.~J. Forrester.
\newblock {\em Log-gases and random matrices}, volume~34 of {\em London
  Mathematical Society Monographs Series}.
\newblock Princeton University Press, Princeton, NJ, 2010.

\bibitem{GV}
G.~H. Golub and C.~F. Van~Loan.
\newblock {\em Matrix computations}.
\newblock Johns Hopkins Studies in the Mathematical Sciences. Johns Hopkins
  University Press, Baltimore, MD, third edition, 1996.

\bibitem{inaccurate1}
V.~Gorin and S.~Sodin.
\newblock The {KPZ} equation and moments of random matrices.
\newblock {\em J. Math. Phys. Anal. Geom.}, 14(3):286--296, 2018.

\bibitem{Helgason01}
S.~Helgason.
\newblock {\em Differential geometry, {L}ie groups, and symmetric spaces},
  volume~34 of {\em Graduate Studies in Mathematics}.
\newblock American Mathematical Society, Providence, RI, 2001.

\bibitem{HM94}
U.~Helmke and J.~B. Moore.
\newblock {\em Optimization and dynamical systems}.
\newblock Communications and Control Engineering Series. Springer-Verlag
  London, Ltd., London, 1994.
\newblock With a foreword by R. Brockett.

\bibitem{HJ}
R.~A. Horn and C.~R. Johnson.
\newblock {\em Matrix analysis}.
\newblock Cambridge University Press, Cambridge, second edition, 2013.

\bibitem{QAT}
R.~A. Horn and F.~Zhang.
\newblock A generalization of the complex {A}utonne--{T}akagi factorization to
  quaternion matrices.
\newblock {\em Linear Multilinear Algebra}, 60(11--12):1239--1244, 2012.

\bibitem{HL}
Y.~Huang and N.~C. Leung.
\newblock A uniform description of compact symmetric spaces as {G}rassmannians
  using the magic square.
\newblock {\em Math. Ann.}, 350(1):79--106, 2011.

\bibitem{univpropvec2}
A.~Knowles and J.~Yin.
\newblock Eigenvector distribution of {W}igner matrices.
\newblock {\em Probab. Theory Relat. Fields}, 155(3--4):543--582, 2013.

\bibitem{LK24a}
L.-H. Lim and K.~Ye.
\newblock Simple matrix models for the flag, {G}rassmann, and {S}tiefel
  manifolds.
\newblock {\em arXiv:2407.13482}, 2024.

\bibitem{Cartan}
E.~Mac\'{\i}as-Virg\'os.
\newblock The {C}ayley transform on {L}ie groups, symmetric spaces and
  {S}tiefel manifolds.
\newblock {\em Rev. Roumaine Math. Pures Appl.}, 63(2):143--160, 2018.

\bibitem{Wishart}
R.~J. Muirhead.
\newblock {\em Aspects of multivariate statistical theory}.
\newblock Wiley Series in Probability and Mathematical Statistics. John Wiley
  \& Sons, Inc., New York, 1982.

\bibitem{Lag3}
M.~Novaes.
\newblock Gaussian diagrammatics from circular ensembles of random matrices.
\newblock {\em J. Phys. A}, 57(8):085004, 2024.

\bibitem{accurate1}
S.~O'Rourke, V.~Vu, and K.~Wang.
\newblock Eigenvectors of random matrices: a survey.
\newblock {\em J. Combin. Theory Ser. A}, 144:361--442, 2016.

\bibitem{GinibreQR}
G.~W. Stewart.
\newblock The efficient generation of random orthogonal matrices with an
  application to condition estimators.
\newblock {\em SIAM J. Numer. Anal.}, 17(3):403--409, 1980.

\bibitem{univpropvec}
T.~Tao and V.~Vu.
\newblock Random matrices: universal properties of eigenvectors.
\newblock {\em Random Matrices Theory Appl.}, 1(1):1150001, 27, 2012.

\bibitem{QCS}
M.~Wei, Y.~Li, F.~Zhang, and J.~Zhao.
\newblock {\em Quaternion Matrix Computations}.
\newblock Mathematics Research Developments. Nova Science Publishers, Inc.,
  Hauppauge, NY, 2018.

\bibitem{Youla}
D.~C. Youla.
\newblock A normal form for a matrix under the unitary congruence group.
\newblock {\em Canad. J. Math.}, 13:694--704, 1961.

\end{thebibliography}

\appendix

\section{Alternative definitions of the ensembles}\label{app:alt}

Let $\nor_{\mathbb{R}}(0,\sigma^2)$, $\nor_{\mathbb{C}}(0,\sigma^2)$, and $\nor_{\mathbb{H}}(0,\sigma^2)$ denote the real, complex, and quaternionic normal distributions respectively, i.e., with densities
\[
f_{\beta,\sigma^2}(y)
=
\begin{cases}
\frac{1}{\sqrt{2\pi\sigma^2}}
  e^{-y^2/2\sigma^2}
  & \text{if }\beta=1,\; y\in\mathbb{R},\\[1.5ex]
\frac{1}{\pi\sigma^2}
  e^{-|y|^2/\sigma^2}
  & \text{if }\beta=2,\; y\in\mathbb{C},\\[1.5ex]
\frac{4}{\pi^2\sigma^4}
 e^{-2|y|^2/\sigma^2}
  & \text{if }\beta=4,\; y\in\mathbb{H},
\end{cases}
\]
respectively. One way to view Gaussian, Jacobi, Laguerre, and Gini ensembles is that they are random matrices that can be generated from the univariate normal distribution:
\begin{enumerate}[{label=\upshape(\roman*)}]
\item $X\sim \goe(n)$ iff $X$ is a real symmetric $n \times n$ random matrix with independent entries
\[
x_{ii}\sim \nor_{\mathbb{R}}(0,1),\quad1\le i\le n;\qquad
x_{ij}\sim \nor_{\mathbb{R}}\Bigl(0,\frac{1}{2}\Bigr), \quad 1\le i<j\le n.
\]

\item $X\sim \gue(n)$ iff $X$ is a complex Hermitian $n \times n$  random matrix with independent entries
\[
x_{ii}\sim \nor_{\mathbb{R}}(0,1),\quad 1\le i\le n;\qquad
x_{ij}\sim \nor_{\mathbb{C}}(0,1), \quad1\le i<j\le n.
\]

\item $X\sim \gse(n)$ iff $X$ is a quaternionic self-dual $n \times n$  random matrix with independent entries
\[
x_{ii}\sim \nor_{\mathbb{R}}(0,1), \quad1\le i\le n;\qquad
x_{ij}\sim \nor_{\mathbb{H}}(0,2),\quad 1\le i<j\le n.
\]

\item $Y\sim\gio(m,n)$ iff $Y$ is a real $m \times n$ random matrix with independent entries
\[
y_{ij}\sim \nor_{\mathbb{R}}(0,1), \quad 1 \le i \le m,\; 1 \le j \le n.
\]

\item $Y\sim\giu(m,n)$ iff $Y$ is a complex $m \times n$ random matrix with independent entries
\[
y_{ij}\sim \nor_{\mathbb{C}}(0,1), \quad 1 \le i \le m,\; 1 \le j \le n.
\]

\item $Y\sim\gis(m,n)$ iff $Y$ is a quaternionic $m \times n$ random matrix with independent entries
\[
y_{ij}\sim \nor_{\mathbb{H}}(0,1), \quad 1 \le i \le m,\; 1 \le j \le n.
\]
\end{enumerate}
The Laguerre ensembles may be defined in terms of Ginibre ensembles. Let $m \ge n$. Then
\begin{enumerate}[resume*]
\item $H\sim\loe(m,n)$ iff $H$ is a real symmetric $n \times n$ random matrix given by
\[
H=Y^\tp Y
\]
with  $Y \sim\gio(m,n)$.

\item $H\sim\lue(m,n)$ iff $H$ is a complex Hermitian $n \times n$ random matrix given by
\[
H=Y^\h Y
\]
with $Y\sim\giu(m,n)$.

\item $H\sim\lse(m,n)$ iff $H$ is a quaternionic self-dual $n \times n$ random matrix given by
\[
H=Y^\h Y
\]
with $Y\sim\gis(m,n)$.
\end{enumerate}
The Jacobi ensembles may be defined in terms of Ginibre ensembles. Let $l \ge n$ and $m \ge n$. Then
\begin{enumerate}[resume*]
\item $H\sim\joe(l,m,n)$ iff $H$ is a real symmetric $n \times n$ random matrix given by
\[
H=(Y^\tp Y+Z^\tp Z)^{-1/2}Z^\tp Z(Y^\tp Y+Z^\tp Z)^{-1/2}
\]
with independent $Y\sim\gio(m,n)$ and $Z\sim\gio(l,n)$.

\item $H\sim\jue(l,m,n)$ iff $H$ is a complex Hermitian $n \times n$ random matrix given by
\[
H=(Y^\h Y+Z^\h Z)^{-1/2}Z^\h Z(Y^\h Y+Z^\h Z)^{-1/2}
\]
with independent $Y\sim\giu(m,n)$ and $Z\sim\giu(l,n)$.

\item $H\sim\jse(l,m,n)$ iff $H$ is a quaternionic self-dual $n \times n$ random matrix given by
\[
H=(Y^\h Y+Z^\h Z)^{-1/2}Z^\h Z(Y^\h Y+Z^\h Z)^{-1/2}
\]
with independent $Y\sim\gis(m,n)$ and $Z\sim\gis(l,n)$.
\end{enumerate}
While the circular real, complex, and quaternionic ensembles do not have entrywise descriptions in terms of the univariate normal distribution, it follows from Proposition~\ref{prop:rel} that the circular orthogonal, Lagrangian, and symplectic ensembles may be defined in terms of the circular unitary and quaternionic ensembles.
\begin{enumerate}[resume*]
\item $Z\sim\coe(n)$ iff $Z$ is a complex symmetric unitary $n\times n$ random matrix given by
\[
Z=Q Q^\tp
\]
with $Q\sim\cue(n)$.
\item $Z\sim\cle(n)$ iff $Z$ is a complex Lagrangian symmetric compact symplectic $2n\times 2n$ random matrix given by
\[
Z=Q Q^\la
\]
with $Q\sim\cqe(n)$.
\item $Z\sim\cse(n)$ iff $Z$ is a complex symplectically symmetric unitary $2n\times 2n$ random matrix given by
\[
Z=Q Q^\s
\]
with $Q\sim\cue(2n)$.
\end{enumerate}

\end{document}